\DeclarePairedDelimiter\ceil{\lceil}{\rceil}
 \definecolor{dgreen}{rgb}{0,0.5,0}
\theoremstyle{plain}
\newtheorem{theor10}{Theorem}
\newenvironment{theor1}
  {\pushQED{\qed}\begin{theor10}}
  {\popQED\end{theor10}}
\newtheorem{prop10}{Proposition}
\newtheorem{cor10}{Corollary}
\newenvironment{cor1}
  {\pushQED{\qed}\begin{cor10}}
  {\popQED\end{cor10}}
\newtheorem{lem0}{Lemma}[section]
\newenvironment{lem}
  {\pushQED{\qed}\begin{lem0}}
  {\popQED\end{lem0}}
\theoremstyle{plain}
\newtheorem{theor0}[lem0]{Theorem}
\newtheorem{prop0}[lem0]{Proposition}
\newenvironment{prop}
  {\pushQED{\qed}\begin{prop0}}
  {\popQED\end{prop0}}
\newtheorem{cor0}[lem0]{Corollary}
\newenvironment{cor}
  {\pushQED{\qed}\begin{cor0}}
  {\popQED\end{cor0}}
\theoremstyle{definition}
\newtheorem{defin0}[lem0]{Definition}
\newenvironment{defin}
  {\pushQED{\qed}\begin{defin0}}
  {\popQED\end{defin0}}
\newtheorem{rems0}[lem0]{Remarks}
\newenvironment{rems}
  {\pushQED{\qed}\begin{rems0}}
  {\popQED\end{rems0}}
\newtheorem{rem0}[lem0]{Remark}
\newenvironment{rem}
  {\pushQED{\qed}\begin{rem0}}
  {\popQED\end{rem0}}
\newtheorem{ex0}[lem0]{Example}
\newtheorem{exs0}[lem0]{Examples}
\newenvironment{exs}
  {\pushQED{\qed}\begin{exs0}}
  {\popQED\end{exs0}}
  \newtheorem{notation0}[lem0]{Notation}
\numberwithin{equation}{section}
\mathchardef\emptyset="001F
\newcommand{\cprime}{$'$}
\newcommand{\Aa}{a}
\newcommand{\N}{\mathbb N}
\newcommand{\e}{\varepsilon}
\newcommand{\R}{\mathbb R}
\newcommand{\Z}{\mathbb Z}
\newcommand{\C}{\mathbb C}
\newcommand{\T}{\mathbb T}
\newcommand{\Kc}{\mathcal K}
\newcommand{\Vc}{\mathcal V}
\newcommand{\Oc}{\mathcal O}
\newcommand{\df}{\mathfrak d}
\newcommand{\Tc}{\mathcal T}
\newcommand{\F}{\mathcal F}
\newcommand{\Lc}{\mathcal L}
\newcommand{\Rc}{\mathcal R}
\newcommand{\Ic}{\mathcal I}
\newcommand{\sinc}{\operatorname{sinc}}
\newcommand{\Sc}{\mathcal S}
\newcommand{\Id}{\operatorname{Id}}
\newcommand{\p}{\mathbb{P}}
\newcommand{\loc}{{\operatorname{loc}}}
\newcommand{\Ld}{\operatorname{L}}
\newcommand{\supp}{\operatorname{supp}}
\newcommand{\step}[1]{\noindent \textit{Step} #1.}
\newcommand{\pr}[1]{\mathbb{P}\left[{#1}\right]}
\newcommand{\expec}[1]{\mathbb{E}\left[{#1}\right]}
\newcommand{\expecm}[1]{\mathbb{E}\big[{#1}\big]}
\newcommand{\vertiii}[1]{{\left\vert\kern-0.25ex\left\vert\kern-0.25ex\left\vert #1 
    \right\vert\kern-0.25ex\right\vert\kern-0.25ex\right\vert}}
\title[Approximate Floquet-Bloch theory for linear waves]{Approximate normal forms via Floquet-Bloch theory: \\ Nehoro\v{s}ev stability for linear waves\\in quasiperiodic media}
\author[M. Duerinckx]{Mitia Duerinckx}
\author[A. Gloria]{Antoine Gloria}
\author[C. Shirley]{Christopher Shirley}
\address[Mitia Duerinckx]{Universit\'e Paris-Saclay, CNRS, Laboratoire de Math\'ematiques d'Orsay, 91400~Orsay, France \& Universit\'e Libre de Bruxelles, D\'epartement de Math\'ematique, 1050~Brussels, Belgium}
\email{mitia.duerinckx@u-psud.fr}
\address[Antoine Gloria]{Sorbonne Universit\'e, CNRS, Laboratoire Jacques-Louis Lions, 75005~Paris, France \& Institut Universitaire de France (IUF) \& Universit\'e Libre de Bruxelles, D\'epartement de Math\'ematique, 1050~Brussels, Belgium}
\email{gloria@ljll.math.upmc.fr}
\address[Christopher Shirley]{Universit\'e Paris-Saclay, CNRS, Laboratoire de Math\'ematiques d'Orsay, 91400~Orsay, France}
\email{christopher.shirley@universite-paris-saclay.fr}
\begin{document}
\selectlanguage{english}
\maketitle

\begin{abstract}
We study the long-time behavior of the Schrödinger flow in a heterogeneous potential $\lambda V$ with small intensity $0<\lambda\ll 1$ (or alternatively at high frequencies).
The main new ingredient, which we introduce in the general setting of a stationary ergodic potential,
is an approximate
stationary Floquet-Bloch theory that is used to put the perturbed Schr\"odinger operator into approximate normal form. We apply this approach to quasiperiodic potentials and establish a Nehoro\v{s}ev-type stability result. In particular, this ensures asymptotic ballistic transport up to a stretched exponential timescale $\exp(\lambda^{-\frac1{s}})$ for some $s>0$. More precisely, the approximate normal form leads to an accurate long-time description of the Schrödinger flow as an effective unitary correction of the free flow.
The approach is robust and generically applies to linear waves.
For classical waves, for instance, this allows to extend diffractive geometric optics to quasiperiodically perturbed media.
\end{abstract}

\setcounter{tocdepth}{1}
\tableofcontents

\section{Introduction and statement of the main results}\label{chap:intro}

\subsection{General overview}
{ 
Consider the perturbed Schrödinger operator $\Lc_\lambda:=\Lc_0+ \lambda V$ on $\Ld^2(\R^d)$, where $\Lc_0:=-\triangle$ is the Laplacian on the ambient space $\R^d$, where $V:\R^d\to\R$ denotes a quasiperiodic potential (cf.~(QP) in Section~\ref{sec:res-quant}), and where the coupling constant $\lambda\ge0$ measures its intensity, and
let $u_\lambda$ denote the corresponding Schrödinger flow
\[i\partial_tu_\lambda=\Lc_\lambda u_\lambda,\qquad u_\lambda|_{t=0}=u^\circ_\lambda\]
with initial condition $u^\circ_\lambda$.
This well-travelled model describes the motion of an electron in a quasiperiodic scenery.
We are interested in the dynamical properties of the flow on long timescales.

\smallskip

A first approach to such questions focuses on the spectrum of $\Lc_\lambda$.
In case of a periodic potential $V$, a complete answer follows from the classical Floquet-Bloch theory (e.g.~\cite{Reed-Simon-78,Kuchment-16}): the operator has absolutely continuous spectrum and the flow satisfies ballistic transport on all times for all values of $\lambda$ (cf.~\cite{Asch-Knauf}), that is, for all $t>0$
$$
\int_{\R^d} (\frac{|x|}{t})^2 |u^t_\lambda(x)|^2dx \,\gtrsim \int_{\R^d} |x|^2|u^\circ_\lambda(x)|^2dx.
$$
The theory relies on the fact that periodic Schrödinger operators have compact resolvent on the torus (via the Rellich theorem). 
For a quasiperiodic potential as considered here, the story is radically different: quasiperiodic Schrödinger operators are degenerate elliptic operators on the corresponding higher-dimensional torus and compactness fails.
This is not only a technical matter: the spectrum of such operators indeed drastically differs from their periodic counterparts since the quasiperiodic potential may lead to localization (that is, point spectrum).
The situation is particularly well understood in dimension $d=1$ (under suitable assumptions): in the large disorder regime $\lambda\gg1$ the spectrum is pure point with dynamical localization at low frequencies~\cite{FSW-90} and is absolutely continuous with ballistic transport at high frequencies~\cite{E-92}, while in the small disorder regime $\lambda \ll 1$ the spectrum is solely absolutely continuous~\cite{E-92,Zhao-16,Zhao-17}.
In higher dimension $d>1$, although the situation is expected to be similar, the available results are much sparser
--- except in the setting of discrete Schrödinger operators on $\Z^d$~\cite{BGS-02,B-05,B-07,Krueger}.
In the continuum setting, 
the whole question remains open.
Recently, Karpeshina and coauthors~\cite{Karpeshina-Lee-10,Karpeshina-Shterenberg-14} proved the existence of a semi-axis of absolutely continuous spectrum at high frequencies in dimension $d=2$ for a specific class of quasiperiodic potentials (more precisely, with the notation of (QP) below: $M=4$ and $\mathcal F\tilde V$ compactly supported), which was supplemented with a corresponding ballistic transport result~\cite{Karpeshina-Lee-Shterenberg-Stolz-15}.
In contrast to the known situation in dimension $d=1$, this result does however not rule out the presence of point spectrum, nor of other types of transport: the authors identify a subspace of initial data for which ballistic transport holds for all times; the picture remains largely incomplete.
These non-perturbative spectral questions are  difficult, and rigorous approaches are based on argument from multi-scale analysis.

\smallskip

In the present contribution, we take a different path and rather investigate the question of long-time behavior of linear waves in its own right,
regardless of spectral aspects. This perspective turns out to be more flexible (one may indeed look for a statement valid on long but not infinitely long timescales in some perturbative regime)
and robust --- of course,  it only gives partial answers to  the above conjectures.
More precisely, based on a perturbative use of formal Rayleigh-Schr\"odinger series, we introduce a general method in form of an approximate Floquet-Bloch theory, which allows to put the perturbed Schrödinger operator $\Lc_\lambda$ into an approximate normal form.
This leads to a perturbative ballistic transport result up to some stretched exponential timescale both in the small disorder regime or at high frequencies.
Roughly speaking, by a scaling argument, treating a small potential $\lambda V$ with $\lambda\ll1$ and any fixed initial data is expected to be equivalent to treating a fixed potential $V$ and initial data localized at high frequencies $\frac1\lambda\gg1$ (in the same spirit as Aubry duality for quasi-Mathieu operators, e.g.~\cite[Section~6.2]{Damanik-17}).
Our techniques allow to treat both regimes at once (up to minor modifications) and we obtain ballistic transport up to a stretched exponential timescale $\exp(\lambda^{-s})$ for some exponent $s>0$ (depending on the regularity and the quasiperiodic structure of the potential~$V$).
Whereas ballistic transport up to arbitrary algebraic timescale $\lambda^{-n}$ would only require a relatively soft analysis, there is a major gap to reach stretched exponential timescales: a finer analysis of the formal Rayleigh-Schr\"odinger series is needed.

\smallskip

Let us emphasize that this perturbative approach is robust and can deal with a large variety of (regular and Diophantine) quasiperiodic potentials in any dimension $d\ge1$. For general potentials and/or dimension $d\ge3$, this provides the sharpest results available on the long-time behavior of quantum waves in quasiperiodic scenery. Note that in the (very little understood) setting when the potential is not regular or not Diophantine, this approach still applies, and yields ballistic transport up to some algebraic timescale (limitation due to the worse behavior of perturbation series).
In addition, these techniques
are easy to adapt to linear wave equations in general, as we illustrate by treating both quantum and classical waves (classical wave equations with higher-order elliptic operators or other types of linear waves like Maxwell's equations can be treated similarly). For classical waves, this leads us to revisit diffractive geometric optics in quasiperiodically perturbed media~\cite{APR1,APR2}. The approach extends to the discrete setting mutadis mutandis

\smallskip

The rest of this introduction is organized as follows:
In Section~\ref{sec:res-quant} we state our main result on quantum waves in form of an approximate normal form decomposition up to a stretched exponential timescale (Theorem~\ref{th:main-quasi}), while ballistic transport follows as a corollary
(Corollary~\ref{th:main-quasi-tsp}).
To keep this article short we mainly focus on the small disorder regime; the corresponding ballistic transport result at high frequencies is briefly displayed as Corollary~\ref{th:main-quasi-tsp-2}.
In Section~\ref{sec:res-class}, we turn to the case of classical waves (Theorem~\ref{thm:class-wave-qp}).
In that setting, we further specialize the result to initial data that are peaked in frequency, which leads to an effective PDE for classical waves in quasiperiodic disorder in form of a diffractive correction to geometric optics (Corollary~\ref{cor:class-wave-qp}). This extends previous periodic results~\cite{APR1,APR2} to the quasiperiodic setting and is of independent interest.
A general presentation of the method is postponed to Section~\ref{sec:method}.
}

\subsubsection*{Notation}
\begin{enumerate}[$\bullet$]
\item Throughout the article, $d$ denotes the space dimension, we denote by $C\ge1$ any constant that only depends on the dimension and on controlled quantities, the value of which may change from line to line. We use the notation $\lesssim$ (resp.~$\gtrsim$) for $\le C\times$ (resp.~$\ge\frac1C\times$) up to such a multiplicative constant $C$. We write $\simeq$ when both $\lesssim$ and $\gtrsim$ hold. We also use the notation $a=O(b)$ for $a\lesssim b$. We write $\ll$ for $\le\frac1C\times$ for some large enough $C$.
We add subscripts to $C,\lesssim,\gtrsim,\simeq,O(\cdot),\ll$ in order to indicate dependence on other parameters. The notation $a_\lambda=o(b_\lambda)$ stands for $a_\lambda/b_\lambda\to0$ as $\lambda\downarrow0$.
\item We denote by $\hat f(k):=\F[f](k):=\int_{\R^d} e^{-ik\cdot x}f(x)\,dx$ the usual Fourier transform of a smooth function $f$ on $\R^d$. The inverse Fourier transform is then given by
$f(x)=\F^{-1}[\hat f\,](x)=\int_{\R^d} e^{ik\cdot x}\hat f(k)d^*k$
in terms of the rescaled Lebesgue measure $d^*k:=(2\pi)^{-d}dk$.
Likewise, for $M\in \N$, when dealing with periodic functions on the torus $\T^M=[0,2\pi)^M$, we denote by $\hat f(k):=\F[f](k):=\int_{\T^M} e^{-ik\cdot x}f(x)dx$ the associated Fourier coefficients on $\Z^M$.
\item The ball centered at $x$ and of radius $r$ in dimension $n$ is denoted by $B^n(x,r)$, by $B^n_r$ if $x=0$, and by $B^n(x)$ if $r=1$. We drop the superscript $n$ whenever $n=d$.
\item $\Sc(\R^d)$ denotes the Schwartz class (we always implicitly consider complex-valued maps when discussing the Schrödinger equation).
$\N$ denotes the set of natural numbers (including $0$). For all $m\in \N$ and $b\in \N^m$, we set $|b|=\sum_i b_i$.
We write $\langle x\rangle:=(1+|x|^2)^{1/2}$.
\item LHS and RHS stand for ``left-hand side'' and ``right-hand side'', respectively.
\end{enumerate}

\subsection{Quantum waves in a quasiperiodic potential}\label{sec:res-quant}

We consider a quasiperiodic potential $V$ satisfying the following standard smoothness and Diophantine properties.
\begin{enumerate}[\quad(H)]
\item[(QP)] \emph{Quasiperiodic setting:}
$$V(x):=\tilde V(F^Tx),$$
for $M\ge d$, some (winding) matrix $F\in\R^{d\times M}$ (the transpose of which is denoted by~$F^T$), and some lifted map $\tilde V\in C(\T^M)$
on the $M$-dimension torus $\T=[0,2\pi)^d$.
Assume that the winding matrix $F\in\R^{d\times M}$ satisfies a Diophantine condition, that is, for some $r_0>0$,
\begin{align}\label{eq:cond-Dioph-Th1}
|F\xi|\ge\frac1C|\xi|^{-r_0}\qquad\text{for all $\xi\in\Z^M\setminus\{0\}$},
\end{align}
and that the lifted map $\tilde V$ is Gevrey-regular, that is, for some $\alpha>0$, 
\begin{align}\label{eq:cond-Gevrey-Th1}
\|\mathds{1}_{|\cdot|> K}\F \tilde V \|_{\Ld^1} \le \exp(-K^\alpha)\qquad\text{for all $K\ge0$}.
\end{align}
\end{enumerate}

In this setting, our main result shows that for all smooth initial data the Schr\"odinger flow in the small disorder regime remains close to an effective unitary correction of the free flow up to some stretched exponential timescale. The general strategy of the proof is described in Section~\ref{sec:strat} in the perspective of normal forms.

\begin{theor1}[Nehoro\v{s}ev-type stability in the small disorder regime]\label{th:main-quasi}
Consider the quasiperiodic setting~\emph{(QP)}, let $u^\circ\in\Sc(\R^d)$, and consider the Schrödinger flow
\begin{align}\label{eq:schr-V}
i\partial_tu_\lambda=(-\triangle+\lambda V) u_\lambda,\qquad u_\lambda|_{t=0}=u^\circ.
\end{align}
For all $0<\lambda\ll1$, it satisfies for any $\gamma<\frac1{2d+1}$,
\begin{gather}\label{eq:concl-1(i)}
\sup_{0\le\lambda'\le\lambda}\,\sup_{0\le t\le T(\lambda)}\big\|u_{\lambda'}^t-U_{\lambda'}^{\ell(\lambda);t}\big\|_{\Ld^2}\,\lesssim_{u^\circ}\,\lambda^{\gamma},\\
T(\lambda):=\exp\big(\lambda^{-\frac1s}\big),
\qquad
\ell(\lambda):=\ceil{\lambda^{-\frac1s}},
\qquad
s:=s(M,r_0,\alpha,\gamma,d) >0,\nonumber
\end{gather}
where the effective flow $U_\lambda^{\ell;t}$ is defined by
\begin{align}\label{eq:def-Uell-appr}
U_{\lambda}^{\ell;t}(x) := \int_{\R^d} e^{ik\cdot x} e^{-it(|k|^2+\tilde \kappa_{k,\lambda}^\ell)} \hat u^\circ(k)\,d^*k,
\end{align}
in terms of some explicitly given symbol $\tilde \kappa_{k,\lambda}^\ell:=\lambda\sum_{n=0}^\ell\lambda^n\tilde\nu_k^n$ (cf.~Definition~\ref{def:taylor-waves}). 
(When restricting~\eqref{eq:concl-1(i)} to a shorter timescale $T'(\lambda)\le T(\lambda)$, only a smaller number of phase corrections is needed and we can choose $\ell'(\lambda):=\lceil\tfrac1{|\!\log\lambda|}\log(T'(\lambda))\rceil\le\ell(\lambda)$.)
\end{theor1}

\medskip

In particular, the above stability result leads to ballistic transport up to the stretched exponential timescale $T(\lambda)=\exp\big(\lambda^{-\frac1s}\big)$.
Although ballistic transport is expected to hold for all times~\cite{E-92,Zhao-16,Zhao-17,Karpeshina-Lee-Shterenberg-Stolz-15}, this result is new and stands out by its generality as it is established in any dimension and under a mere Diophantine condition.
More precisely, for $m\ge1$, we define the rescaled moments of the flow,
\begin{align}\label{eq:def-moment-Mm}
M_m^t(u_\lambda)\,:=\,\big\|\big(\tfrac{|\cdot|}{t}\big)^mu_\lambda^t\big\|_{\Ld^2},
\end{align}
which can be viewed as measuring the asymptotic ballistic velocity of the wavefunction.
We then show that this velocity $M_m^t(u_\lambda)$ remains close to the velocity $M_m^t(u_0)$ of the free flow $u_0=e^{it\triangle}u^\circ$, which can be explicitly computed and obviously remains of order $1$.

\begin{cor1}[Asymptotic ballistic transport in the small disorder regime]\label{th:main-quasi-tsp}
Consider the quasiperiodic setting~\emph{(QP)}, let $u^\circ\in\Sc(\R^d)$, and consider the Schrödinger flow $u_\lambda$ as in~\eqref{eq:schr-V}.
Then there holds
\begin{align*} 
\lim_{t,\lambda}\big|M_m^t(u_\lambda) - M_m^t(u_0)\big|\,=\,0
\end{align*}
as $t\uparrow\infty$ and $\lambda\downarrow0$ in the regime $t\le \exp(\lambda^{-\frac1s})$
for some $s=s(M,m,r_0,\alpha,d)>0$, where $u_0^t=e^{it\triangle}u^\circ$ denotes the free flow.
\end{cor1}

The proofs of Theorem~\ref{th:main-quasi} and Corollary~\ref{th:main-quasi-tsp} are the object of Section~\ref{chap:pr-taylorbloch} and are based on preliminary estimates established in Section~\ref{chap:QP} on formal Rayleigh-Schrödinger series.
Note that Section~\ref{chap:pr-taylorbloch} is presented in such a way that it may be easily adapted to situations where the estimates of Section~\ref{chap:QP} differ
from the ones considered here.

\begin{rems}\mbox{}\label{rem:QP-res}
\begin{enumerate}[$\bullet$]
\item
\emph{Regularity and algebraic assumptions:}\\
As clear from the proof, the largest timescale allowed by this approach both depends on the regularity of $\tilde V$ and on the algebraic properties of $F$, which is not surprising in view of the known results in 1D (e.g.~\cite{Damanik-17}).
Stretched exponential timescales are obtained only for Gevrey-regular $\tilde V$ and Diophantine $F$ as in~(QP).
On the one hand, if $F$ is Diophantine but if we decrease the regularity of $\tilde V$, the allowed timescale is shortened: for algebraically decaying $\F \tilde V$ the same results hold up to some corresponding algrebraic timescale.
On the other hand, if $\tilde V$ is a trigonometric polynomial (that is, $\F\tilde V$ compactly supported) but if $F$ is only irrational (that is, $F\xi\ne0$ for all $\xi\in\Z^M\setminus\{0\}$), then the results hold up to any algebraic timescale (cf.\@ Remark~\ref{rem:per++}).
\item\emph{Peaked initial data in Fourier space:}\\
Provided $u^\circ$ has a fixed compact support in Fourier space, the error estimates in Theorem~\ref{th:main-quasi} depend on $u^\circ$ via its $\Ld^2(\R^d)$-norm only,
so that they hold uniformly for initial data that are peaked in Fourier space, e.g.\@ of the form $u^\circ_\e(x)=\e^{d/2}e^{ik_0\cdot x}g(\e x)$ with $0<\e\ll1$, $\hat g\in C^\infty_c(\R^d)$, and $k_0$ in the non-resonant set $\Oc$ (cf.~\eqref{def:non-resonant}).
Choosing a scaling relation $\e=\lambda^\beta$ for some $\beta>0$, injecting this in the formula for $U_{\lambda}^\ell$, and Taylor expanding $k\mapsto\tilde\kappa_{k,\lambda}^\ell$ around $k_0$, we are led to an effective PDE.
We refer to~Corollary~\ref{cor:class-wave-qp} below for the corresponding result in case of classical waves, for which we provide details.
\item \emph{Periodic setting:}\\
For a periodic disorder $V$,
perturbation series are indeed summable, so the error estimates are the best possible: we may take $\ell\uparrow\infty$ in~\eqref{eq:concl-1(i)} and the results hold for all $t\ge0$ and $\lambda\ll1$ without any restriction, thus recovering the classical results of~\cite{Asch-Knauf}.
The proof follows from the additional periodic estimates in Remarks~\ref{rem:per+} and~\ref{rem:per++}.
\qedhere
\end{enumerate}
\end{rems}

\medskip

As emphasized in the introduction, all our results hold both in the small disorder regime or at high frequencies.
To illustrate  this, we display a corresponding 
asymptotic ballistic transport result at high frequencies, in a form to be compared to~\cite[Theorem~1.1]{Karpeshina-Lee-Shterenberg-Stolz-15}.
Whereas for small disorder a simple approximation argument allows to consider any smooth initial data, the situation is more delicate at high frequencies: although for initial data of the specific form $u_\lambda^\circ=\lambda^{-\frac d2}u^\circ(\lambda^{-1}\cdot)$ with $u^\circ\in\Sc(\R^d)$ and $\int_{\R^d} u^\circ=0$ we could directly adapt all previous results by scaling, some additional care is needed to treat more general initial data. As a consequence, the result below, which contains a ballistic lower bound, yields no characterization of the asymptotic velocity. The proof is displayed in Section~\ref{sec:pr-cor2}.
\begin{cor1}[Existence of asymptotic ballistic transport at high frequencies]\label{th:main-quasi-tsp-2}
Consider the quasiperiodic setting~\emph{(QP)} and let $\gamma<\frac1{2d+1}$.
There exists a subset $G\subset\R^d$ that is extensive in the sense that for all $n\ge0$,
\[\big|\big(B_{2^n}\setminus B_{2^{n-1}}\big)\setminus G\big|\lesssim \big|B_{2^n}\setminus B_{2^{n-1}}\big|^{1-2\gamma},\]
with the following property: For any sequence $(u_\lambda^\circ)_\lambda\subset\Sc(\R^d)^d$ such that $\hat u_\lambda^\circ$ is supported in $\R^d\setminus B_{\lambda^{-1}}$ 
and satisfies
\begin{itemize}
\item[(i)] the unit bound $\|u_\lambda^\circ\|_{\Ld^2}\le 1$,
\item[(ii)] the  localization in frequencies (upper bound) at scale $\lambda^{-1}$: $\|\langle\cdot\rangle^3\hat u_\lambda^\circ\|_{\Ld^2}\le \lambda^{-3}$,
\item[(iii)] the  localization in frequencies (lower bound) on $G$: $\|\langle\cdot\rangle \mathds1_G \hat u_\lambda^\circ\|_{\Ld^2}\ge \frac12 \lambda^{-1}$,
\end{itemize}
the corresponding Schrödinger flow $u_\lambda$ solution of 
\[i\partial_tu_\lambda=(-\triangle+V) u_\lambda,\qquad u_\lambda|_{t=0}=u_\lambda^\circ,\]
satisfies for all $0<\lambda\le\lambda_0$ and $\|\langle\cdot\rangle^3u_\lambda^\circ\|_{\Ld^2}^\frac13\le t\le \exp(\lambda^{-\frac1s})$,
\[\int_{\R^d} \big( \tfrac{\lambda |x|}{t}\big)^2 |u_\lambda^t(x)|^2dx \ge \tfrac1{4}  ,\]
where $\lambda_0:=\lambda_0(L,M,r_0,\alpha,\gamma,d)>0$ and $s:=s(M,r_0,\alpha,\gamma,d)$.
\end{cor1}
Let us comment on assumptions (ii) \& (iii), and on the ballistic lower bound. Assumption (ii) ensures that the frequencies of $u_\lambda^\circ$ are at most of order $\lambda^{-1}$ (measured in a weighted $\Ld^2$-norm in Fourier space). Assumption (iii) ensures that the projection of $\hat u^\circ_\lambda$ on the extensive non-resonant set $G$ has enough mass at frequencies of order $\lambda^{-1}$ (measured in a weighted $\Ld^2$-norm in Fourier space).
In the case of the free flow (for which one may take $G=\R^d$), assumptions (i)---(iii) would ensure the same ballistic lower bound as above (which shows its optimality), albeit for infinite times.

\subsection{Classical waves in quasiperiodic media}\label{sec:res-class}

We now consider  the corresponding perturbed classical wave flow
\begin{align}\label{eq:class-a}
\partial_{tt}^2 u_\lambda=\nabla \cdot (\Id+\lambda a) \nabla u_\lambda,\qquad u_\lambda|_{t=0}=u^\circ, \qquad \partial_t u_\lambda|_{t=0}=v^\circ,
\end{align}
where the matrix field $a$ is quasiperiodic in the following sense.
\begin{enumerate}[\quad(H)]
\item[(QP$'$)] \emph{Quasiperiodic setting:}
$$a(x):=\tilde a(F^Tx),$$
for $M\ge d$, some (winding) matrix $F\in\R^{d\times M}$ (the transpose of which is denoted by~$F^T$), and some lifted map $\tilde a\in C(\T^M)$.
Assume that the winding matrix $F\in\R^{d\times M}$ satisfies the Diophantine condition~\eqref{eq:cond-Dioph-Th1} for some $r_0>0$, and that the lifted map $\tilde a$ is Gevrey-regular in the sense of~\eqref{eq:cond-Gevrey-Th1} for some $\alpha>0$.
\end{enumerate}
For classical waves (as opposed to quantum waves), a large body of literature is devoted to the identification of effective equations to describe the wave flow on long timescales in perturbed media, e.g.~\cite{APR1,APR2} where diffractive corrections to geometric optics are obtained in case of a periodic perturbation of a periodic medium.
In the following, we extend such results to quasiperiodic perturbations.
We start with a result for fixed initial data; the proof follows that of Theorem~\ref{th:main-quasi} and the main modifications are indicated in Section~\ref{app:gen}.

\begin{theor1}[Nehoro\v{s}ev-type stability for classical waves]\label{thm:class-wave-qp}
Consider the quasiperiodic setting~\emph{(QP$'$)}, let $(u^\circ,v^\circ)\in\Sc(\R^d)^2$, assume that the Fourier transform $\hat v^\circ$ of $v^\circ$ is supported away from $0$, and consider the classical wave flow $u_\lambda$ as in~\eqref{eq:class-a}.
For all $0<\lambda\ll1$, it satisfies the estimate~\eqref{eq:concl-1(i)} of Theorem~\ref{th:main-quasi}
in terms of the effective flow
\begin{equation*}
U_{\lambda}^{\ell;t}(x):= \int_{\R^d}e^{ik\cdot x}\bigg(\cos\Big(t\sqrt{|k|^2+\tilde \kappa_{k,\lambda}^{\ell}}\Big)\,\hat u^\circ(k) +t\sinc\Big(t\sqrt{|k|^2+\tilde \kappa_{k,\lambda}^{\ell}}\Big)\,\hat v^\circ(k)\bigg)d^*k,
\end{equation*}
with $\sinc x:=\frac{\sin x}x$, for some explicitly given symbol $\tilde \kappa_{k,\lambda}^\ell$ (cf.~Section~\ref{sec:7.3}).
\qedhere
\end{theor1}

Next, we specialize this result to the case when the initial data are peaked in Fourier space, and replace equation~\eqref{eq:class-a} by
\begin{gather}\label{eq:class-a-wc}
\partial_{tt}^2 u_{\lambda,\e}=\nabla \cdot (\Id+\lambda a) \nabla u_{\lambda,\e},
\\
u_{\lambda,\e}(x)|_{t=0}=\e^{\frac d2} e^{ik_0 \cdot x} u^\circ(\e x), \qquad \partial_t u_{\lambda,\e}(x)|_{t=0}=\e^\frac d2 e^{ik_0 \cdot x}v^\circ(\e x),\nonumber
\end{gather}
for some non-resonant $k_0 \in \Oc\setminus\{0\}$ (cf.~\eqref{def:non-resonant}). (Note that the case $k_0=0$ is of a very different nature as it is about zooming at the bottom of the spectrum.)
In this setting, for $\lambda,\e\downarrow0$, the effective flow defined in Theorem~\ref{thm:class-wave-qp} solves an explicit PDE.
As in~\cite{APR1,APR2}, the result is naturally expressed by factorizing out the free flow and the group velocity $\pm\frac{k_0}{|k_0|}$.
\begin{cor1}[Diffractive geometric optics]\label{cor:class-wave-qp}
Under the assumptions of Theorem~\ref{thm:class-wave-qp}, consider the classical wave flow $u_{\lambda}$ as in~\eqref{eq:class-a-wc} with non-resonant $k_0\in \Oc\setminus\{0\}$ (cf.~\eqref{def:non-resonant}) and with the scaling relation $\e=\lambda^\beta$ for some $\beta>0$.
Further assume that $(u^\circ,v^\circ)$ has compactly supported Fourier transform and that the lifted map $\tilde a$ is a trigonometric polynomial.
For all $t\ge0$, $0<\lambda\ll1$, and $\ell\ge1$, we have for some $0<\gamma=\gamma(\beta,\ell)\le 1$,
\[\|u_\lambda^t-\tilde U_{\lambda}^{\ell;t}\|_{\Ld^2}\,\lesssim_{k_0,\ell,u^\circ,v^\circ}\,\lambda^\gamma(1+\lambda^\ell t),\]
where the effective flow is given by $\tilde U_{\lambda}^{\ell;t}:=\frac12(\tilde U_{\lambda,+}^{\ell;t}+\tilde U_{\lambda,-}^{\ell;t})$ with
\[\tilde U_{\lambda,\pm}^{\ell;t}(x)\,:=\,\frac{\e^{\frac{d}2}}2 e^{ik_0\cdot (x\mp t\frac{k_0}{|k_0|})}A_{\lambda,\pm}^{\ell;t}\big(\e (x\mp t\tfrac{k_0}{|k_0|})\big),\]
where the amplitudes $A_{\lambda,\pm}^{\ell}$ solve the effective diffractive PDEs
\begin{multline}\label{e.diffrac-corr-QP}
\bigg(i\partial_t \pm\e^2\frac{|k_0|^2\triangle-(k_0\cdot\nabla)^2}{2|k_0|^3}\mp\lambda\frac{\nu_{k_0}^0}{2|k_0|}\bigg)A_{\lambda,\pm}^{\ell}\\
\,=\,\pm \sum_{0\le m\le\ell\atop0\le p\le \lfloor\ell/\alpha\rfloor}\mathds1_{m+p\ge2\atop(m,p)\ne(0,2)}\lambda^{m}\e^{p}C_{m,p}(k_0)\odot(-i\nabla)^{\otimes p}A_{\lambda,\pm}^{\ell},
\end{multline}
with initial data $A_{\lambda,\e,\pm}^{\ell}|_{t=0}=u^\circ\pm\frac{iv^\circ}{|k_0|}$ and with $\odot$ denoting the total contraction of tensors, where $C_{m,p}(k_0)$ are explicitly given $p$-th order tensors for all $m,p\ge0$ (cf.~Section~\ref{sec:7.5}).
\end{cor1}

For the specific choice $\lambda=\e^2$, this result extends~\cite{APR1,APR2} to the case of quasiperiodic perturbations (up to changing variables $(\e x,\e t)\leadsto (x',t')$).
In terms of diffractive geometric optics, it reads as follows:
\begin{enumerate}[$\bullet$]
\item For times $t\ll \e^{-2}\wedge \lambda^{-1}$, equation~\eqref{e.diffrac-corr-QP} for the amplitudes reduces to $i\partial_tA_{\lambda,\pm}^{\ell}=0$ up to negligible terms. Hence the flow $u_{\lambda}^{t}(x)$ simply remains close to
\[\frac{\e^\frac d2}2\sum_\pm e^{ik_0\cdot (x\mp t\frac{k_0}{|k_0|})} \Big(u^\circ\pm\frac{v^\circ}{i|k_0|}\Big)\big(\lambda^\beta (x\mp\tfrac{k_0}{|k_0|} t)\big),\]
that is, the geometric optic approximation with group velocity $\pm\frac{k_0}{|k_0|}$.
\item For times $t\gtrsim \e^{-2}\wedge \lambda^{-1}$, the transported profiles $u^\circ\pm\frac{v^\circ}{i|k_0|}$ are further deformed and their spread is described by the Schrödinger equation~\eqref{e.diffrac-corr-QP}.
This is known as diffractive geometric optics.
The first diffractive correction due to the background appears on times $t\gtrsim\e^{-2}$, while the first diffractive correction due to the disorder $a$ appears on times $t\gtrsim\lambda^{-1}$ and is nonzero whenever $\nu_{k_0}^0:=k_0 \cdot(\int_{\T^M}\tilde a) k_0\neq 0$.
\end{enumerate}

\section{General method and relation to the literature}\label{sec:method}

{
In this section, we start with the description of  the general approach in the perspective of normal forms, we draw a comparison to the recent literature on the topic, and we give a spectral interpretation of the method (also briefly commenting on the corresponding random setting~\cite{DS-2}).
We focus on the small disorder regime.

\subsection{General method: approximate normal forms}\label{sec:strat}

Since the Fourier transform $\F$ diagonalizes $\Lc_0:=-\triangle$ and since the perturbation $\lambda V$ has small intensity $\lambda\ll1$, we may look for a small deformation $\F_\lambda$ of the Fourier transform that diagonalizes the perturbed operator $\Lc_\lambda$. If it exists, $\F_\lambda$ is known as a {\it Bloch wave transform}.
In other words, this amounts to writing $\Lc_\lambda$ in {\it normal form}: decomposing the $\F_\lambda$-symbol of $\Lc_\lambda$ as $k\mapsto|k|^2+\kappa_{k,\lambda}$ (that is, as a perturbation of the $\F$-symbol $k\mapsto|k|^2$ of $\Lc_0$), we would find
\begin{align}\label{eq:Lnormaldec}
\Lc_\lambda\Tc_\lambda = \Tc_\lambda(\Lc_0+\Kc_\lambda),
\end{align}
in terms of the transform $\Tc_\lambda:=\F_\lambda^{-1}\F$ and of the pseudo-differential operator $\Kc_\lambda:=\kappa_{-i\nabla,\lambda}$ (a multiplication operator in Fourier space).
Alternatively, at the level of the Schrödinger flow, this yields
\begin{align}\label{eq:Lnormaldec-flow}
u_\lambda^t=e^{-it\Lc_\lambda}u^\circ=\Tc_\lambda e^{-it(\Lc_0+\Kc_\lambda)}\Tc_\lambda^{-1}u^\circ.
\end{align}
Since $\F_\lambda$ is a small deformation of $\F$, the transform $\Tc_\lambda$ should be close to identity, in which case the flow $u_\lambda$ would be close to the effective flow $U_\lambda^t:=e^{-it(\Lc_0+\Kc_\lambda)}u^\circ$.
The operator~$\Kc_\lambda$ is viewed as an effective unitary correction of the free flow due to the perturbation $\lambda V$ on long timescales.

We describe below the strategy  in the general setting of a stationary ergodic potential $V$, constructed on some probability space $(\Omega,\p)$; see Section~\ref{chap:stat-BF} for precise definitions.
As shown in Examples~\ref{ex:stat}, this encompasses both the case of a stationary random potential, and the case of a periodic or quasiperiodic potential as considered here:
\begin{enumerate}[$\bullet$]
\item In the periodic setting, $\Omega$ reduces to the torus $\T^d$ endowed with the Lebesgue measure and a realization $\omega\in\Omega$ corresponds to a translation of the periodic potential $W(x,\omega):=V(\omega+x)$.
\item In the quasiperiodic setting~(QP), $\Omega$ coincides with the underlying high-dimensional torus~$\T^M$ and $W(x,\omega):=\tilde V(\omega+F^Tx)$.
\end{enumerate}
On the one hand, this unified setting leads to a fruitful comparison between the periodic, quasiperiodic, and random cases (cf.~Section~\ref{sec:spectral}). On the other hand, it suggests to exploit averaging wrt the translation $\omega$, which is a crucial ingredient in our approach. In the periodic and quasiperiodic settings we use Sobolev embeddings to get rid of averaging a posteriori and to set $\omega=0$ in the final results.
Our strategy splits into five steps.

\begin{enumerate}[(S1)]
\item \emph{Stationary Floquet-Bloch fibration.}\\
Rather than considering the Schrödinger operator $\Lc_\lambda$ on $\Ld^2(\R^d)$ for fixed $\omega\in\Omega$, we exploit averaging wrt the realization and view $\Lc_\lambda$ as an operator on the extended Hilbert space $\Ld^2(\R^d\times\Omega)$.
This new operator can be partially diagonalized via Fourier, which leads to a natural decomposition as a direct integral of simpler fibered operators $\Lc_{k,\lambda}$ on the elementary space $\Ld^2(\Omega)$, for $k\in\R^d$.
The (centered) fibered operators take the form
\[\qquad\Lc_{k,\lambda}:=e^{-ik\cdot x}(-\triangle+\lambda V)e^{ik\cdot x}-|k|^2=\Lc_{k,0}+\lambda V,\qquad \Lc_{k,0}:=-\triangle-2ik\cdot\nabla,\]
and act on $\Ld^2(\Omega)$ viewed as the space of stationary fields.
In particular, the Schrödinger flow $u_\lambda$ can be decomposed as
\begin{align}\label{eq:flow-fibration}
\qquad
u_\lambda^t(x)=\int_{\R^d} \hat u^\circ(k)\,e^{ik\cdot x-it|k|^2}\,\big(e^{-it\Lc_{k,\lambda}}1\big)(x,\omega)\,d^*k,
\end{align}
in terms of the fibered evolutions $e^{-it\Lc_{k,\lambda}}1$ on $\Ld^2(\Omega)$.
We refer to this decomposition as the stationary Floquet-Bloch fibration since it extends the well-known corresponding construction in the periodic setting~\cite{Reed-Simon-78,Kuchment-16}.
\item \emph{Fibered perturbation problem.}\\
By Step~(S1), the Schrödinger operator $\Lc_\lambda$ on $\Ld^2(\R^d\times\Omega)$ splits into a family of fibered operators $(\Lc_{k,\lambda})_{k\in\R^d}$ on $\Ld^2(\Omega)$.
More precisely, the constant function $1$ is an eigenfunction of $\Lc_{k,0}$ associated with the eigenvalue $0$, and the decomposition~\eqref{eq:flow-fibration} shows that it suffices to study the perturbation of this constant eigenfunction in the spectrum of $\Lc_{k,\lambda}$. 
If the fibered operator $\Lc_{k,0}$ on $\Ld^2(\Omega)$ had discrete spectrum, the Kato-Rellich perturbation theory~\cite{Kato-95,Rellich} would ensure the existence of a (local) branch $\lambda\mapsto (\kappa_{k,\lambda},\psi_{k,\lambda})$ of eigenvalues and eigenfunctions of $\Lc_{k,\lambda}$ starting at the eigenvalue $0$ and the constant eigenfunction.
For periodic $V$, the operators $\Lc_{k,0}$ indeed have discrete spectrum and $0$ is typically a simple eigenvalue, while for quasiperiodic $V$ the eigenvalue $0$ is embedded in dense pure point spectrum and for random $V$ it is embedded in absolutely continuous spectrum~\cite{DS-2}.
In the latter two cases, $0$ is  not isolated in the spectrum of $\Lc_{k,0}$,
and there is no general perturbation theory at our disposal that would allow to construct branches $\lambda\mapsto (\kappa_{k,\lambda},\psi_{k,\lambda})$. Such branches could actually be not smooth or even not exist (cf.~Section~\ref{sec:spectral}).
\item \emph{Approximate Bloch waves.}\\
Rather than investigating the existence of a branch $\lambda\mapsto (\kappa_{k,\lambda},\psi_{k,\lambda})$ as in Step~(S2), we consider the formal Rayleigh-Schrödinger perturbation series that would describe such a branch, should it exist, the terms of which are characterized by abstract PDEs in $\Ld^2(\Omega)$. By truncating this series and also regularizing the terms if needed, we are led to an approximate branch $\lambda\mapsto (\kappa_{k,\lambda,\mu}^\ell,\psi_{k,\lambda,\mu}^\ell)$, which is referred to as a branch of approximate Bloch waves, where $\ell$ and $\mu$ are truncation and regularization parameters. Regularization is in fact only needed in the random setting (cf.~Section~\ref{sec:spectral}), while truncation is enough in the quasiperiodic setting.
By construction, $(\kappa_{k,\lambda,\mu}^\ell,\psi_{k,\lambda,\mu}^\ell)$ satisfies the eigenvalue equation for $\Lc_{k,\lambda}$ up to an error, called eigendefect. In other words, $\kappa_{k,\lambda,\mu}^\ell$ belongs to the pseudospectrum, e.g.~\cite{Trefethen-05}, with the precision given by the bound on the eigendefect.
In contrast with approximate spectrum, our analysis further requires to control the constructed pseudomode $\psi_{k,\lambda,\mu}^\ell$ itself in $\Ld^2(\Omega)$, and in particular its closeness to the constant function $1$.
\item \emph{Control of the eigendedect.}\\
Using the approximate Bloch waves of Step~(S3) instead of exact ones yields errors involving the eigendefect, which thus needs to be controlled.
In order to reach optimality in the scaling in $\ell$, we proceed to a crucial resummation of the Rayleigh-Schrödinger series using a tree-counting argument (which allows one to replace naive bounds of the form $C^{n^2}$ by 
bounds of the form $C^n$, cf.~Proposition~\ref{prop:sol-nonlin-rec} below).
The control of the eigendefect depends in an essential way on the nature of the perturbation $V$.
These estimates are the main technical ingredient required by the approach.
\item\emph{Approximate normal form decomposition.}\\
The above construction naturally leads to defining the following transform,
\[\qquad\Tc_{\lambda,\mu}^\ell v(x,\omega):=\int_{\R^d}\hat v(k)\,e^{ik\cdot x}\,\psi_{k,\lambda,\mu}^\ell(x,\omega)\,d^*k,\qquad v\in \Ld^2(\R^d).\]
Considering the pseudo-differential operator $\Kc_{\lambda,\mu}^\ell:=\kappa_{-i\nabla,\lambda,\mu}^\ell$, we obtain
\begingroup\allowdisplaybreaks
\begin{multline}\label{eq:approx-normal}
\qquad\big(\Lc_\lambda\Tc_{\lambda,\mu}^\ell v\big)(x,\omega)=\big(\Tc_{\lambda,\mu}^\ell(\Lc_0+\Kc_{\lambda,\mu}^\ell)v\big)(x,\omega)\\
+\underbrace{\int_{\R^d}\hat v(k)\,e^{ik\cdot x}\,(\Lc_{\lambda,k}-\kappa_{k,\lambda,\mu}^\ell)\,\psi_{k,\lambda,\mu}^\ell(x,\omega)\,d^*k}_{\displaystyle =: \big(\Rc_{\lambda,\mu}^\ell v\big)(x,\omega)},
\end{multline}
\endgroup
where the residual operator $\Rc_{\lambda,\mu}^\ell$ involves the eigendefect and is hopefully shown to be small in Step~(S4). This yields an approximate normal form decomposition of~$\Lc_\lambda$ to be compared to~\eqref{eq:Lnormaldec}.
In addition, since $\psi_{k,\lambda,\mu}^\ell$ is close to the constant function~$1$, the transform $\Tc_{\lambda,\mu}^\ell$ is close to identity.
An effective description of the flow on long timescales then follows as in~\eqref{eq:Lnormaldec-flow}: more precisely, if $\Rc_{\lambda,\mu}^\ell$ is of order $O(g(\lambda))$ in some scaling of $\mu$ and $\ell$, then on the timescale $t\le O(g(\lambda)^{-1})$ the flow $u_\lambda$ remains close to the effective flow
\[\qquad U_{\lambda,\mu}^{\ell;t}:=e^{-it(\Lc_0+\Kc_{\lambda,\mu}^\ell)}u^\circ.\]
\end{enumerate}

\noindent
Steps~(S1)--(S3) (and the crucial resummation argument in Step~(S4)) are detailed in Section~\ref{chap:blochballi} below, which serves as a basis for the rest of this contribution.
The truncation in Step~(S3) and the dynamical properties of approximate branches in Step~(S5) are inspired by the treatment of the classical wave operator with heterogeneous coefficients at low wave number
in~\cite{BG-16}.

\subsection{Comparison to the literature}

The existence of an (exact) Bloch wave transform~$\F_\lambda$ in the quasiperiodic setting is a difficult and open question.
In dimension $d=2$ for a specific class of quasiperiodic potentials,
the already mentioned works by Karpeshina and coauthors~\cite{Karpeshina-Lee-10,Karpeshina-Shterenberg-14,Karpeshina-Lee-Shterenberg-Stolz-15} show that an (exact) normal form decomposition~\eqref{eq:Lnormaldec} holds with $\Tc_\lambda$ replaced by some non-bijective map. More precisely, there is a large set of initial data (at high frequencies) for which~\eqref{eq:Lnormaldec-flow} holds, which in particular implies that ballistic transport holds on all times for such initial data. This can be viewed as a KAM-type result in an infinite-dimensional setting.
Pursuing this analogy with nearly integrable Hamiltonian systems, the so-called Arnol\cprime{}d diffusion phenomenon~\cite{Arnold-64} would suggest that~\eqref{eq:Lnormaldec-flow} could possibly  only hold for some (typically strict) subspace of initial data and under strong assumptions on the quasiperiodic structure.
However, in the small disorder regime $\lambda\ll1$, recent results in 1D~\cite{Zhao-16,Zhao-17} rather advocate that the normal form decomposition~\eqref{eq:Lnormaldec} might generically hold true with bijective $\Tc_\lambda$. The main difficulty for such a result is related to the dense crossings of eigenvalues as explained in the spectral interpretation below.
In contrast, in the present contribution, we focus on approximate versions~\eqref{eq:approx-normal} of the normal form decomposition~\eqref{eq:Lnormaldec}. The validity up to a stretched exponential timescale is obtained by optimizing wrt the truncation parameter $\ell$ in~\eqref{eq:approx-normal}.
Rather than a KAM result, this has the flavor of a Nehoro\v{s}ev stability result~\cite{Nekhoroshev-2,Nekhoroshev-1,Nekhoroshev-3}, which holds for all initial data and for (essentially) any quasiperiodic structure as stated in Theorem~\ref{th:main-quasi}. As such, the present work nicely completes the recent KAM-type results of~\cite{Karpeshina-Lee-10,Karpeshina-Shterenberg-14,Karpeshina-Lee-Shterenberg-Stolz-15,Zhao-16,Zhao-17}.

\subsection{A new spectral perspective}\label{sec:spectral}
As explained in Step~(S2), our general approach reduces the description of the Schrödinger flow $u_\lambda$ to the perturbation analysis of the eigenvalue $0$ in the spectrum of the fibered operators $\Lc_{k,\lambda}$. The drastic difference of the expected transport behaviors in the periodic, quasiperiodic, and random settings can then be related to the fundamental difference of the corresponding perturbation problems.

\begin{enumerate}[$\bullet$]
\item \emph{Periodic case: perturbation of discrete spectrum.}\\
If the spectrum of $\Lc_{k,0}$ is discrete (and if the perturbation $\lambda V$ is $\Lc_{k,0}$-compact), as in case of a periodic disorder $V$, the Kato-Rellich perturbation theory ensures the existence and analyticity of (most of) the fibered branches $\lambda\mapsto(\kappa_{k,\lambda},\psi_{k,\lambda})$, which are then given by their (convergent) Rayleigh-Schrödinger series for $\lambda\ll1$. As discussed in Remark~\ref{rem:per+}, the analyticity of the branches cannot hold uniformly when the fibration parameter $k$ gets close to the so-called diffraction hyperplanes, that is, to the values such that the eigenvalue $0$ is not simple in the spectrum of $\Lc_{k,0}$.
Regardless of this subtlety, in the periodic case, a Bloch transform $\F_\lambda$ is known to exist, is bijective, and is strongly close to the Fourier transform $\F$ (e.g.~\cite{Kuchment-16}).
\item \emph{Quasiperiodic case: perturbation of dense pure point spectrum.}\\
If the spectrum of $\Lc_{k,0}$ is dense pure point with a (typically simple) eigenvalue at $0$, as is the case for a quasiperiodic disorder $V$, and if a branch of eigenvalues $\lambda\mapsto\kappa_{k,\lambda}$ exists, then it is typically not analytic due to dense crossings with other eigenvalues. In particular, the formal Rayleigh-Schrödinger perturbation series should not converge in that setting. Our approach then amounts to using the Rayleigh-Schrödinger series as an asymptotic series describing a likely existing branch. Such perturbative information are of course not strong enough to obtain conclusions on all timescales, as opposed to the non-perturbative approaches in~\cite{Karpeshina-Lee-10,Karpeshina-Shterenberg-14,Karpeshina-Lee-Shterenberg-Stolz-15}.
\item \emph{Random case: perturbation of absolutely continuous spectrum.}\\
If the spectrum of $\Lc_{k,0}$ consists of a simple eigenvalue at $0$ embedded in an absolutely continuous part, as is typically the case for random disorder $V$ (cf.~\cite{DS-2}), we expect the eigenvalue to disappear whenever $\lambda>0$ in view of Fermi's Golden Rule. As no branch of eigenvalues would then exist, we may wonder about the meaning of the approximate branch $\lambda\mapsto\kappa_{k,\lambda,\mu}^\ell$ that we propose to construct.
As observed in~\cite{DS-2}, $\kappa_{k,\lambda,\mu}^\ell$ actually admits a complex-valued limit as $\mu\downarrow0$,
which is naturally interpreted as an (approximate) branch of complex resonances.
The need to regularize the coefficients in the Rayleigh-Schrödinger series in this setting is precisely related to the fact that the corresponding resonant modes cannot belong to the space $\Ld^2(\Omega)$: the approximate Bloch waves $\psi_{k,\lambda,\mu}^\ell$ are viewed as approximate resonant modes and do not admit a limit in $\Ld^2(\Omega)$ as $\mu\downarrow0$.
This explains the limitation in the random setting:
since our general approach requires to stick to the  $\Ld^2(\Omega)$-topology, we are limited to timescales such that resonances are not visible, i.e., to timescales $t=o(\lambda^{-2})$. More precisely, we can show that the Schrödinger flow remains close to the free flow for $t=o(\lambda^{-2}|\!\log\lambda|^{-1})$ whenever the potential $V$ has fast decaying correlations, which is however not new and is essentially already contained in~\cite{Spohn-77}.
We refer to~\cite{DS-2} for a discussion of the Floquet-Bloch approach and resonance analysis beyond that timescale.
\end{enumerate}

}

\section{Approximate stationary Floquet-Bloch theory}\label{chap:blochballi}

In this section, we adapt the standard periodic Floquet-Bloch theory (e.g.~\cite{Kuchment-16}) to the general stationary setting, we show how the behavior of the Schrödinger flow is reduced to a fibered perturbation problem, and we approximately solve this perturbation problem in terms of suitable approximate Bloch waves. This covers Steps~(S1)--(S3) of the general approach of Section~\ref{sec:strat}.

\subsection{Stationary Floquet-Bloch theory}

We start by adapting the standard periodic Bloch-Floquet theory (e.g.~\cite{Kuchment-16}) to the general stationary setting.

\subsubsection{Preliminary:  notion of stationarity}\label{chap:stat-BF}

Assume that the probability space $(\Omega,\p)$ is endowed with a measurable action $\tau:=(\tau_x)_{x\in\R^d}$ of the group $(\R^d,+)$ on $\Omega$, that is, the maps $\tau_x:\Omega\to\Omega$ are measurable for all $x$ and they satisfy
\begin{enumerate}[\quad$\bullet$]
\item $\tau_x\circ\tau_y=\tau_{x+y}$ for all $x,y\in\R^d$;
\item $\pr{\tau_xA}=\pr{A}$ for all $x\in\R^d$ and all measurable $A\subset\Omega$;
\item the map $\R^d\times\Omega\to\Omega:(x,\omega)\mapsto\tau_x\omega$ is jointly measurable.
\end{enumerate}
We then assume that the potential $V$ is given by $V(x,\omega):=\tilde V(\tau_{-x}\omega)$ for some random variable $\tilde V:\Omega\to\R$.
More generally, a measurable function $f:\R^d\times\Omega\to\R$ is said to be \emph{$\tau$-stationary} (or simply \emph{stationary}) if it satisfies $f(x,\omega)=f(0,\tau_{-x}\omega)$ for all $x,\omega$. In particular, this implies $\expec{f(x,\cdot)}=\expec{f(0,\cdot)}$ for all $x$, and it ensures that $f$ is jointly measurable and that the map $\omega\mapsto f(x,\omega)$ is measurable for all $x$.
Setting $\tilde f(\omega):=f(0,\omega)$, stationarity obviously yields a bijection between random variables $\tilde f:\Omega\to\R$ and stationary measurable functions $f:\R^d\times\Omega\to\R$. The function $f$ is then called the \emph{stationary extension} of the random variable $\tilde f$.
In particular, the subspace of stationary functions $f:\R^d\times\Omega\to\R$ in $\Ld^2(\Omega,\Ld^2_\loc(\R^d))$ is identified with the Hilbert space $\Ld^2(\Omega)$, and the weak gradient $\nabla$ on locally square integrable functions then turns by stationarity into a linear operator on $\Ld^2(\Omega)$. For all $l\ge0$, we may further define the (Hilbert) space $H^l(\Omega)$ as the space of all random variables $\tilde f\in\Ld^2(\Omega)$ the stationary extension $f$ of which belongs to $\Ld^2(\Omega;H^l_\loc(\R^d))$.
Also note that, by a stochastic version of Lusin's theorem, the joint measurability condition above implies that $\tau$-stationary functions are necessarily stochastically continuous; in particular, for all $\tilde f\in\Ld^2(\Omega)$, the map $\R^d\to\Ld^2(\Omega):y\mapsto \tilde f(\tau_{-y}\cdot)$ is continuous.
We refer to e.g.~\cite[Appendix~A.2]{D-Gloria-14} for details.

\begin{exs}\mbox{}\label{ex:stat}
It is well-known that periodic and quasiperiodic (as well as almost periodic~\cite{PapaVara}) potentials $V$ can be viewed as random stationary potentials:
\begin{enumerate}[$\bullet$]
\item For periodic $V$, we set $\Omega:=\T^d$ endowed with the Lebesgue measure, we define $\tau_{-x}\omega=\omega+x\mod\T^d$, and we set $W(x,\omega):=V(\omega+x)$, which defines a stationary field $W$.
The stationary gradient on $\Ld^2(\Omega)$ then coincides with the usual weak gradient on $\Ld^2(\T^d)$
\item For quasiperiodic $V$ as in~(QP), we set $\Omega:=\T^M$ endowed with the Lebesgue measure, we define $\tau_{-x}\omega=\omega+F^Tx\mod\T^M$, and we set $W(x,\omega):=\tilde V(\omega+F^Tx)$, which defines a stationary field $W$. The stationary gradient on $\Ld^2(\Omega)$ then coincides with $F\nabla_{\T^M}$ in terms of the weak gradient $\nabla_{\T^M}$ on $\Ld^2(\T^M)$.
\end{enumerate}
In the sequel, we consider the Schrödinger flow with $V$ replaced by $W(\cdot,\omega)$ and we exploit averaging wrt the translation $\omega$. Since the main results
of this paper are stated for $\omega=0$, we need to get rid of the averaging wrt $\omega$, which we do  using Sobolev embeddings.
For simplicity, we make no difference between $V$ and~$W$ in the notation.
\qedhere
\end{exs}

\subsubsection{Stationary Floquet transform}
For $f\in \Ld^2(\R^d\times\Omega)$, we first define the (non-stationary) Floquet transform $\mathcal{U}f:\R^d\times\R^d\times\Omega\to\R$ by
\begin{equation}
\mathcal{U}f(k,x,\omega)=\F \left[ \mathcal{O}_x f(\cdot,\omega)\right] (k),\qquad \mathcal{O}_x f(y,\omega)=f(x+y,\tau_y\omega).
\end{equation}
The following properties directly follow from this definition.
\begin{lem}
Writing $e_k(x):=e^{ik\cdot x}$,
\begin{enumerate}[(i)]
\item the map $\mathcal{O}_x$ (hence also the map $f\mapsto\mathcal Uf(\cdot,x,\cdot)$) is unitary on $\Ld^2(\R^d\times \Omega)$ for all $x$;
\item $\mathcal{U}f(k,\cdot,\cdot)$ is $e_k$-stationary in the sense that $\mathcal{U}f(k,x+z,\omega)=e_k(z)\,\mathcal{U}f(k,x,\tau_{-z}\omega)$;
\item $f(x,\omega)=\F^{-1} \left[ \mathcal{U}f(\cdot,x,\omega) \right](0)$, where the RHS is well-defined in $\Ld^2(\R^d\times\Omega)$.\qedhere
\end{enumerate}
\end{lem}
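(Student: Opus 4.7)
The plan is to verify each of the three items directly from the definitions, relying on three standing inputs: the invariance of $\p$ under each $\tau_y$, the cocycle relation $\tau_y\circ\tau_z=\tau_{y+z}$ (hence $\tau_0=\Id$), and the joint measurability of $(y,\omega)\mapsto\tau_y\omega$ (which justifies Fubini-type manipulations).

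For (i), I would first show that $\mathcal{O}_x$ is an isometry on $\Ld^2(\R^d\times\Omega)$. By Fubini and the $\tau_y$-invariance of $\p$,
\[\|\mathcal{O}_xf\|_{\Ld^2(\R^d\times\Omega)}^2=\int_{\R^d}\expec{|f(x+y,\tau_y\cdot)|^2}\,dy=\int_{\R^d}\expec{|f(x+y,\cdot)|^2}\,dy=\|f\|_{\Ld^2(\R^d\times\Omega)}^2,\]
the last step using translation invariance of the Lebesgue measure in $y$. Bijectivity follows from the explicit inverse formula $\mathcal{O}_x^{-1}g(y,\omega)=g(y-x,\tau_{x-y}\omega)$, which is verified by direct composition using $\tau_{y-x}\circ\tau_{x-y}=\tau_0=\Id$. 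Then $f\mapsto\mathcal{U}f(\cdot,x,\cdot)$ is the composition of the unitary $\mathcal{O}_x$ with partial Fourier transform in the first variable, unitary by Plancherel fibered over $\omega$ (reading the target space as $\Ld^2(\R^d\times\Omega)$ with the rescaled measure $d^*k\otimes d\p$, consistent with the paper's convention for $\F$).

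For (ii), I would substitute $y'=y+z$ in
\[\mathcal{U}f(k,x+z,\omega)=\int_{\R^d}e^{-ik\cdot y}f(x+z+y,\tau_y\omega)\,dy;\]
the phase factor becomes $e^{ik\cdot z}e^{-ik\cdot y'}$ and, by the cocycle relation, $\tau_y\omega=\tau_{y'-z}\omega=\tau_{y'}(\tau_{-z}\omega)$, which identifies the integral with $e_k(z)\,\mathcal{U}f(k,x,\tau_{-z}\omega)$.

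For (iii), for a.e.\@ $\omega$ the function $\mathcal{O}_xf(\cdot,\omega)$ lies in $\Ld^2(\R^d)$, and the $\Ld^2$-Fourier inversion formula gives $\F^{-1}[\mathcal{U}f(\cdot,x,\omega)](y)=\mathcal{O}_xf(y,\omega)$ in the $\Ld^2_y$-sense; formally setting $y=0$ yields $\mathcal{O}_xf(0,\omega)=f(x,\tau_0\omega)=f(x,\omega)$. Since pointwise evaluation of an $\Ld^2$ function at one point is not a priori well-defined, the statement is to be interpreted globally in $\Ld^2(\R^d\times\Omega)$: composing the unitary map of (i) with its inverse recovers $f$, and this composition is what is encoded by the formula $\F^{-1}[\mathcal{U}f(\cdot,x,\omega)](0)$. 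The only mild obstacle of the proof lies precisely in this well-posedness subtlety; parts (i) and (ii) are otherwise routine change-of-variable arguments.
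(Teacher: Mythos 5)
Your parts (i) and (ii) are correct and fill in exactly the change-of-variable computations that the paper leaves implicit (the paper merely asserts that the lemma ``follows directly from the definition'' and gives no proof for these items). The isometry computation for $\mathcal{O}_x$ via Fubini and $\tau$-invariance of $\p$, the explicit inverse, the fibered Plancherel argument, and the substitution $y'=y+z$ with the cocycle relation in (ii) are all what is needed.

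For (iii), however, you identify the right obstacle (pointwise evaluation of an $\Ld^2_y$-function at $y=0$ is not a priori defined) but you do not resolve it; you resort to a reinterpretation --- ``composing the unitary map of (i) with its inverse recovers $f$, and this composition is what is encoded by the formula'' --- which amounts to redefining the right-hand side rather than proving that the stated expression is genuinely well-defined. The paper handles this differently and more rigorously: it invokes the stochastic continuity that follows from the joint measurability of the action $\tau$ (stated in Section~\ref{chap:stat-BF}: for $\tilde f\in\Ld^2(\Omega)$, $y\mapsto\tilde f(\tau_{-y}\cdot)$ is continuous in $\Ld^2(\Omega)$). Combined with translation continuity in $\Ld^2(\R^d)$, this makes the map
\[
\R^d\to\Ld^2(\R^d\times\Omega),\qquad y\mapsto\big[(x,\omega)\mapsto\F^{-1}[\mathcal{U}f(\cdot,x,\omega)](y)=f(x+y,\tau_y\omega)\big]
\]
a \emph{continuous} $\Ld^2(\R^d\times\Omega)$-valued function of $y$, so its value at $y=0$ is an unambiguously defined element of $\Ld^2(\R^d\times\Omega)$, namely $f$. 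This is the content of the paper's parenthetical remark following the lemma. So while your (i)--(ii) match, your (iii) has a genuine gap: you need the stochastic-continuity input to legitimize the evaluation at $y=0$, not a notational reinterpretation.
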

(The expression $\F^{-1} \left[ \mathcal{U}f(\cdot,x,\omega) \right](0)$ is indeed well-defined in $\Ld^2(\R^d\times\Omega)$ since the stochastic continuity that follows from the measurability of the action~$\tau$ ensures that the map $\R^d\to\Ld^2(\R^d\times\Omega):y\mapsto ((x,\omega)\mapsto\F^{-1}[\mathcal{U}f(\cdot,x,\omega)](y)=f(x+y,\tau_y\omega))$ is continuous, cf.~Section~\ref{chap:stat-BF}.)

For $f\in \Ld^2(\R^d\times\Omega)$, it is then natural to define
\begin{equation}\label{e:def-calV}
\Vc f(k,x,\omega):=e^{-ik\cdot x}\,\mathcal{U}f(k,x,\omega),
\end{equation}
which, for any fixed $k\in\R^d$, is stationary by the above properties. Also, for all $x\in\R^d$, the map $f\mapsto\Vc f(\cdot,x,\cdot)$ is unitary on $\Ld^2(\R^d\times\Omega)$. With the usual identification of $\Vc f$ with its restriction $\Vc f(\cdot,0,\cdot)$, we may thus view $\Vc$ as a unitary operator on $\Ld^2(\R^d\times\Omega)$, which we refer to as the {\it stationary Floquet transform}.
\begin{lem}\label{lem:floquet}
The stationary Floquet transform $\mathcal V$ satisfies
\begin{enumerate}[(i)]
\item $f(x,\omega)=\F^{-1} \left[k\mapsto e_{k}(x) \mathcal{V}f(k,x,\omega) \right](0)$, where the RHS is defined in $\Ld^2(\R^d\times\Omega)$;
\item denoting by $\iota:\Ld^2(\R^d)\hookrightarrow\Ld^2(\R^d\times\Omega)$ the canonical injection, we have $\mathcal V\circ\iota=\iota\circ\F$ on $L^2(\R^d)$;
\item for all $f\in\Ld^2(\R^d\times\Omega)$ and $g\in\Ld^2(\Omega)$ with $gf\in \Ld^2(\R^d\times\Omega)$, we have $\mathcal V(gf)=g\mathcal V f$.\qedhere
\end{enumerate}
\end{lem}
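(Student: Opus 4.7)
The plan is to verify each of (i), (ii), (iii) by direct computation from the definition $\Vc f(k,x,\omega) = e^{-ik\cdot x}\,\mathcal Uf(k,x,\omega)$, reducing everything to the previous lemma on $\mathcal U$ together with the group law $\tau_x\circ\tau_y = \tau_{x+y}$ and the definition of stationarity. There is no real obstacle; the content is just that the $e^{-ik\cdot x}$ twist in the definition of $\Vc$ is tailored so that stationary objects interact with $\Vc$ in the expected pointwise manner.

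For (i), I would simply observe that by definition $e_k(x)\Vc f(k,x,\omega) = \mathcal Uf(k,x,\omega)$, so the claimed identity reduces to the third item of the previous lemma, namely $f(x,\omega) = \F^{-1}[\mathcal Uf(\cdot,x,\omega)](0)$. The well-definedness of the RHS in $\Ld^2(\R^d\times\Omega)$ is then inherited from that earlier statement, which itself rests on the stochastic continuity of $\tau$-stationary functions recalled in Section~\ref{chap:stat-BF}.

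For (ii), I would compute $\mathcal U(\iota h)$ for $h\in\Ld^2(\R^d)$: since $\iota h$ is independent of $\omega$, one has $\mathcal O_x(\iota h)(y,\omega) = h(x+y)$, and the standard Fourier-translation identity gives $\mathcal U(\iota h)(k,x,\omega) = e^{ik\cdot x}\hat h(k)$. Multiplying by $e^{-ik\cdot x}$ yields $\Vc(\iota h)(k,x,\omega) = \hat h(k)$, which is independent of $(x,\omega)$ and therefore equals $\iota(\F h)(k,x,\omega)$, as required.

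For (iii), the key point is that if $g\in\Ld^2(\Omega)$ is identified with its stationary extension $g(x,\omega)=\tilde g(\tau_{-x}\omega)$, then along the orbit entering the definition of $\mathcal U$ it is constant:
\[
g(x+y,\tau_y\omega) \;=\; \tilde g(\tau_{-(x+y)}\tau_y\omega) \;=\; \tilde g(\tau_{-x}\omega) \;=\; g(x,\omega),
\]
using $\tau_{-x-y}\circ\tau_y = \tau_{-x}$. Hence $\mathcal O_x(gf)(y,\omega) = g(x,\omega)\,\mathcal O_x f(y,\omega)$, and since the scalar factor $g(x,\omega)$ is $y$-independent it pulls out of the Fourier transform in $y$, giving $\mathcal U(gf) = g\,\mathcal Uf$ and thus $\Vc(gf) = g\,\Vc f$. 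Under the identification $\Vc h(k,\omega) := \Vc h(k,0,\omega)$, this reads $\Vc(gf)(k,\omega) = \tilde g(\omega)\,\Vc f(k,\omega)$, which is exactly the stated intertwining. The integrability hypothesis $gf\in\Ld^2(\R^d\times\Omega)$ ensures the Fourier transform in $y$ is well-defined, closing the argument.
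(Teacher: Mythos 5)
Your proof is correct, and it follows essentially the route the paper intends: the paper states Lemma~\ref{lem:floquet} without an explicit proof, treating it as an immediate consequence of the definition $\Vc f(k,x,\omega)=e^{-ik\cdot x}\,\mathcal Uf(k,x,\omega)$ together with the preceding lemma on $\mathcal U$. Your direct computations for (i), (ii), (iii) — in particular the observation in (iii) that $g(x+y,\tau_y\omega)=g(x,\omega)$ by the group law, so the stationary factor pulls out of the Fourier transform in $y$ — are precisely what is being left to the reader.
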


\subsubsection{Stationary Floquet-Bloch fibration}

The stationary Floquet transform $\mathcal V$ decomposes differential operators on $\Ld^2(\R^d\times\Omega)$ into direct integrals of elementary fibered operators on the simpler space~$\Ld^2(\Omega)$ of stationary functions.

On the one hand, the Laplacian $-\triangle$ on $\Ld^2(\R^d\times\Omega)$ is transformed as follows by the stationary Floquet transform $\mathcal V$, for all $f\in\Ld^2(\Omega;H^2(\R^d))$,
\begin{align}\label{eq:lap-k}
\mathcal V[(-\triangle)f](k,x,\omega)=-(\nabla+ik)\cdot(\nabla+ik)\mathcal Vf(k,x,\omega)=(|k|^2-\triangle_k)\mathcal Vf(k,x,\omega)
\end{align}
in terms of the (centered) fibered Laplacian
\[-\triangle_k:=e^{-ik\cdot x}(-\triangle)e^{ik\cdot x}-|k|^2=-(\nabla+ik)\cdot(\nabla+ik)-|k|^2=-\triangle-2ik\cdot\nabla,\]
where all the derivatives are taken in the weak sense wrt the $x$-variable. 
The action of the operator $-\triangle_k$ is considered in~\eqref{eq:lap-k} on stationary functions, hence equivalently on~$\Ld^2(\Omega)$. Its domain is then clearly $D(-\triangle_k)=H^2(\Omega)$, and the centering ensures that the constant function $1$ belongs to its kernel.

On the other hand, since the potential $V\in\Ld^2(\Omega)$ is stationary, it (densely) defines a multiplicative operator on $\Ld^2(\R^d\times\Omega)$.
In the periodic and quasiperiodic settings as considered in the present article, $V$ turns out to be uniformly bounded, hence it defines a bounded self-adjoint operator on $\Ld^2(\R^d\times\Omega)$.
In view of the random case,
we emphasize that the boundedness of $V$ is not needed for our purposes:
if $V$ satisfies a lower bound $V(x,\omega)\ge -K(\omega)(1+|x|^2)$ for some random variable $K$ with $\expec{|K|^2}<\infty$ (which is a mild requirement), the Faris-Lavine argument~\cite{Faris-Lavine-74} ensures that the corresponding Schrödinger operator $\Lc_{\lambda}=-\triangle+\lambda V$ on $\Ld^2(\R^d\times\Omega)$ is essentially self-adjoint on $\Ld^\infty(\Omega;H^2(\R^d,|x|^2dx))$.
As in~\eqref{eq:lap-k}, we then find for all $f\in D(\Lc_\lambda)$, using Lemma~\ref{lem:floquet}(iii),
\begin{align}\label{e.dir-decomp}
\mathcal V[\Lc_\lambda f](k,\omega)=(|k|^2+\Lc_{k,\lambda})\mathcal Vf(k,\omega),
\end{align}
in terms of the fibered Schrödinger operator
\[\Lc_{k,\lambda}:=e^{-ik\cdot x}(-\triangle+\lambda V)e^{ik\cdot x}-|k|^2=-\triangle_k+\lambda V.\]
For fixed $k$, we view the fibered operator $\Lc_{k,\lambda}$ as an essentially self-adjoint operator on~$\Ld^2(\Omega)$.
Using direct integral representation  (see e.g.~\cite[p.280]{Reed-Simon-78}), we may reformulate Lemma~\ref{lem:floquet}(i) as
\begin{gather*}
\Ld^2(\R^d\times\Omega)= \int_{\oplus} e_k \Ld^2(\Omega)\,d^*k,\nonumber\\
-\triangle= \int_{\oplus} e_k\,(|k|^2-\triangle_k)\,d^*k,
\qquad \Lc_\lambda= \int_{\oplus} e_k\,(|k|^2+\Lc_{k,\lambda})\,d^*k.
\end{gather*}

This fibration leads to the following useful decomposition of the Schrödinger flow~\eqref{eq:schr-V}:
for an initial condition $u^\circ\in\Ld^2(\R^d)$, denoting as before by $\iota:\Ld^2(\R^d)\hookrightarrow\Ld^2(\R^d\times\Omega)$ the canonical injection and using Lemma~\ref{lem:floquet}(i)--(ii) and~\eqref{e.dir-decomp}, we find
\begin{eqnarray*}
\lefteqn{u_\lambda^t(x,\omega)=\big(e^{-it\Lc_\lambda}\iota u^\circ\big)(x,\omega)
\,\stackrel{\text{(i)}}{=}\,\F^{-1} \Big[ k\mapsto e^{ik\cdot x}\,\Vc\big(e^{-it\Lc_\lambda}\iota u^\circ\big)(k,x,\omega) \Big](0)}
\\
&\hspace{2cm}\stackrel{\eqref{e.dir-decomp}}{=}&\F^{-1} \Big[ k\mapsto e^{ik\cdot x}e^{-it|k|^2}\big(e^{-it\Lc_{k,\lambda}} \mathcal{V}\iota u^\circ\big)(k,x,\omega) \Big](0)\\
&\hspace{2cm}\stackrel{\text{(ii)}}{=}&\F^{-1} \Big[ k\mapsto \hat u^\circ(k)\,e^{ik\cdot x}e^{-it|k|^2}\big(e^{-it\Lc_{k,\lambda}}1\big)(k,x,\omega) \Big](0)\\
&\hspace{2cm}=&\F^{-1}\Big[k\mapsto  \hat u^\circ(k)\,e^{ik\cdot x} e^{-it|k|^2} \int_{\R}e^{-it\kappa} d\mu_{k,\lambda}^1(\kappa)(x,\omega) \Big](0),
\end{eqnarray*}
in terms of the $\Ld^2(\Omega)$-valued spectral measure $\mu_{k,\lambda}^1$ of $\Lc_{k,\lambda}$ associated with the constant function $1$.
Provided we have enough integrability wrt the $k$-variable, this takes the simpler form
\begin{eqnarray}\label{eq:bloch-decomp}
u_\lambda^t(x,\omega)&=&\int_{\R^d} \hat u^\circ(k)\,e^{-it|k|^2} e^{ik\cdot x}\, \big(e^{-it\Lc_{k,\lambda}}\big)(x,\omega)\,d^*k\nonumber\\
&=&\int_{\R^d}\int_{\R} \hat u^\circ(k)\,e^{-it(|k|^2+\kappa)} e^{ik\cdot x}\, d\mu_{k,\lambda}^1(\kappa)(x,\omega)\,d^*k.
\end{eqnarray}
For $\lambda=0$, we simply have $d\mu_{k,0}^1(\kappa)=d\delta_0(\kappa)$, while for $\lambda>0$ the planar wave $e_k$ is corrected into a (potentially non-atomic) \emph{Bloch measure} $e_k\,d\mu_{k,\lambda}^1(\kappa)$, which is adapted to the potential $V$.
If $\mu_{k,\lambda}^1$ admits an atom at $\kappa_*$, the function $e_k\,\mu_{k,\lambda}^1(\{\kappa_*\})\in\Ld^2(\Omega;\Ld^2_\loc(\R^d))$ is called a \emph{Bloch wave}, which is in particular a ``generalized eigenfunction'' of $\Lc_\lambda$ associated with the ``generalized eigenvalue'' $|k|^2+\kappa_*$.
In the periodic case, the measures $\mu_{k,\lambda}^1$ are all discrete and the situation is thus much simplified~\cite{Kuchment-16}.

\subsubsection{Fibered perturbation problem}\label{chap:blochth-limited}

We have seen that the Schrödinger operator $\Lc_\lambda$ on $\Ld^2(\R^d\times\Omega)$ is equivalent to the collection of all the fibered operators $\Lc_{k,\lambda}$ on $\Ld^2(\Omega)$, for $k\in\R^d$.
More precisely, the decomposition~\eqref{eq:bloch-decomp} implies that the behavior of the Schrödinger flow $u_\lambda$ is equivalent to that of all the spectral measures $\mu_{k,\lambda}^1$ associated with the constant function $1$, for $k\in\R^d$.
Since $1$ is an eigenfunction of $\Lc_{k,0}$ associated with the eigenvalue $0$, we are reduced to study the perturbation problem for this eigenvalue in the spectrum of $\Lc_{k,\lambda}$ in the regime $\lambda\ll1$.
A naïve approach consists in postulating that for $\lambda>0$ the eigenvalue $0$ (resp.\@ the eigenfunction $1$) is perturbed into an eigenvalue $\kappa_{k,\lambda}$ (resp.\@ an eigenfunction $\psi_{k,\lambda}$), and in trying to construct them via their Taylor series, that is, as the sum of the so-called Rayleigh-Schrödinger perturbation series
\begin{align}\label{eq:series-ansatz}
\kappa_{k,\lambda}=\lambda\sum_{n=0}^\infty \lambda^n\nu_{k}^n,\qquad\psi_{k,\lambda}=1+\sum_{n=1}^\infty\lambda^n\phi_k^n.
\end{align}
This can indeed be done in the periodic setting (for most $k$).
The eigenvalue equation
\begin{align}\label{eq:bloch}
\Lc_{k,\lambda}\psi_{k,\lambda}=\kappa_{k,\lambda}\psi_{k,\lambda},\qquad\kappa_{k,\lambda}\in\R,\qquad\psi_{k,\lambda}\in\Ld^2(\Omega),
\end{align}
then splits into a hierarchy of Rayleigh-Schrödinger equations for the coefficients $\nu_{k}^n\in\R$ and $\phi_k^n\in\Ld^2(\Omega)$.
In line with the wording in~\cite{BG-16} related to the homogenization theory, we refer to the $\phi_k^n$'s as the correctors.
This approach however quickly fails: for quasiperiodic $V$ the coefficients $(\nu_{k}^n,\phi_k^n)$ can be constructed from the Rayleigh-Schrödinger equations but the series~\eqref{eq:series-ansatz} is not summable, while for random $V$ the correctors $\phi_k^n$ cannot even be defined in $\Ld^2(\Omega)$.
This is related to the spectral discussion in Section~\ref{sec:spectral}: for quasiperiodic $V$ dense crossings of eigenvalues are expected to destroy analyticity of the branch $\lambda\mapsto(\kappa_{k,\lambda},\psi_{k,\lambda})$, while for random $V$ no such branch should even exist.

\subsection{Approximate Bloch waves}

While solving the eigenvalue problem~\eqref{eq:bloch} beyond the periodic setting is very difficult or impossible, we may at least construct approximate solutions of~\eqref{eq:bloch}, that is, approximate Bloch waves, in the small disorder regime $\lambda\ll1$.
In the quasiperiodic setting, since the Rayleigh-Schrödinger coefficients $(\nu_k^n,\phi_k^n)$ can all be constructed, we view~\eqref{eq:series-ansatz} as an asymptotic series describing a likely existing branch, and we define approximate Bloch waves as the partial sums of this formal series,
\[\kappa_{k,\lambda}^\ell:=\,\lambda\sum_{n=0}^\ell\lambda^n\nu_{k}^n,\qquad\,\psi_{k,\lambda}^\ell\,:=\,\sum_{n=0}^\ell\lambda^n\phi_{k}^n.\]
Such truncated Rayleigh-Schrödinger series are referred to in the sequel as {\it Taylor-Bloch waves}.
In the random case, as the correctors $\phi_k^n$ are not defined in $\Ld^2(\Omega)$, we would further need to regularize the Rayleigh-Schrödinger equations to ensure the existence of solutions in the desired space.
We focus here on the quasiperiodic setting, for which all the Rayleigh-Schrödinger coefficients $(\nu_k^n,\phi_k^n)$ can be constructed and define the jet of a formal branch $\lambda\mapsto(\kappa_{k,\lambda},\psi_{k,\lambda})$ at $\lambda=0$.
The admissible set $O$ below will soon be taken as the non-resonant set $\Oc$ defined in~\eqref{def:non-resonant}.
\begin{defin}\label{def:taylor-waves}
Given $1\le\ell<\infty$ and a nonempty open set $O\subset\R^d$, a family $(\nu_k^n,\phi_k^n:k\in O,\,0\le n\le\ell)\subset\Ld^2(\Omega)\times\R$ is called a \emph{field of $\ell$-jets of Bloch waves} if
\begin{enumerate}[(i)]
\item for all $n$, the map $O\to\Ld^2(\Omega)\times\R:k\mapsto(\nu_k^n,\phi_k^n)$ is continuous;
\item $\nu_k^n:=\expec{ V\phi_k^{n}}$ for all $n\ge0$;
\item for all $k\in O$, we have $\phi_k^0\equiv 1$, and for all $n$ the function $\phi_k^{n+1}$ satisfies $\expec{\phi_k^{n+1}}=0$ and
\begin{align}\label{eq:jetbloch}
-\triangle_{k}\phi_{k}^{n+1}=-\Pi V\phi_{k}^{n}+\sum_{l=0}^{n-1}\expecm{V\phi_{k}^{l}}\phi_k^{n-l},
\end{align}
where $\Pi$ denotes the orthogonal projection onto $\{1\}^\bot$, that is $\Pi f:=f-\expec{f}$.
\end{enumerate}
The corresponding family $(\kappa_{k,\lambda}^\ell,\psi_{k,\lambda}^\ell:k\in O,\,\lambda\ge0)$ of partial sums,
\begin{equation}\label{e.def:TBW}
\kappa_{k,\lambda}^{\ell}:=\lambda\,\expecm{V\psi_{k,\lambda}^{\ell}}=\lambda\sum_{n=0}^\ell\lambda^n\nu_k^n,\qquad \psi_{k,\lambda}^{\ell}:=\sum_{n=0}^{\ell}\lambda^{n}\phi_{k}^{n},
\end{equation}
is then called the \emph{sheet of Taylor-Bloch waves} of order $\ell$. Note that $\nu_k^0=\expec{V}$.
In the statement of Theorem~\ref{th:main-quasi}, we further use the short-hand notation $\tilde \kappa_{k,\lambda}^\ell$ and $\tilde\nu_k$ for the extension of $\kappa_{k,\lambda}^\ell$ and $\nu_k$ by zero outside $O$.
\end{defin}
As the following shows, for small $\lambda$, these Taylor-Bloch waves almost satisfy the eigenvalue equation~\eqref{eq:bloch}.
\begin{lem}\label{lem:eqnssumcorr}
Let $\ell\ge1$, let $(\nu_k^n,\phi_k^n)_{k,n}$ be a field of $\ell$-jets of Bloch waves, and let $(\kappa_{k,\lambda}^\ell,\psi_{k,\lambda}^\ell)_{k,\lambda}$ be the corresponding sheet of Taylor-Bloch waves. Then we have
\[(-\triangle_k+\lambda V)\psi_{k,\lambda}^\ell=\kappa_{k,\lambda}^\ell\psi_{k,\lambda}^\ell+\lambda^{\ell+1}\df^\ell_{k,\lambda},\]
in terms of the Taylor-Bloch eigendefect
\[\mathfrak{d}_{k,\lambda}^\ell:=\Big(\Pi V\phi_k^\ell-\sum_{l=0}^{\ell-1}\nu_k^l\phi_k^{\ell-l}\Big)-\lambda\sum_{n=1}^\ell\sum_{l=\ell-n}^{\ell-1}\lambda^{n+l-\ell}\nu_k^{l+1}\phi_k^n.\qedhere\]
\end{lem}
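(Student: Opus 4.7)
The plan is to substitute the Taylor-Bloch ansatz directly into the perturbed fibered operator, collect powers of $\lambda$, and use the Rayleigh-Schr\"odinger hierarchy~\eqref{eq:jetbloch} to cancel all contributions of orders $\lambda^1, \ldots, \lambda^\ell$. The argument is essentially algebraic bookkeeping; the two places that need attention are turning the projector $\Pi$ into the bare multiplication by $V$, and reindexing the high-degree Cauchy tail of the product $\kappa_{k,\lambda}^\ell \psi_{k,\lambda}^\ell$ into the form stated for $\mathfrak{d}_{k,\lambda}^\ell$.

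First, I apply $-\triangle_k$ to $\psi_{k,\lambda}^\ell = \sum_{n=0}^\ell \lambda^n \phi_k^n$. The $n=0$ term drops since $\phi_k^0 \equiv 1$; for $n\ge1$, using~\eqref{eq:jetbloch} with $n$ replaced by $n-1$, the identity $\Pi V\phi_k^{n-1} = V\phi_k^{n-1} - \nu_k^{n-1}$, and the absorption $\nu_k^{n-1} = \nu_k^{n-1}\phi_k^0$ into the corrector sum, I get $-\triangle_k \phi_k^n = -V\phi_k^{n-1} + \sum_{l=0}^{n-1} \nu_k^l\,\phi_k^{n-1-l}$. Adding $\lambda V\psi_{k,\lambda}^\ell = \sum_{n=1}^{\ell+1}\lambda^n V\phi_k^{n-1}$ telescopes the $V\phi_k^{n-1}$-contributions, leaving only the top one, and I obtain $(-\triangle_k + \lambda V)\psi_{k,\lambda}^\ell = \sum_{n=1}^\ell \lambda^n \sum_{l=0}^{n-1}\nu_k^l\,\phi_k^{n-1-l} + \lambda^{\ell+1}V\phi_k^\ell$.

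Next, I expand the Cauchy product $\kappa_{k,\lambda}^\ell\psi_{k,\lambda}^\ell = \sum_{p,q=0}^\ell \lambda^{p+q+1}\nu_k^p \phi_k^q$ and group by total degree $N:=p+q+1\in\{1,\ldots,2\ell+1\}$. The slice $1\le N\le\ell$ reproduces exactly the double sum just obtained, so all orders $\lambda^1,\ldots,\lambda^\ell$ cancel in $(-\triangle_k+\lambda V)\psi_{k,\lambda}^\ell - \kappa_{k,\lambda}^\ell\psi_{k,\lambda}^\ell$. What remains is the $N=\ell+1$ slice $\lambda^{\ell+1}\bigl(V\phi_k^\ell - \sum_{p=0}^\ell \nu_k^p \phi_k^{\ell-p}\bigr)$ minus the higher-degree tail $\sum_{N=\ell+2}^{2\ell+1}\lambda^N \sum_{p=N-1-\ell}^{\ell}\nu_k^p \phi_k^{N-1-p}$. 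Using $\nu_k^\ell = \expec{V\phi_k^\ell}$ to recombine the $p=\ell$ term with $V\phi_k^\ell$, the first piece simplifies to $\lambda^{\ell+1}\bigl(\Pi V\phi_k^\ell - \sum_{p=0}^{\ell-1}\nu_k^p \phi_k^{\ell-p}\bigr)$, which is $\lambda^{\ell+1}$ times the ``polynomial part'' of $\mathfrak{d}_{k,\lambda}^\ell$.

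It then remains to identify the tail, for which I relabel $p=l+1$ and $q=N-1-p$, letting $q$ play the role of the outer summation index called $n$ in the stated formula. The total exponent becomes $N=p+q+1=\ell+1+(1+n+l-\ell)$, so factoring $\lambda^{\ell+1}$ leaves precisely $\lambda\cdot\lambda^{n+l-\ell}$; and the constraints $N\in\{\ell+2,\ldots,2\ell+1\}$, $p\in\{N-1-\ell,\ldots,\ell\}$ translate to $n\in\{1,\ldots,\ell\}$, $l\in\{\ell-n,\ldots,\ell-1\}$, matching the stated form. The only subtle point is that the letter $n$ plays two distinct roles (local summation index in the computation versus global summation index in the stated defect), so some care in the notational collision is needed; once that is tracked, no genuine obstacle remains and the identification is automatic.
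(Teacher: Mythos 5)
Your proof is correct and follows essentially the same route as the paper's: apply the Rayleigh-Schr\"odinger hierarchy to compute $-\triangle_k\psi_{k,\lambda}^\ell$, telescope the $V\phi_k^{n-1}$-terms against $\lambda V\psi_{k,\lambda}^\ell$, compare the Cauchy product $\kappa_{k,\lambda}^\ell\psi_{k,\lambda}^\ell$ slice by slice in powers of $\lambda$, and reindex the high-degree tail. The bookkeeping steps (absorbing $\nu_k^{n-1}\phi_k^0$ into the sum, recombining the $p=\ell$ term into $\Pi V\phi_k^\ell$, and the change of variables $p=l+1$, $q=n$ in the tail) all check out.
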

\begin{proof}
The proof is elementary and follows from several resummations:
\begin{eqnarray*}
\lefteqn{\Lc_{k,\lambda}\psi_{k,\lambda}^\ell
\,=\,\Lc_{k,\lambda}\sum_{n=0}^\ell\lambda^n\phi_k^n}
\\
&=&-\sum_{n=1}^\ell\lambda^n(V\phi_k^{n-1}-\expec{ V\phi_k^{n-1}})+\sum_{n=2}^\ell\lambda^n\sum_{l=0}^{n-2}\phi_k^{n-l-1}\expecm{V\phi_k^l}+V\sum_{n=0}^\ell\lambda^{n+1}\phi_k^n\\
&=&\lambda\sum_{n=0}^{\ell-1}\lambda^n\sum_{l=0}^{n}\nu_k^{l}\phi_k^{n-l}+\lambda^{\ell+1}V\phi_k^\ell\\
&=&\kappa_{k,\lambda}^{\ell}\psi_{k,\lambda}^\ell+\lambda^{\ell+1}\Big(V\phi_k^\ell-\expecm{ V\phi_k^\ell}-\sum_{l=0}^{\ell-1}\nu_k^l\phi_k^{\ell-l}\Big)-\lambda^{\ell+2}\sum_{n=1}^\ell\sum_{l=\ell-n}^{\ell-1}\lambda^{n+l-\ell}\nu_k^{l+1}\phi_k^n,
\end{eqnarray*}
as claimed.
\end{proof}

\subsection{Tree formulas for Rayleigh-Schr\"odinger coefficients}\label{app:sol-nonlin-rec}

For later purposes, we now explicitly solve the nonlinear recurrence equations~\eqref{eq:jetbloch} for the Rayleigh-Schrödinger coefficients $(\nu_k^n,\phi_k^n)$.
Indeed, while solving~\eqref{eq:jetbloch} would naïvely lead to a sum of $C^{n^2}$ terms, the formula below only involves~$C^n$ terms (cf.~exponential number of trees).
This is crucial to prove sharp corrector estimates in Proposition~\ref{prop:cor-QP}, which are key to the stretched exponential timescale of the main results.
Although we believe that the formulas below could be obtained by a careful counting and recombination of the many terms occurring when solving the nonlinear recurrence~\eqref{eq:jetbloch}, we rather display a shorter indirect argument based on a Lagrange series expansion.

\begin{prop}\label{prop:sol-nonlin-rec}
Let $V$ be stationary on $(\Omega,\mathbb P)$ and assume that $V\in\Ld^p(\Omega)$ for all $p<\infty$.
For $m\ge1$ let $\Tc_m\subset\N^m$ denote the set of rooted $m$-trees, which we define as the following set of indices (cf.~Figure~\ref{fig:tree}),
\begin{align}\label{eq:rooted-trees}
\Tc_m:=\big\{a=(a_1,\ldots,a_m)\in\N^m\,:\,a_j+\ldots+a_m\le m-j,\,\forall 1\le j\le m\big\}.
\end{align}
Note that $\sharp\Tc_m\le4^m$.
 If $(\nu_k^n,\phi_k^n)_{k,n}$ is a field of jets of Bloch waves as in Definition~\ref{def:taylor-waves},
 then we have for all $n\ge0$,
\begin{multline*}
\nu_{k}^n=\sum_{m=1}^{n+1}(-1)^{n+1-m}\sum_{a\in\Tc_m}\sum_{c\in\N^m\atop|c|=n+1-m}\sum_{b^1\in\N^{c_1}\atop|b^1|=a_1}~\ldots~\sum_{b^m\in\N^{c_m}\atop|b^m|=a_m}\\
\times\,\expecm{V(-\triangle_k)^{-b_1^1-1}\Pi V\ldots(-\triangle_k)^{-b_{c_1}^1-1}\Pi V}\\
\times\ldots\,\expecm{V(-\triangle_k)^{-b_1^m-1}\Pi V\ldots(-\triangle_k)^{-b_{c_m}^m-1}\Pi V},
\end{multline*}
and for all $n\ge1$,
\[\phi_{k}^n=\sum_{m=1}^n(-1)^{m}\sum_{\ell=0}^{n-m}\sum_{a\in\N^\ell\atop|a|=n-m-\ell}\sum_{b\in\N^m\atop|b|=\ell}\nu_{k}^{a_1}\ldots\nu_{k}^{a_\ell}\,(-\triangle_k)^{-b_1-1}\Pi V\ldots (-\triangle_k)^{-b_m-1}\Pi V,\]
assuming that all the terms make sense in $\Ld^2(\Omega)$ (in fact, whenever $k$ belongs to the non-resonant set $\Oc$, cf.~\eqref{def:non-resonant}).
\end{prop}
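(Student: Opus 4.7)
The approach is to reformulate the recursive hierarchy~\eqref{eq:jetbloch} as a single functional fixed-point identity at the level of formal power series in $\lambda$, solve its linear part in closed form, and then apply a Lagrange-type expansion to the resulting scalar implicit equation. This bypasses the combinatorial $C^{n^2}$ explosion of a brute-force iteration of~\eqref{eq:jetbloch} and produces a sum indexed by rooted plane trees with only $C^n$ terms. Concretely, I introduce the formal series $\psi_k(\lambda)=\sum_{n\ge 0}\lambda^n\phi_k^n=1+\tilde\psi_k(\lambda)$ with $\tilde\psi_k\in\{1\}^\perp$, and $\tilde\kappa_k(\lambda)=\sum_{n\ge 0}\lambda^n\nu_k^n$; summing~\eqref{eq:jetbloch} weighted by $\lambda^{n+1}$ and projecting separately by $\Pi$ and by $\expec{\cdot}$ shows that the Rayleigh-Schr\"odinger hierarchy is formally equivalent to
\[(-\triangle_k+\lambda V)\psi_k\,=\,\lambda\tilde\kappa_k\psi_k,\qquad\tilde\kappa_k\,=\,\expec{V\psi_k}.\]

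Next, I freeze $\tilde\kappa_k$ as a scalar parameter $\kappa$ and set $G_k:=(-\triangle_k)^{-1}\Pi$, which is bounded on $\Ld^2(\Omega)$ for $k\in\Oc$. Projecting the first identity above by $\Pi$ and applying $(-\triangle_k)^{-1}$ yields the fixed-point relation $\tilde\psi_k=-\lambda G_kV\psi_k+\lambda\kappa G_k\tilde\psi_k$. Expanding $(I+\lambda G_kV-\lambda\kappa G_k)^{-1}$ as a Neumann series and regrouping each operator word in the two letters $A:=\lambda G_kV$ and $B:=\lambda\kappa G_k$ according to its runs of $B$'s separated by $A$'s, then applying to the constant function~$1$ and using $G_k^j=(-\triangle_k)^{-j}\Pi$, each such word $B^{b_1}AB^{b_2}A\cdots B^{b_m}A$ collapses to $(-1)^m\lambda^{m+|b|}\kappa^{|b|}(-\triangle_k)^{-b_1-1}\Pi V\cdots(-\triangle_k)^{-b_m-1}\Pi V$, yielding
\[\tilde\psi_k(\lambda)\,=\,\sum_{m\ge 1}(-1)^m\sum_{b\in\N^m}\lambda^{m+|b|}\kappa^{|b|}(-\triangle_k)^{-b_1-1}\Pi V\cdots(-\triangle_k)^{-b_m-1}\Pi V.\]
Substituting $\kappa=\tilde\kappa_k=\sum_p\lambda^p\nu_k^p$, expanding $\kappa^{|b|}$ multinomially over $a\in\N^{|b|}$, and matching coefficients of $\lambda^n$ delivers the announced formula for $\phi_k^n$.

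For the $\nu_k^n$ formula, applying $\expec{V\cdot}$ to the closed form above and using the self-consistency $\tilde\kappa_k=\expec{V\psi_k}$ yields, with $E_m(b):=\expec{V(-\triangle_k)^{-b_1-1}\Pi V\cdots(-\triangle_k)^{-b_m-1}\Pi V}$, the scalar implicit equation
\[\tilde\kappa_k\,=\,\expec{V}+\sum_{m\ge 1}(-1)^m\sum_{b\in\N^m}\lambda^{m+|b|}\,\tilde\kappa_k^{|b|}\,E_m(b),\]
which is of the form $\tilde\kappa_k=F(\lambda,\tilde\kappa_k)$ tailor-made for Lagrange inversion. Iterating the substitution $\tilde\kappa_k\leftarrow F(\lambda,\tilde\kappa_k)$, each replacement of a factor $\tilde\kappa_k$ creates new children in a growing rooted plane tree, with each vertex~$i$ carrying local data $(c_i,b^i)$ such that its local expectation equals $E_{c_i}(b^i)$ and $|b^i|=a_i$ is its number of children. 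Encoding each such tree by the depth-first sequence $a=(a_1,\ldots,a_m)$ of children-counts identifies the index set with $\Tc_m$ from~\eqref{eq:rooted-trees} and yields the Catalan-type bound $\#\Tc_m\le 4^m$. The $\lambda$-power count across vertices enforces $|c|=n+1-m$, and the product of per-vertex signs $(-1)^{c_i}$ recombines to the global prefactor $(-1)^{n+1-m}$, giving exactly the announced formula.

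The first two reformulations are mechanical; the delicate work lies in the tree resummation. One must check that the iterated substitution is in bijection with the Lukasiewicz parametrization $\Tc_m$ of rooted plane trees (no over-counting), and that the local signs, $\lambda$-exponents, and multinomial weights recombine cleanly into the global sign $(-1)^{n+1-m}$ and the constraints $|c|=n+1-m$, $|b^i|=a_i$. It is precisely this tree encoding---as opposed to the na\"ive enumeration of the many terms arising from brute-force iteration of~\eqref{eq:jetbloch}---that yields the sharp $\#\Tc_m\le 4^m$ bound, which is the whole point of the proposition and is the key to the subsequent corrector estimates in Proposition~\ref{prop:cor-QP}. The functional-analytic subtleties (invertibility of $-\triangle_k$ on $\{1\}^\perp$ for $k\in\Oc$, and $\Ld^p(\Omega)$-boundedness of each factor) are routine under the standing assumption $V\in\Ld^p(\Omega)$ for all $p<\infty$.
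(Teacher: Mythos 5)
Your proposal is correct and follows the same overall strategy as the paper: reformulate the Rayleigh--Schr\"odinger hierarchy~\eqref{eq:jetbloch} as a fixed-point problem for the eigenvalue, perform a double Neumann expansion (in the perturbation $\lambda V$ and in the eigenvalue shift $\kappa$) to collapse the $C^{n^2}$ combinatorics, and then close the self-consistency for $\kappa$ by a Lagrange-type tree inversion indexed by the Lukasiewicz set $\Tc_m$. The one substantive difference is procedural: you work entirely at the level of formal power series in $\lambda$ (which is legitimate, since $\partial_\kappa F=O(\lambda^2)$ makes the iterated substitution stabilize order by order, and each coefficient lies in $\Ld^2(\Omega)$ for $k\in\Oc$), whereas the paper first reduces to $V\in\Ld^\infty(\Omega)$ by density and then replaces $-\triangle_k$ by the regularized $-i\mu-\triangle_k$ so that the Neumann and Lagrange series are genuinely convergent for $\lambda\ll\mu^2\wedge 1$, identifies coefficients of locally convergent power series, and finally lets $\mu\downarrow 0$. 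The regularized route buys a verification that bypasses any appeal to formal-power-series algebra over an operator ring (and also sets up the machinery reused in the companion random case, where $\mu>0$ is genuinely needed); your formal route is shorter here but shifts the burden onto checking that formal Neumann inversion and Lagrange iteration are valid term by term, which you correctly flag as the delicate point. The sign and exponent bookkeeping in your outline ($(-1)^m$ for $\phi_k^n$, $|c|=n+1-m$ and $(-1)^{n+1-m}$ for $\nu_k^n$) matches the paper's formulas.
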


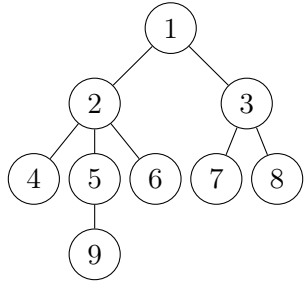
\begin{figure}
{\begin{center}
\begin{tikzpicture}[scale=1,level distance=1cm,
level 1/.style={sibling distance=2cm},
level 2/.style={sibling distance=0.8cm},
level 3/.style={sibling distance=1cm}]
\node[circle,draw]{1}
   child{node[circle,draw]{2}
      child{node[circle,draw]{4}}
      child{node[circle,draw]{5}
         child{node[circle,draw]{9}}}
      child{node[circle,draw]{6}}}
   child{node[circle,draw]{3}
      child{node[circle,draw]{7}}
      child{node[circle,draw]{8}}};
\end{tikzpicture}
\caption{\label{fig:tree}
Given a rooted plane tree, we let the vertices be labelled from the left to the right, from the top to the bottom. A tree of order $m$ then uniquely defines an element $a\in\Tc_m$ by defining $a_j$ as the number of children of the $j$th vertex. For instance, the above plotted tree corresponds to the element $a=(2,3,2,0,1,0,0,0,0)\in\Tc_{9}$.}
\end{center}}
\end{figure}

\begin{proof}
The cardinality $\sharp\Tc_m$ is obviously bounded by the number of ways to put $m-1$ unlabelled balls in $m$ labelled boxes, that is, by $\binom{2(m-1)}{m-1}\le 4^m$.
By a density argument, we may assume that the potential $V$ belongs to $\Ld^\infty(\Omega)$. By an approximation argument, it further suffices to establish the result for {\it regularized} Rayleigh-Schrödinger coefficients: for $\mu>0$ we define $(\nu_{k,\mu}^n,\phi_{k,\mu}^n)$ as follows,
\begin{enumerate}[(i)]
\item $\nu_{k,\mu}^n:=\expecm{ V\phi_{k,\mu}^{n}}$ for all $n\ge0$;
\item  $\phi_{k,\mu}^0\equiv 1$, and for all $n\ge0$ the function $\phi_{k,\mu}^{n+1}$ satisfies $\expecm{\phi_{k,\mu}^{n+1}}=0$ and
\begin{align}\label{eq:jetbloch-reg}
(-i\mu-\triangle_{k})\phi_{k,\mu}^{n+1}=-\Pi V\phi_{k,\mu}^{n}+\sum_{l=0}^{n-1}\expecm{V\phi_{k,\mu}^{l}}\phi_{k,\mu}^{n-l}.
\end{align}
\end{enumerate}
We then define the regularized Bloch waves
\begin{equation}\label{e.def:regTBW} 
\kappa_{k,\lambda,\mu}:=\lambda\expecm{ V\psi_{k,\lambda,\mu}}=\lambda\sum_{n=0}^\infty \lambda^n\nu_{k,\mu}^n,\qquad \psi_{k,\lambda,\mu}:=\sum_{n=0}^{\infty}\lambda^{n}\phi_{k,\mu}^{n},
\end{equation}
where the series indeed converge for $\lambda$ small enough (for fixed $\mu>0$)
and satisfy
\begin{align}\label{eq:reg-eigen}
(-i\mu-\triangle_k+\lambda V)\psi_{k,\lambda,\mu}=\kappa_{k,\lambda,\mu}\psi_{k,\lambda,\mu}-i\mu,\qquad \expec{\psi_{k,\lambda,\mu}}=1.
\end{align}
Note that the Rayleigh-Schrödinger coefficients $(\nu_k^n,\phi_{k}^n)$ are clearly retrieved in the limit $\mu\downarrow 0$, so that it suffices to establish the statement of the proposition for $(\nu_{k}^n,\phi_{k}^n)$  and $-\triangle_k$ replaced by $(\nu_{k,\mu}^n,\phi_{k,\mu}^n)$ and $-i\mu-\triangle_k$.
The strategy of the proof is as follows: we establish another explicit series expansion for $\psi_{k,\lambda,\mu}$ in the restricted regime $\lambda\ll \mu^2\wedge1$ using a fixed point argument together with Lagrange and Neumann expansions, and then we identify the coefficients with $(\nu_{k,\mu}^n,\phi_{k,\mu}^n)$ in~\eqref{e.def:regTBW}.
Since coefficients are independent of~$\lambda$, the restriction $\lambda\ll \mu^2\wedge1$ in the proof is naturally irrelevant for the validity of these new formulas for $(\nu_{k,\mu}^n,\phi_{k,\mu}^n)$, and the corresponding formulas for $(\nu_{k}^n,\phi_{k}^n)$ follow for $\mu\downarrow0$.

Applying the projectors $1-\Pi=\expec{\cdot}$ and $\Pi$, we deduce that the equation \eqref{eq:reg-eigen} is equivalent to
\begin{align}\label{eq:reg-eigen-alter}
\kappa_{k,\lambda,\mu}=\lambda\,\expecm{V\psi_{k,\lambda,\mu}},\qquad \big(-i\mu-\kappa_{k,\lambda,\mu}-\triangle_k+\lambda \Pi V\big)(\psi_{k,\lambda,\mu}-1)=-\lambda \Pi V.
\end{align}
Provided that $\Im(i\mu+\kappa_{k,\lambda,\mu})\ne0$, the $\mu$-regularized eigenvalue $\kappa_{k,\lambda,\mu}$ is therefore a solution of the following fixed-point problem,
\begin{gather*}
\kappa_{k,\lambda,\mu}=G_{k,\lambda,\mu}(\kappa_{k,\lambda,\mu}),\\
G_{k,\lambda,\mu}(\kappa):=\lambda\,\expec{V}-\lambda^2\,\expecm{V(-i\mu-\kappa-\triangle_k+\lambda \Pi V)^{-1}\Pi V}.
\end{gather*}
Provided that $\Im(i\mu+\kappa)\ne0$, the quantity $G_{k,\lambda,\mu}(\kappa)$ can be rewritten in form of a Neumann series for all $\lambda>0$ small enough,
\begin{eqnarray*}
G_{k,\lambda,\mu}(\kappa)&=&\lambda\,\expecm{V}-\lambda^2\,\expec{V\big(1+\lambda (-i\mu-\kappa-\triangle_k)^{-1}\Pi V\big)^{-1}(-i\mu-\kappa-\triangle_k)^{-1}\Pi V}\\
&=&\lambda\,\expecm{V}-\lambda^2\sum_{n=0}^\infty(-\lambda)^n\,\expec{V\big(\Gamma_{k,\mu}(\kappa)\Pi V\big)^{n+1}1},
\end{eqnarray*}
where we use the shorthand notation $\Gamma_{k,\mu}(\kappa):=(-i\mu-\kappa-\triangle_k)^{-1}$.
For $|\kappa|<\mu$, we may further use the Neumann series
\[\Gamma_{k,\mu}(\kappa)=\big(1-\kappa\Gamma_{k,\mu}(0)\big)^{-1}\Gamma_{k,\mu}(0)=\sum_{n=0}^\infty\kappa^n\Gamma_{k,\mu}(0)^{n+1}.\]
Injecting this into the above then leads to the following series expansion for $G_{k,\lambda,\mu}$: given $|\kappa|<\mu$, we obtain for all $\lambda>0$ small enough,
\begin{eqnarray*}
\lefteqn{G_{k,\lambda,\mu}(\kappa)}\\
&=&\lambda\,\expecm{V}-\sum_{n=1}^\infty(-\lambda)^{n+1}\sum_{a_1,\ldots,a_{n}=0}^\infty\kappa^{a_1+\ldots+a_{n}}\,\expec{V\Gamma_{k,\mu}(0)^{a_1+1}\Pi V\ldots\Gamma_{k,\mu}(0)^{a_{n}+1}\Pi V}
\\
&=&\sum_{\ell=0}^\infty\kappa^\ell G_{k,\lambda,\mu;\ell},
\end{eqnarray*}
where we have set for all $\ell\in\N$,
\[G_{k,\lambda,\mu;\ell}:=-\sum_{n=0}^\infty(-\lambda)^{n+1}\sum_{a\in\N^n\atop|a|=\ell}\expec{V\Gamma_{k,\mu}(0)^{a_1+1}\Pi V\ldots\Gamma_{k,\mu}(0)^{a_n+1}\Pi V}.\]
(Note that we use here the natural convention for empty sums and products: for $\ell=0$ the sum takes the form
\[-\sum_{n=0}^\infty(-\lambda)^{n+1}\,\expec{V(\Gamma_{k,\mu}(0)\Pi V)^{n}1},\]
while for $\ell\ge1$ the sum is reduced to $n\ge1$.)
We are now in position to solve the fixed-point equation $\kappa=G_{k,\lambda,\mu}(\kappa)$ in the form of a Lagrange series expansion as e.g.\@ in~\cite{DSV-97}.
By a simple tree-counting argument, the unique solution $\kappa_{k,\lambda,\mu}$ for $\lambda$ small enough can be expressed as the following sum on all possible trees of all sizes,
\begin{align}\label{eq:formal-sol-lambda0}
\kappa_{k,\lambda,\mu}=\sum_{m=1}^\infty\sum_{a\in\Tc_m}G_{k,\lambda,\mu;a_1}\ldots G_{k,\lambda,\mu;a_m},
\end{align}
or alternatively,
$$
\kappa_{k,\lambda,\mu}=\sum_{n=1}^\infty\lambda^n\kappa_{k,\mu;n},
$$
where we have defined
\begin{multline}\label{eq:formal-sol-lambda}
\kappa_{k,\mu;n}:=\sum_{m=1}^{n}(-1)^{n+m}\sum_{a\in\Tc_m}\sum_{c\in\N^m\atop|c|=n-m}\sum_{b^1\in\N^{c_1}\atop|b^1|=a_1}~\ldots~\sum_{b^m\in\N^{c_m}\atop|b^m|=a_m}\\
\times\,\expecm{V\Gamma_{k,\mu}(0)^{b_1^1+1}\Pi V\ldots\Gamma_{k,\mu}(0)^{b_{c_1}^1+1}\Pi V}\\
\times\ldots\,\expecm{V\Gamma_{k,\mu}(0)^{b_1^m+1}\Pi V\ldots\Gamma_{k,\mu}(0)^{b_{c_m}^m+1}\Pi V},
\end{multline}
provided that the above power series is absolutely convergent and satisfies $|\kappa_{k,\lambda,\mu}|<\mu$.
Let us quickly check that these two conditions are indeed satisfied for $\lambda>0$ small enough.
For that purpose, we make use of the following coarse estimate,
\begin{align*}
\big|\expecm{V\Gamma_{k,\mu}(0)^{b_1+1}\Pi V\ldots\Gamma_{k,\mu}(0)^{b_{m}+1}\Pi V}\big|\,\le\,\|V\|_{\Ld^\infty}^{m+1}\mu^{-(b_1+\ldots+b_m)-m},
\end{align*}
which indeed yields for all $n\ge1$,
\begin{eqnarray*}
|\kappa_{k,\mu;n}|&\le&\sum_{m=1}^{n}\sum_{a\in\Tc_m}\sum_{c_1,\ldots,c_m=0}^\infty\!\!\mathds1_{c_1+\ldots+c_m=n-m}\!\!\\
&&\hspace{2cm}\times\sum_{b^1\in\N^{c_1}\atop|b^1|=a_1}\!\!\ldots\!\!\sum_{b^m\in\N^{c_m}\atop|b^m|=a_m}\!\!\|V\|_{\Ld^\infty}^{m+c_1+\ldots+c_m}\mu^{-(|b^1|+\ldots+|b^m|)-(c_1+\ldots+c_m)}\\
&\le&\|V\|_{\Ld^\infty}^n\sum_{m=1}^{n}\sum_{a\in\Tc_m}\sum_{c_1,\ldots,c_m=0}^\infty\mathds1_{c_1+\ldots+c_m=n-m}\\
&&\hspace{2cm}\times\binom{a_1+c_1-1}{c_1-1}\ldots\binom{a_m+c_m-1}{c_m-1} \mu^{m-n-(a_1+\ldots+a_m)}\\
&\le&\|V\|_{\Ld^\infty}^n\sum_{m=1}^{n}\sum_{j=0}^{m-1}\mu^{m-n-j}\mathcal I_{m,n,j},
\end{eqnarray*}
where we have set
\[\mathcal I_{m,n,j}\,:=\,\sum_{a_1,\ldots,a_m=0}^\infty~\sum_{c_1,\ldots,c_m=0}^\infty\mathds1_{a_1+\ldots+a_m=j\atop c_1+\ldots+c_m=n-m}\binom{a_1+c_1-1}{c_1-1}\ldots\binom{a_m+c_m-1}{c_m-1}.\]
(Here we take the convention $\binom{-1}{-1}=1$ and $\binom{s}{-1}=0$ for $s\ge0$.) Using the coarse bound $\Ic_{m,n,j}\le2^{2(n+j-1)-m}$, we deduce
\[|\kappa_{k,\mu;n}|\,\le\,\|V\|_{\Ld^\infty}^n2^{3n}(\mu\wedge1)^{1-n},\]
which leads to
\begin{equation*}
|\kappa_{k,\lambda,\mu}|\,\le\,\sum_{n=1}^\infty\lambda^n|\kappa_{k,\mu;n}|
\,\le \,\sum_{n=1}^\infty\Big(\frac{8\lambda}{\mu\wedge1}  \|V\|_{\Ld^\infty}\Big)^n
\,< \, \mu
\end{equation*}
for $\lambda$ small enough (say $\lambda \ll \mu^2\wedge1$).

We turn to the series representation of $\psi_{k,\lambda,\mu}$, and insert the expression~\eqref{eq:formal-sol-lambda0} into~\eqref{eq:reg-eigen-alter}.
Since $|\kappa_{k,\lambda,\mu}|<\mu$, one may invert the equation for $\psi_{k,\lambda,\mu}$.
Proceeding as above using Neumann series, we obtain the following absolutely convergent expansion in $\Ld^2(\Omega)$ for $\lambda$ small enough,
\begin{eqnarray*}
\psi_{k,\lambda,\mu}&=&1+\sum_{n=1}^\infty(-\lambda)^{n}\sum_{\ell=0}^\infty\kappa_{k,\lambda,\mu}^{\ell}\sum_{a\in\N^n\atop|a|=\ell}(-i\mu-\triangle_k)^{-a_1-1}\Pi V\ldots (-i\mu-\triangle_k)^{-a_n-1}\Pi V
\\
&=&1+\sum_{n=1}^\infty\lambda^n\psi_{k,\mu;n},
\end{eqnarray*}
where we have set for all $n\ge1$
\begin{multline}\label{eq:formal-sol-psi}
\psi_{k,\mu;n}:=\sum_{m=1}^n(-1)^{m}\sum_{\ell=0}^{n-m}\sum_{a\in\N^\ell\atop|a|=n-m-\ell}\sum_{b\in\N^m\atop|b|=\ell}\\
\times\,\kappa_{k,\mu;1+a_1}\ldots\kappa_{k,\mu;1+a_\ell}\,(-i\mu-\triangle_k)^{-b_1-1}\Pi V\ldots (-i\mu-\triangle_k)^{-b_m-1}\Pi V.
\end{multline}

We are in position to conclude.
Comparing the above series representations with the Rayleigh-Schrödinger series~\eqref{e.def:regTBW} satisfying the same regularized eigenvalue equation~\eqref{eq:reg-eigen}, and identifying the powers of $\lambda$ in these (locally convergent) series, we deduce for all $n\ge1$,
\[\psi_{k,\mu;n}=\phi_{k,\mu}^n,\qquad\kappa_{k,\mu;n}=\nu_{k,\mu}^{n-1}.\]
Passing to the limit $\mu\downarrow0$ in these formulas, the conclusion follows.
\end{proof}


\section{Approximate Bloch waves in the quasiperiodic setting}\label{chap:QP}

In this section, we establish fine bounds on the Rayleigh-Schrödinger coefficients $(\nu_k^n,\phi_k^n)$ in the quasiperiodic setting~(QP).
As emphasized in Remark~\ref{rem:QP-res}, the quality of such bounds depends both on the regularity of the lifted map $\tilde V$ and on the algebraic properties of the winding matrix $F$. We start by introducing a suitable form of a Diophantine condition for~$F$, adapted to the fibered structure.

\subsection{Diophantine condition}\label{sec:Dioph}
The key ingredient to estimate the Rayleigh-Schrödinger coefficients in the quasiperiodic setting is the inversion of the fibered Laplacians~$-\triangle_k$, $k\in\R^d$.
The Fourier symbol of $-\triangle_k$ is given by $\xi\mapsto|F\xi+k|^2-|k|^2$ on $\Z^M$ and can in general vanish for $\xi\ne0$, which prohibits any invertibility.
In order to ensure invertibility, we must obviously assume that $F$ is irrational (that is, $F\xi\ne0$ for all $\xi\in\Z^M\setminus\{0\}$) and we must also restrict to values of $k$ away from the so-called diffraction hyperplanes $P_\xi:=\{k'\in\R^d:|F\xi+k'|=|k'|\}$, $\xi\in\Z^M\setminus\{0\}$.
The complement of the union of all those hyperplanes constitutes the so-called {\it non-resonant set},
\begin{equation}\label{def:non-resonant}
\Oc\,:=\,\R^d\setminus\bigcup_{\xi\in\Z^M\setminus\{0\}}P_\xi,
\end{equation}
which typically has a Cantor-like structure.
To obtain precise bounds on the Rayleigh-Schrödinger coefficients, fine estimates on the inverse symbol $\xi\mapsto(|F\xi+k|^2-|k|^2)^{-1}$ are further required for non-resonant $k$. This is provided by the following when $F$ is Diophantine.
To ensure uniform bounds, we must naturally restrict to values of $k$ outside some {\it fattened} resonant set $\Rc_R$, $R\ge1$, with $\Oc_R:=\R^d\setminus\overline{\Rc_R}\,\uparrow\,\Oc$ as $R\uparrow\infty$.
\begin{lem}[Diophantine condition]\label{lem:dioph}
Assume that the winding matrix $F\in\R^{d\times M}$ satisfies for some $r_0>0$
the  Diophantine condition
\begin{align}\label{eq:classical-Dioph2}
|F\xi|\ge\frac1{C_0}|\xi|^{-r_0}\qquad\text{for all $\xi\in\Z^M\setminus\{0\}$},
\end{align}
and let $s_0>M+r_0$ be fixed.
Then there exists a decreasing collection $(\Rc_R)_{R\ge1}$ of open (resonant) subsets $\Rc_R\subset \R^d$ and there exists a constant $C>0$ (depending on $C_0,F,M,s_0$) such that the following hold for all $R>0$:
\begin{enumerate}[(i)]
\item For all $k\in\R^d\setminus\Rc_R$ and all $\xi\in\Z^M\setminus\{0\}$, we have
\begin{align}\label{eq:dioph}
\big||F\xi+k|^2-|k|^2\big|\ge R^{-1}|\xi|^{-s_0}.
\end{align}
\item For all $\kappa>0$, we have $|\Rc_R\cap B_\kappa|\le CR^{-1}|\partial B_\kappa|$.
\item We can decompose $\Rc_R=\bigcup_{n=1}^\infty\Rc_R^n$, where $(\Rc_R^n)_n$ is an increasing sequence of open subsets of $\R^d$, such that for all $n$,
\begin{itemize}
\item $\Rc_R^n$ is a finite union of regular open sets;
\item the condition~\eqref{eq:dioph} holds for all $k\in\R^d\setminus\Rc_R^n$ and $\xi\in\Z^M\setminus\{0\}$ with $|\xi|\le n$.\qedhere
\end{itemize}
\end{enumerate}
\end{lem}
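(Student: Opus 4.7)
The key algebraic identity is
\[|F\xi+k|^2-|k|^2\,=\,|F\xi|^2+2F\xi\cdot k,\]
so the forbidden set ``$\big||F\xi+k|^2-|k|^2\big|<R^{-1}|\xi|^{-s_0}$'' is, for each nonzero $\xi\in\Z^M$, an open slab $S_{R,\xi}$ of thickness $\frac{R^{-1}|\xi|^{-s_0}}{|F\xi|}$ orthogonal to $F\xi$ and centered on the diffraction hyperplane $P_\xi$. The plan is simply to define
\[\Rc_R\,:=\,\bigcup_{\xi\in\Z^M\setminus\{0\}}S_{R,\xi},\qquad \Rc_R^n\,:=\,\bigcup_{0<|\xi|\le n}S_{R,\xi},\]
so that (i) and the defining property in~(iii) hold by construction, and each $\Rc_R^n$ is a finite union of open slabs (hence of regular open sets). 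The family $(\Rc_R)_R$ is decreasing in $R$ because $S_{R',\xi}\subseteq S_{R,\xi}$ whenever $R'\ge R$, and the $\Rc_R^n$ are clearly increasing in $n$ with $\bigcup_n\Rc_R^n=\Rc_R$.

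The core estimate is~(ii). Intersecting one slab $S_{R,\xi}$ with the ball $B_\kappa$ yields a measurable set of Lebesgue volume at most (thickness)$\times$(area of an equatorial $(d-1)$-disk of radius $\kappa$), i.e.\ at most
\[\omega_{d-1}\,\kappa^{d-1}\,\frac{R^{-1}|\xi|^{-s_0}}{|F\xi|}.\]
The Diophantine hypothesis~\eqref{eq:classical-Dioph2} bounds $|F\xi|^{-1}\le C_0|\xi|^{r_0}$, so each slab contributes at most $C_0\omega_{d-1}\kappa^{d-1}R^{-1}|\xi|^{r_0-s_0}$. Summing over $\xi\in\Z^M\setminus\{0\}$ converges precisely under the standing assumption $s_0>M+r_0$, since on $\Z^M$ one has $\sum_{\xi\ne 0}|\xi|^{-(s_0-r_0)}<\infty$ iff $s_0-r_0>M$. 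The resulting inequality $|\Rc_R\cap B_\kappa|\lesssim R^{-1}\kappa^{d-1}$ is equivalent, up to a dimensional constant, to $CR^{-1}|\partial B_\kappa|$.

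There is no serious obstacle: the lemma is essentially a packaging of this elementary slab-counting argument in a form adapted to the approximate-Bloch-wave construction. The only delicate bookkeeping is the matching of the Diophantine exponent $r_0$ with the tolerated loss $s_0$ against the summability threshold $M$ of $\Z^M$; the strict inequality $s_0>M+r_0$ is dictated precisely by this summability requirement. The decomposition in~(iii) is used later to invert $-\triangle_k$ on the exhausting family $\R^d\setminus\Rc_R^n$, on which only finitely many Fourier modes $\xi$ need to be excluded, so that operator inversion and continuity in $k$ become trivial, while the global control on $\R^d\setminus\Rc_R$ follows by passing to the limit $n\uparrow\infty$.
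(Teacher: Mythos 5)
Your proposal is correct and follows essentially the same route as the paper: both proofs identify $\Rc_R(\xi)$ (your $S_{R,\xi}$) as a slab of thickness $R^{-1}|\xi|^{-s_0}|F\xi|^{-1}$ about the diffraction hyperplane $P_\xi$, bound its intersection with $B_\kappa$ by thickness times an equatorial $(d-1)$-disk, invoke the Diophantine bound $|F\xi|^{-1}\le C_0|\xi|^{r_0}$, and sum over $\xi\in\Z^M\setminus\{0\}$ using $s_0>M+r_0$; the exhaustion $\Rc_R^n=\bigcup_{0<|\xi|\le n}\Rc_R(\xi)$ for item~(iii) is identical.
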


\noindent
(In the sequel, we conveniently write $\Rc_R^t:=\Rc_R^{\ceil{t}}$ for non-integer $t\ge 0$.)

\begin{rems}\mbox{}\label{rem:per+}
\begin{enumerate}[$\bullet$]
\item \emph{Diophantine condition~\eqref{eq:classical-Dioph2}:} Given $r_0>d-1$, the standard theory of Diophantine conditions ensures that for almost every $F\in\R^{d\times M}$ there exists $C_0>0$ (depending on $F,M,r_0$) such that~\eqref{eq:classical-Dioph2} holds.
\item \emph{Property~(ii):} Two dual behaviors are included in property~(ii): on the one hand the density of the resonant set $\Rc_R$ decreases to $0$ as~$R\uparrow\infty$, and on the other hand for fixed $R$ the set $\R^d\setminus\Rc_R$ is extensive in the sense that the density of $\Rc_R$ in a ball $B_\kappa$ decreases to $0$ as $\kappa\uparrow\infty$.
As exploited in the proof of Corollary~\ref{th:main-quasi-tsp-2}, this duality precisely allows us to argue alternatively in the small disorder regime or at high frequencies.
\item \emph{Periodic setting:} Let us argue that in the periodic setting (that is, (QP) with $M=d$ and $F=\Id$) the above lemma holds with $s_0=0$. More precisely, there exists a decreasing collection $(\Rc_R)_{R\ge1}$ of regular open subsets $\Rc_R\subset\R^d$ and there exists a constant $C>0$ such that the following hold for all $R\ge1$:
\begin{enumerate}[(i')]
\item For all $k\in\R^d\setminus\Rc_R$ and all $\xi\in\Z^M\setminus\{0\}$, we have
\begin{align}\label{eq:dioph-per}
\big||\xi+k|^2-|k|^2\big|\ge R^{-1}.
\end{align}
\item For all $\kappa>0$, we have $|\Rc_R\cap B_\kappa|\le C_\kappa R^{-1}$.\footnote{Similarly as in Lemma~\ref{lem:dioph}, the factor $C_\kappa$ could  be improved into $C|\partial B_\kappa|$ if we replace the RHS $R^{-1}$ in~\eqref{eq:dioph-per} by $R^{-1}|\xi|^{-s_0}$ with $s_0>d-1$.}
\end{enumerate}
This is obtained as a direct adaptation of the proof below, noting that the diffraction hyperplane $P_\xi=\{k'\in\R^d:|F\xi+k'|=|k'|\}$ does not intersect the ball $B_{\frac12|\xi|}$.\qedhere
\end{enumerate}
\end{rems}

\begin{proof}[Proof of Lemma~\ref{lem:dioph}]
For $R\ge1$ and $\xi\in\Z^M\setminus \{0\}$, we consider the fattened diffraction hyperplane
\begin{eqnarray*}
\Rc_R(\xi)\,:=\,\big\{k\in \R^{d}:\big||F\xi+k|^2-|k|^2\big|<R^{-1}|\xi|^{-s_0}\big\},
\end{eqnarray*}
and the fattened resonance set
\begin{align*}
\Rc_R\,:=\,\bigcup_{\xi\in\Z^M\setminus\{0\}}\Rc_R(\xi).
\end{align*}
For $\xi\in\Z^M\setminus\{0\}$, we note that the distance of a point $k\in\R^d$ to the diffraction hyperplane $P_\xi:=\{k'\in\R^d:|F\xi+k'|=|k'|\}$ is given by $\big|k\cdot\frac{F\xi}{|F\xi|}-\frac12|F\xi|\big|$, hence a point $k\in\R^d$ satisfies $||F\xi+k|^2-|k|^2|<R^{-1}|\xi|^{-s_0}$ if and only if it is at distance $<\frac12R^{-1}|F\xi|^{-1}|\xi|^{-s_0}$ from $P_\xi$. This implies
\begin{align}\label{eq:redef-RRxi}
\Rc_R(\xi)=P_\xi+B_{\frac12R^{-1}|F\xi|^{-1}|\xi|^{-s_0}},
\end{align}
and the Diophantine condition~\eqref{eq:classical-Dioph2} then allows to estimate, for all $\kappa>0$,
\[|\Rc_R(\xi)\cap B_\kappa|\,\le\,CR^{-1}\kappa^{d-1}|F\xi|^{-1}|\xi|^{-s_0}\,\le\,CR^{-1}\kappa^{d-1}|\xi|^{r_0-s_0}.\]
By a coarse union bound and the choice $s_0>M+r_0$, this yields
\begin{eqnarray*}
|\Rc_R\cap B_\kappa|\,\le\,\sum_{\xi\in\Z^d\setminus\{0\}}CR^{-1}\kappa^{d-1}|\xi|^{r_0-s_0}\,\le\,CR^{-1}\kappa^{d-1}.
\end{eqnarray*}
Items~(i)--(ii) follow.
It remains to prove item~(iii). For that purpose, for all $n\ge1$, we consider
\[\Rc_R^n:=\bigcup_{\xi\in\Z^M\setminus\{0\}\atop |\xi|\le n}\Rc_R(\xi).\]
By definition, the sequence $(\Rc_R^n)_n$ is increasing and satisfies $\Rc_R=\bigcup_n\Rc_R^n$, and~\eqref{eq:dioph} holds for all $k\in\R^d\setminus\Rc_R^n$ and $|\xi|\le n$.
\end{proof}

\subsection{Control of the Rayleigh-Schrödinger coefficients}
For $k\in\Oc$, the Fourier symbol of $-\triangle_k$ does not vanish outside the origin, which ensures that all the terms of the form $(-\triangle_k)^{-b_1-1}\Pi V\ldots(-\triangle_k)^{-b_m-1}\Pi V$ are well-defined in $\Ld^2(\Omega)$. In view of Proposition~\ref{prop:sol-nonlin-rec}, this entails that the Rayleigh-Schrödinger coefficients $(\nu_k^n,\phi_k^n)$ are well-defined for all $n$ whenever $k\in\Oc$. It remains to establish fine estimates on these coefficients, for which we exploit the Diophantine condition~\eqref{eq:classical-Dioph2}.
\begin{prop}\label{prop:cor-QP}
Consider the quasiperiodic setting~\emph{(QP)},
assume that the winding matrix~$F$ satisfies the Diophantine condition~\eqref{eq:classical-Dioph2} with $r_0>0$, that the lifted map $\tilde V$ has compactly supported Fourier transform, and set $s_0>M+r_0$ and $K:=\sup\{1\vee|\xi|:\xi \in \supp \F\tilde V\}$.
There exists a constant $C$ (depending on $F,M,s_0$) and for all $\ell,R>0$ there exists a field of $\ell$-jets of Bloch waves $(\nu_{k}^n,\phi_{k}^n:k\in\R^d\setminus\Rc_R^{K\ell},\,0\le n\le\ell)$ in the sense of Definition~\ref{def:taylor-waves}, which satisfy
for all $n\ge1$, $k\in\R^d\setminus\Rc_R^{Kn}$, and $s,j\ge0$,
\begin{eqnarray}
|\nabla_k^j\nu_{k}^n|&\le&(CRjK^{s_0+1}n^{s_0+1})^j(CRK^{s_0+M}n^{s_0})^{n}\|\F \tilde V\|_{\Ld^\infty}^{n+1},\label{eq:est-nu-QP}\\
\|\nabla_k^j \phi_k^n\|_{H^s(\Omega)}&\le& (CKn)^{s}(CRjK^{s_0+1}n^{s_0+1})^j(CRK^{s_0+M}n^{s_0})^{n}\|\F \tilde V\|_{\Ld^\infty}^n.\label{eq:est-cor-QP}
\end{eqnarray}
In particular, for all $\hat u\in C^\infty_c(\R^d)$ supported in $\R^d\setminus\Rc_R^{Kn}$, we deduce
\begin{multline}\label{eq:est-cor-add-QP}
\sup_{\omega\in\Omega}\bigg(\int_{\R^d}\Big|\int_{\R^d}e^{ik\cdot x}\,\nabla_k^j\nabla^s\phi_k^n(x,\omega)\,\hat u(k)\,d^*k\Big|^2dx\bigg)^\frac12 
\\
\le\,(CKn)^{s+M+1}(CRjK^{s_0+1}n^{s_0+1})^j(CRK^{s_0+M}n^{s_0})^{n}\|\F \tilde V\|_{\Ld^\infty}^n\,\|\hat u\|_{\Ld^2},
\end{multline}
which holds uniformly wrt tranlations $\omega\in\Omega=\T^M$.
\end{prop}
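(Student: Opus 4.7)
The plan is to combine the tree formulas of Proposition~\ref{prop:sol-nonlin-rec} with a Fourier-mode analysis on the torus $\T^M$ and the Diophantine inversion of the fibered Laplacian from Lemma~\ref{lem:dioph}. We work on $\Ld^2(\Omega)=\Ld^2(\T^M)$ in Fourier variables $\xi\in\Z^M$: the fibered Laplacian acts as multiplication by the symbol $|F\xi+k|^2-|k|^2$, and the potential acts as convolution by $\F\tilde V$, which is supported in the ball $|\xi|\le K$. Starting from the constant $\hat1=\delta_0$, each factor $\Pi V$ in any monomial $(-\triangle_k)^{-b_m-1}\Pi V\cdots(-\triangle_k)^{-b_1-1}\Pi V\cdot1$ occurring in the tree formula for $\phi_k^n$ widens the Fourier support by at most $K$, so every intermediate function has Fourier support in $\{0<|\xi|\le Kn\}$.

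For $k\in\R^d\setminus\Rc_R^{Kn}$, Lemma~\ref{lem:dioph}(iii) yields $||F\xi+k|^2-|k|^2|\ge R^{-1}|\xi|^{-s_0}$ on these modes, so the multiplier $(-\triangle_k)^{-1}\Pi$ has $\Ld^2$-norm bounded by $R(Kn)^{s_0}$ and each factor $(-\triangle_k)^{-b-1}\Pi$ contributes $(R(Kn)^{s_0})^{b+1}$. On the other hand $\|\tilde V\|_{\Ld^\infty}\le CK^M\|\F\tilde V\|_{\Ld^\infty}$ since the number of modes in $\supp\F\tilde V$ is at most $CK^M$, so each $\Pi V$ contributes $CK^M\|\F\tilde V\|_{\Ld^\infty}$. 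In the formula for $\phi_k^n$ of Proposition~\ref{prop:sol-nonlin-rec}, one has $m\le n$ factors of $\Pi V$, $|b|=\ell$ units of inverse Laplacian, and at most $n-m$ factors of $\nu_k^{1+a_i}$. Using $\sharp\Tc_m\le 4^m$ and summing over the indices $(m,\ell,a,b)$ (whose count is bounded by $C^n$), an inductive argument on $n$ (with the bound on $\nu_k^n$ handled in parallel, the point being that each expectation in the formula for $\nu_k^n$ involves $n+1$ factors of $V$ and exactly $n$ units of inverse Laplacian) yields the $\Ld^2(\Omega)$ bound $(CRK^{s_0+M}n^{s_0})^n\|\F\tilde V\|_{\Ld^\infty}^n$ for $\phi_k^n$ and $(CRK^{s_0+M}n^{s_0})^n\|\F\tilde V\|_{\Ld^\infty}^{n+1}$ for $\nu_k^n$. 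The factor $(CKn)^s$ in the $H^s(\Omega)$ estimate then follows from the Fourier localization $|\xi|\le Kn$.

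For the $k$-derivatives, observe that the symbol $\Phi_\xi(k):=|F\xi+k|^2-|k|^2=|F\xi|^2+2F\xi\cdot k$ is affine in $k$ with $|\nabla_k\Phi_\xi|=2|F\xi|\le C|\xi|$. Hence $|\nabla_k^j\Phi_\xi^{-b-1}|\le (Cj|\xi|)^j(R|\xi|^{s_0})^{j+b+1}$ by Faà di Bruno. Distributing $j$ derivatives over the at most $n$ inverse-Laplacian factors of each monomial via Leibniz (with a multinomial cost $\le n^j$ absorbed into the constants) produces the extra factor $(CRjK^{s_0+1}n^{s_0+1})^j$ announced in~\eqref{eq:est-nu-QP}--\eqref{eq:est-cor-QP}. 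The final uniform-in-$\omega$ bound~\eqref{eq:est-cor-add-QP} comes from writing $\phi_k^n(x,\omega)=\sum_{|\xi|\le Kn}\hat\phi_k^n(\xi;k)\,e^{iF\xi\cdot x}e^{i\xi\cdot\omega}$, applying Plancherel in $x$ to the integral against $\hat u(k)$, and controlling the supremum over $\omega$ by Cauchy--Schwarz over the $(CKn)^M$ relevant modes, which accounts for the additional $(CKn)^{M+1}$ factor (one extra $Kn$ coming from the $s$-derivatives in $x$).

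The main obstacle is the combinatorial bookkeeping in the second step: a naive iteration of the recurrence~\eqref{eq:jetbloch} would produce $C^{n^2}$ terms and destroy any hope of stretched-exponential control, so one must really rely on the tree resummation of Proposition~\ref{prop:sol-nonlin-rec} to keep the count to $C^n$ while simultaneously tracking, term by term, the exact budget of $n+1$ potentials and $n$ inverse Laplacians. The $k$-derivative estimate is a second delicate point, since the number of ways to distribute derivatives among factors grows combinatorially and must be absorbed without spoiling the exponent in $n$.
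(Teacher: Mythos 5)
Your proposal follows the paper's route: tree formulas from Proposition~\ref{prop:sol-nonlin-rec}, Fourier analysis on $\T^M$ with the propagating support constraint $|\xi|\le Kn$, and the Diophantine lower bound from Lemma~\ref{lem:dioph}(iii). For $j=0$ the bookkeeping matches the paper's Step~1, and for~\eqref{eq:est-cor-add-QP} you use Cauchy--Schwarz over the $O((Kn)^M)$ active modes where the paper uses the Sobolev embedding $H^a(\T^M)\hookrightarrow\Ld^\infty(\T^M)$; both are fine (your route even gives a slightly smaller power $(Kn)^{M/2}$).

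The one place where the reasoning as written does not produce the stated constants is the $k$-derivative estimate. Two issues. First, your single-factor bound $|\nabla_k^j\Phi_\xi^{-b-1}|\le(Cj|\xi|)^j(R|\xi|^{s_0})^{j+b+1}$ misses the falling-factorial coefficient: the exact derivative is $\frac{(b+j)!}{b!}|2F\xi|^j|\Phi_\xi|^{-(b+j+1)}$, and $\frac{(b+j)!}{b!}\le(b+j)^j$ behaves like $(Cn)^j$ when $b$ is comparable to $n$, not $j^j$. Second, the claim that the multinomial cost $\le n^j$ can be ``absorbed into the constants'' is not correct as stated: $n^j$ is not a constant, and if kept it multiplies the per-derivative gain by $n$, yielding $(CRjK^{s_0+1}n^{s_0+2})^j$ or worse rather than the stated $(CRjK^{s_0+1}n^{s_0+1})^j$. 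The fix (which the paper also leaves implicit) is to avoid both crude bounds at once and use the exact cancellation in the Leibniz formula: $\binom{j}{\alpha}\prod_i\alpha_i!=j!$ and $\binom{j}{\alpha}\prod_i\frac{(b_i+\alpha_i)!}{b_i!}=j!\prod_i\binom{b_i+\alpha_i}{\alpha_i}\le j!\,2^{|b|+j}$, while the number of Leibniz terms is $\binom{j+p-1}{p-1}\le 2^{j+p}$. This controls the combinatorics by $j!\cdot C^{j}\cdot C^{n}$ (the $C^n$ being absorbed into the main $(CRK^{s_0+M}n^{s_0})^n$ factor), which gives precisely $(CRjK^{s_0+1}n^{s_0+1})^j$. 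Without this cancellation argument the derivative bounds in~\eqref{eq:est-nu-QP}--\eqref{eq:est-cor-QP} do not come out with the advertised exponents.
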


\begin{samepage}\begin{rems}\mbox{}\label{rem:per++}
\begin{enumerate}[$\bullet$]
\item \emph{Dependence on $n$:} We believe that the $(Cn^{s_0})^{n}$-growth of the above bounds on $(\nu_k^n,\phi_k^n)$ is essentially optimal, which implies in particular that the Rayleigh-Schrödinger series~\eqref{eq:series-ansatz} cannot be absolutely convergent,
cf.~the discussion in Section~\ref{sec:spectral}.
\item \emph{Irrational winding matrix:} If we merely assume $F\xi \ne 0$ for all $\xi \in \Z^M\setminus\{0\}$, then \eqref{eq:est-cor-QP}, \eqref{eq:est-nu-QP}, and~\eqref{eq:est-cor-add-QP} hold in a modified form where the dependence on $K$ and $n$ is replaced by some constant $C_{K,n}$ that is no longer explicit (and can grow much faster, for instance for Liouville frequencies). The easy adaptation of the proof is left to the reader.
\item \emph{Periodic setting:} In the periodic setting (that is, (QP) with $M=d$ and $F=\Id$), the bounds of Proposition~\ref{prop:cor-QP} above hold in the following improved form: for all $R,n\ge1$, $s,j\ge0$ , and $k\in\R^d\setminus\Rc_R$ (with $\Rc_R$ defined as in the last item of Remarks~\ref{rem:per+}),
\[\|\nabla_k^j\phi_k^n\|_{H^s(\Omega)}\,\le\, n^{s}(CRjn)^j(CR)^{n}.\]
This ensures the convergence of the Rayleigh-Schrödinger perturbation series~\eqref{eq:series-ansatz} for small $\lambda$ (which also follows from the Kato-Rellich theorem).\qedhere
\end{enumerate}
\end{rems}\end{samepage}

\begin{proof}[Proof of Proposition~\ref{prop:cor-QP}]
We use Fourier series~$\F$ in the (high-dimensional) torus $\Omega=\T^M$.
We make a slight abuse of notation in this proof and write $\hat V=\F\tilde V:\Z^M\to\C$. We split the proof into two steps.

\medskip
\step1 Proof of~\eqref{eq:est-nu-QP} and~\eqref{eq:est-cor-QP} for $j=0$.

Lemma~\ref{lem:dioph} allows to invert the Fourier symbol of $-\triangle_k$.
By Proposition~\ref{prop:sol-nonlin-rec}, it suffices to control for all $m\ge1$ and $b\in\N^m$ the functions
\[\chi_k^{m,b}:=(-\triangle_k)^{-b_1-1}\Pi \tilde V\ldots(-\triangle_k)^{-b_{m}-1}\Pi\tilde V~:~\T^M\to\C.\]
Since $\F[\Pi f](\xi)=\hat f(\xi)\mathds1_{\xi\ne0}$ for all $f\in\Ld^2(\T^M)$, the Fourier transform of $\chi_k^{m,b}$ takes the form 
\begin{equation}\label{e.FS}
\hat \chi_k^{m,b}(\xi)\,=\,\mathds1_{\xi\ne0}\sum_{\xi_2,\ldots,\xi_m\in\Z^M\setminus\{0\}}\frac{\hat V(\xi-\xi_2)\ldots\hat V(\xi_{m-1}-\xi_m)\hat V(\xi_m)}{(|F\xi+k|^2-|k|^2)^{b_1+1}\ldots(|F\xi_m+k|^2-|k|^2)^{b_m+1}},
\end{equation}
so that Parseval's formula yields
\begin{equation*} 
\expecm{V  \chi_k^{m,b}}\,
=\,\sum_{\xi_1,\ldots,\xi_m\in\Z^M\setminus\{0\}}\frac{\hat V(-\xi_1)\hat V(\xi_m)}{(|F\xi_m+k|^2-|k|^2)^{b_m+1}}\prod_{j=1}^{m-1}\frac{\hat V(\xi_j-\xi_{j+1})}{(|F\xi_j+k|^2-|k|^2)^{b_j+1}}.
\end{equation*}
Recall that $K:=\max\{1\vee|\xi|:\xi\in\supp\hat V\}<\infty$, so that the above sum can be restricted to
\[\big\{(\xi_1,\ldots,\xi_m)\in(\Z^M\setminus\{0\})^m\,:\,|\xi_{\ell}|,|\xi_{m-\ell+1}|\le \ell K,~\forall\, 1\le \ell\le \lceil \tfrac m2\rceil\big\}.\]
Combining this observation with Lemma~\ref{lem:dioph} then yields the following for all $m\ge1$ and $b\in\N^m$: if $m\in2\N$, we have for all $k\in\R^d\setminus\Rc_R^{K\frac m2}$,
\begin{eqnarray*}
\big|\expecm{V \chi_k^{m,b}}\big|
&\le&(RK^{s_0})^{m+|b|}\,\|\hat V\|_{\Ld^\infty}^{m+1}\sum_{\xi_1,\ldots,\xi_m\in\Z^M}\mathds1_{|\xi_1|,|\xi_m|\le K}\\
&&\hspace{2cm}\times\prod_{j=2}^{m/2}\Big(j^{s_0(b_j+b_{m-j+1}+2)}\,\mathds1_{|\xi_j-\xi_{j-1}|\le K}\,\mathds1_{|\xi_{m-j+1}-\xi_{m-j+2}|\le K}\Big)\\
&\le&(CK)^{mM}(RK^{s_0}m^{s_0})^{m+|b|}\,\|\hat V\|_{\Ld^\infty}^{m+1}.
\end{eqnarray*}
Likewise, if $m\in 2\N+1$, the same estimate holds for all $k\in\R^d\setminus\Rc_R^{K\frac{m+1}2}$.
Injecting this into the formula of Proposition~\ref{prop:sol-nonlin-rec} for the $\nu_{k}^n$'s (recall that $\Tc_m$ denotes the set of rooted $m$-trees~\eqref{eq:rooted-trees}), we obtain for all $n\ge1$ and $k\in\R^d\setminus\Rc_R^{K\lceil\frac n2\rceil}$,
\begin{multline*}
|\nu_{k}^n|\,\le\,\sum_{m=1}^{n+1}\sum_{a\in\Tc_m}\sum_{c\in\N^m\atop|c|=n+1-m}\sum_{b^1\in\N^{c_1}\atop|b^1|=a_1}~\ldots~\sum_{b^m\in\N^{c_m}\atop|b^m|=a_m}
\big(RK^{s_0}c_1^{s_0}\big)^{c_1+a_1}(CK)^{c_1 M}\|\hat V\|_{\Ld^\infty}^{c_1+1}\\
\times \ldots \big(RK^{s_0}c_m^{s_0}\big)^{c_m+a_m}(CK)^{c_m M}\|\hat V\|_{\Ld^\infty}^{c_m+1},
\end{multline*}
for some $C$ depending on $M$.
Since $\sharp\Tc_m\le4^m$, this directly leads to
\begin{align}\label{eq:est-nukn}
|\nu_{k}^n|\,\le\,(CRK^{s_0+M}n^{s_0})^{n}\|\hat V\|_{\Ld^\infty}^{n+1}.
\end{align}
Since $\nu_k^0=\expec{V}$, the same estimate obviously holds for $n=0$, and the conclusion~\eqref{eq:est-nu-QP} for $j=0$ follows.
We turn to the bounds on the correctors.
For that purpose, for all $s\ge0$, $m\ge1$, and $b\in\N^m$, we combine \eqref{e.FS} with Parseval's identity in the form
\begin{multline*}
\|\chi_k^{m,b}\|_{H^s(\Omega)}^2\,\lesssim\,\sum_{\xi_1\in\Z^M\setminus\{0\}}\langle\xi_1\rangle^{2s}\\
\times\bigg|\sum_{\xi_2,\ldots,\xi_m\in\Z^M\setminus\{0\}}\frac{\hat V(\xi_1-\xi_2)\ldots\hat V(\xi_{m-1}-\xi_m)\hat V(\xi_m)}{(|F\xi_1+k|^2-|k|^2)^{b_1+1}\ldots(|F\xi_m+k|^2-|k|^2)^{b_m+1}}\bigg|^2.
\end{multline*}
By the Cauchy-Schwarz inequality together with the compactness of the support of $\hat V$,
\begin{multline*}
\|\chi_k^{m,b}\|_{H^s(\Omega)}^2\,\le\,\|\hat V\|_{\Ld^2}^{2m}\\
\times\sup_{\xi_1\in\Z^M\setminus\{0\}}\sum_{\xi_2,\ldots,\xi_m\in\Z^M\setminus\{0\}}\frac{\langle\xi_1\rangle^{2s}\,\mathds1_{|\xi_1-\xi_2|\le K}\ldots\mathds1_{|\xi_{m-1}-\xi_m|\le K}\mathds1_{|\xi_m|\le K}}{(|F\xi_1+k|^2-|k|^2)^{2(b_1+1)}\ldots(|F\xi_m+k|^2-|k|^2)^{2(b_m+1)}}.
\end{multline*}
Hence, using Lemma~\ref{lem:dioph}, for all $k\in\R^d\setminus\Rc_R^{Km}$,
\begin{eqnarray*}
\|\chi_k^{m,b}\|_{H^s(\Omega)}^2&\le&(CK)^{2mM}(Km+1)^{2s}\|\hat V\|_{\Ld^\infty}^{2m}\prod_{j=1}^m\big(RK^{s_0}(m-j+1)^{s_0}\big)^{2(b_j+1)}\\
&\le&(CK)^{2mM}(Km+1)^{2s}(RK^{s_0}m^{s_0})^{2(m+|b|)}\|\hat V\|_{\Ld^\infty}^{2m},
\end{eqnarray*}
for some $C$ depending on $M$.
Injecting this estimate into the formula of Proposition~\ref{prop:sol-nonlin-rec} for the $\phi_k^n$'s, we obtain for all $n\ge1$ and $k\in\R^d\setminus\Rc_R^{Kn}$,
\[\|\phi_{k}^n\|_{H^s(\Omega)}\,\le\,(CKn)^{s}\sum_{m=1}^n\sum_{\ell=0}^{n-m}(CK)^{mM}(CRK^{s_0}m^{s_0})^{m+\ell}\|\hat V\|_{\Ld^\infty}^{m}\sum_{a\in\N^\ell\atop|a|=n-m-\ell}|\nu_{k}^{a_1}|\ldots|\nu_{k}^{a_\ell}|.\]
Combined with the bound~\eqref{eq:est-nukn} on the $\nu_k^n$'s, this yields
\[\|\phi_{k}^n\|_{H^s(\Omega)}\,\le\,(CKn)^{s}(CRK^{s_0+M}n^{s_0})^{n}\|\hat V\|_{\Ld^\infty}^{n},\]
that is,~\eqref{eq:est-cor-QP} for $j=0$.

\medskip
\step2 Conclusion.

We start with the proof of~\eqref{eq:est-cor-QP} for all $j\ge1$.
For that purpose, we note that, for $n\ge1$, $|\xi|\le n$, and $k\in\R^d\setminus \Rc_R^n$, Lemma~\ref{lem:dioph} yields
\begin{align*}
\bigg|\nabla_k^j \frac{1}{|F\xi+k|^2-|k|^2}\bigg|\,=\,\frac{j!\,|2F\xi|^j}{(|F\xi+k|^2-|k|^2)^{j+1}}\,\le\,\frac{(Cjn)^j(Rn^{s_0})^{j}}{||F\xi+k|^2-|k|^2|}.
\end{align*}
Taking the derivative $\nabla_k$ in both sides of the formulas of Proposition~\ref{prop:sol-nonlin-rec} for $(\nu_k^n,\phi_k^n)$, this bound allows to repeat the argument of Step~1, and the conclusion easily follows.

We turn to the proof of~\eqref{eq:est-cor-add-QP} and show that it follows from~\eqref{eq:est-cor-QP}. In the rest of this proof, we distinguish between the corrector $\phi_k^n$ defined on $\R^d\times\T^M$ and its folded version $\tilde\phi_k^n$ defined on $\T^M$, which are related via $\phi_k^n(x,\omega)=\tilde \phi_k^n(F^Tx+\omega)$.
By the Sobolev embedding, for $a>M$, the space $H^a(\T^M)$ is embedded into $\Ld^\infty(\T^M)$. Denote by $\nabla_{\T^M}$ the weak gradient on $\T^M$.
For all $\hat u\in C^\infty_c(\R^d)$, we have by definition of $\phi_k^n$, the Sobolev embedding, and Fubini's theorem,
\begin{eqnarray*}
\lefteqn{\sup_{\omega\in \T^M} \bigg(\int_{\R^d}\Big|\int_{\R^d}e^{ik\cdot x}\,\nabla_k^j\nabla^s\phi_k^n(x,\omega)\,\hat u(k)\,d^*k\Big|^2dx\bigg)^\frac12}\\
&= &\sup_{\omega\in \T^M}\bigg(\int_{\R^d}\Big|\int_{\R^d}e^{ik\cdot x}\,\hat u(k)\,\nabla_k^j(F\nabla_{\T^M})^s\tilde \phi_k^n(F^Tx+\omega)\,d^*k\Big|^2dx\bigg)^\frac12\\
&\le &\bigg(\int_{\R^d}\Big\|\int_{\R^d}e^{ik\cdot x}\,\hat u(k)\,\nabla_k^j\nabla_{\T^M}^s\tilde \phi_k^n\,d^*k\Big\|_{\Ld^\infty(\T^M)}^2dx\bigg)^\frac12\\
&\le &C\bigg(\int_{\R^d}\Big\|\int_{\R^d}e^{ik\cdot x}\,\hat u(k)\,\nabla_k^j\nabla_{\T^M}^{s+a}\tilde \phi_k^n\,d^*k\Big\|_{\Ld^2(\T^M)}^2dx\bigg)^\frac12.
\end{eqnarray*}
Hence, by Parseval's identity,
\begin{multline*}
\sup_{\omega\in \T^M} \bigg(\int_{\R^d}\Big|\int_{\R^d}e^{ik\cdot x}\,\nabla_k^j\nabla^s\phi_k^n(x,\omega)\,\hat u(k)\,d^*k\Big|^2dx\bigg)^\frac12\\
\,\le\,C\|u\|_{\Ld^2(\R^d)}\|\nabla_k^j\nabla_{\T^M}^{s+a}\tilde \phi_k^n\|_{\Ld^2(\T^M)}\,\le\,C\|u\|_{\Ld^2(\R^d)}\|\nabla_k^j \phi_k^n\|_{H^{s+a}(\T^M)}.
\end{multline*}
The conclusion~\eqref{eq:est-cor-add-QP} then follows from~\eqref{eq:est-cor-QP}.
\end{proof}


\section{Schr\"odinger flow 
} \label{chap:pr-taylorbloch}

\subsection{Main result and structure of the proof} 
The following asserts that bounds on the Rayleigh-Schrödinger coefficients imply an effective description of the Schrödinger flow on long timescales.
\begin{prop}[Approximate normal form via approximate Bloch waves]\label{prop:taylorbloch}
For $s_0>0$ and $\ell,R,K,M\ge1$, assume that the Rayleigh-Schrödinger coefficients $(\nu_k^n,\phi_k^n)_{k,n}$ satisfy the conclusions of Proposition~\ref{prop:cor-QP} for all $1\le n\le\ell$ and $k\in\R^d\setminus\mathcal R_R^{Kn}$,
and consider an initial condition $u^\circ\in\Sc(\R^d)$ with $\hat u^\circ$ compactly supported in~$\R^d\setminus \mathcal R_{R}^{K\ell}$.
Denote by $u_\lambda$ the Schrödinger flow
\begin{align}\label{eq:Schreqn}
i\partial_tu_\lambda=(-\triangle+\lambda V)u_\lambda,\qquad u_\lambda|_{t=0}=u^\circ,
\end{align}
and consider the following approximate flow
\begin{align}\label{eq:uapprox}
U_{\lambda}^{\ell;t}(x)\,:=\,\int_{\R^d} e^{-it(|k|^2+\kappa_{k,\lambda}^\ell)}\,e^{ik\cdot x}\,\hat u^\circ(k)\,d^*k.
\end{align}
Then, for all  $\lambda\le\frac12\big(CRK^{s_0+M}\ell^{s_0}\|\F \tilde V\|_{\Ld^\infty}\big)^{-1}$  and $T\ge0$,
\begin{align*}
&\sup_\Omega\sup_{0\le\lambda'\le\lambda}\, \sup_{0\le t\le T}\big\|u_{\lambda'}^t-U_{\lambda'}^{\ell;t}\big\|_{\Ld^2}
\,\le\,\lambda CR K^{s_0+2M+1}\ell^{s_0+M+1}\|\F \tilde V\|_{\Ld^\infty} \|u^\circ\|_{\Ld^2}\\
&\hspace{5cm}+\lambda^{\ell+1}T(K\ell)^{M+1}(CRK^{s_0+M}\ell^{s_0})^\ell \|\F \tilde V\|_{ \Ld^\infty}^{\ell+1} \|u^\circ\|_{\Ld^2}.\qedhere
\end{align*}
\end{prop}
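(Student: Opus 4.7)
The plan is to implement step~(S5) of the general strategy of the introduction in this quasiperiodic setting. I first introduce the fibered transform
\[
\Tc_\lambda^\ell v(x,\omega):=\int_{\R^d}\hat v(k)\,e^{ik\cdot x}\,\psi_{k,\lambda}^\ell(x,\omega)\,d^*k,
\]
which is well-defined whenever $\hat v$ is compactly supported in $\R^d\setminus\Rc_R^{K\ell}$ thanks to Proposition~\ref{prop:cor-QP}. A direct fibered computation using Lemma~\ref{lem:floquet}(iii), the identity $(-\triangle+\lambda V)\big(e^{ik\cdot x}\psi_{k,\lambda}^\ell\big)=e^{ik\cdot x}(|k|^2+\Lc_{k,\lambda})\psi_{k,\lambda}^\ell$, and Lemma~\ref{lem:eqnssumcorr} then yields the approximate normal form identity
\[
\big(i\partial_t-\Lc_\lambda\big)\,\Tc_\lambda^\ell U_\lambda^{\ell}\,=\,-\lambda^{\ell+1}\,\Dc_\lambda^\ell U_\lambda^{\ell},\qquad \Dc_\lambda^\ell U_\lambda^{\ell;t}(x,\omega):=\int_{\R^d}\hat U_\lambda^{\ell;t}(k)\,e^{ik\cdot x}\,\dg_{k,\lambda}^\ell(x,\omega)\,d^*k,
\]
where I crucially exploit that $\hat U_\lambda^{\ell;t}$ remains supported in $\R^d\setminus\Rc_R^{K\ell}\subset\Oc$ for all time (the effective flow only modulates Fourier coefficients by the phase $e^{-it(|k|^2+\kappa_{k,\lambda}^\ell)}$), so that the projector $\Pi_\Oc$ in~\eqref{eq:effective-pseudo} is trivially inactive.

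I would next invoke Duhamel's formula against the unitary group $e^{-it\Lc_\lambda}$ on $\Ld^2(\R^d\times\Omega)$, noting that $\Tc_\lambda^\ell U_\lambda^{\ell}|_{t=0}=\Tc_\lambda^\ell u^\circ$ while $u_\lambda|_{t=0}=u^\circ$, to obtain
\[
\big\|u_\lambda^t-\Tc_\lambda^\ell U_\lambda^{\ell;t}\big\|_{\Ld^2(\R^d\times\Omega)}\,\le\,\|\Tc_\lambda^\ell u^\circ-u^\circ\|_{\Ld^2}+\lambda^{\ell+1}\int_0^t\|\Dc_\lambda^\ell U_\lambda^{\ell;s}\|_{\Ld^2}\,ds.
\]
Combined with the triangle inequality evaluated at a fixed $\omega\in\Omega$, namely $\|u_\lambda^t(\cdot,\omega)-U_\lambda^{\ell;t}\|_{\Ld^2(\R^d)}\le \|u_\lambda^t(\cdot,\omega)-(\Tc_\lambda^\ell U_\lambda^{\ell;t})(\cdot,\omega)\|_{\Ld^2(\R^d)}+\|(\Tc_\lambda^\ell-\Id)U_\lambda^{\ell;t}(\cdot,\omega)\|_{\Ld^2(\R^d)}$, this reduces the proof to two estimates: (a)~a pointwise-in-$\omega$ bound on $\|(\Tc_\lambda^\ell-\Id)v\|_{\Ld^\infty(\Omega;\Ld^2(\R^d))}$ for $v\in\{u^\circ,U_\lambda^{\ell;t}\}$; (b)~a pointwise-in-$\omega$ bound on $\|\Dc_\lambda^\ell U_\lambda^{\ell;s}\|_{\Ld^\infty(\Omega;\Ld^2(\R^d))}$.

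For~(a), expanding $\psi_{k,\lambda}^\ell-1=\sum_{n=1}^\ell\lambda^n\phi_k^n$ and applying the pointwise-in-$\omega$ estimate~\eqref{eq:est-cor-add-QP} (with $j=s=0$) to each term, the smallness assumption $\lambda\le\tfrac12\big(CRK^{s_0+M}\ell^{s_0}\|\F\tilde V\|_{\Ld^\infty}\big)^{-1}$ turns the sum into a geometric series: factoring out one copy of $\lambda CRK^{s_0+M}\ell^{s_0}\|\F\tilde V\|_{\Ld^\infty}$ leaves a tail bounded by $\sum_n(1/2)^{n-1}(CK\ell)^{M+1}$, producing the prefactor $\lambda CRK^{s_0+2M+1}\ell^{s_0+M+1}\|\F\tilde V\|_{\Ld^\infty}\|v\|_{\Ld^2}$, and $\|U_\lambda^{\ell;t}\|_{\Ld^2}=\|u^\circ\|_{\Ld^2}$ follows from unitarity of the effective flow on $\Oc$ (where $\kappa_{k,\lambda}^\ell\in\R$). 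For~(b), Lemma~\ref{lem:eqnssumcorr} expresses $\dg_{k,\lambda}^\ell$ as a finite linear combination of the terms $\Pi V\phi_k^\ell$, $\nu_k^l\phi_k^{\ell-l}$, and $\lambda^{n+l-\ell}\nu_k^{l+1}\phi_k^n$; each is estimated via~\eqref{eq:est-nu-QP} and~\eqref{eq:est-cor-add-QP} (for the product $V\phi_k^\ell$ I absorb $V$ via an additional Sobolev embedding on $\T^M$, using that $\tilde V$ is smooth), which together with the smallness of $\lambda$ to tame the extra $\lambda^{n+l-\ell}$ powers yields the advertised $(K\ell)^{M+1}(CRK^{s_0+M}\ell^{s_0})^\ell\|\F\tilde V\|_{\Ld^\infty}^{\ell+1}\|u^\circ\|_{\Ld^2}$; the time integration in Duhamel then produces the extra factor $t$. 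The supremum over $\lambda'\le\lambda$ is automatic since all bounds are monotone in $\lambda'$. The main technical care, and the reason for the Sobolev exponent $M+1$ in the prefactors, lies in the requirement that every estimate hold uniformly in $\omega\in\T^M$, which forces the use of the pointwise-in-$\omega$ version~\eqref{eq:est-cor-add-QP} rather than the (sharper) $\Ld^2(\Omega)$-version~\eqref{eq:est-cor-QP}; the genuinely deep inputs — the tree-counting resummation of the Rayleigh-Schr\"odinger series and the Diophantine invertibility of $-\triangle_k$ — have already been handled in Section~\ref{chap:QP}, so this proposition is essentially a clean assembly of the preceding ingredients.
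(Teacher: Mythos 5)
Your proof takes essentially the same approach as the paper. The paper introduces an intermediate exact flow $W_\lambda^\ell$ starting from the prepared initial data $W_\lambda^{\ell;\circ}=\Tc_\lambda^\ell u^\circ$ and splits the error into three terms via the chain $u_\lambda\to W_\lambda^\ell\to V_\lambda^\ell\to U_\lambda^\ell$ (Lemmas~\ref{lem:closeinitcond}, \ref{lem:almdiag}, \ref{lem:throw-away}), where $V_\lambda^\ell=\Tc_\lambda^\ell U_\lambda^\ell$ is exactly your transformed effective flow; you merge the first two steps into a single Duhamel estimate with a nonzero initial discrepancy, which is a harmless streamlining. The key inputs — the eigendefect formula of Lemma~\ref{lem:eqnssumcorr}, the uniform-in-$\omega$ corrector estimate~\eqref{eq:est-cor-add-QP}, the geometric summation enforced by the smallness $\lambda\le\frac12(CRK^{s_0+M}\ell^{s_0}\|\F\tilde V\|_{\Ld^\infty})^{-1}$, and the observation that the effective flow preserves the Fourier support in $\R^d\setminus\Rc_R^{K\ell}$ — are exactly those of the paper. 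One small correction: you apply Duhamel and unitarity on $\Ld^2(\R^d\times\Omega)$ but then evaluate the triangle inequality at a fixed $\omega$, which does not logically connect (the $\Ld^2(\R^d\times\Omega)$ bound averages over $\omega$ rather than controlling the supremum). As in the paper, one should instead run the Duhamel/unitarity argument in $\Ld^2(\R^d)$ for each fixed $\omega$, and only then pass to $\sup_\Omega$ using the uniform-in-$\omega$ estimates of Proposition~\ref{prop:cor-QP}; this is a trivial modification and the rest of your assembly goes through.
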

The following result shows that such an $\Ld^2$ approximation result leads to ballistic transport properties. More precisely, the moments of $u_\lambda$ are shown to be close to those of the free flow~$u_0$ (although $u_\lambda^t$ is not close to $u_0^t$ in an $\Ld^2$ sense for $t\gtrsim\lambda^{-2}$, say if $\nu_k^1\ne0$).

\begin{prop}[Asymptotic ballistic transport]\label{prop:ball-tsp-weak}
Given $m\ge1$, $u^\circ\in\Sc(\R^d)$, and $V\in W^{m,\infty}(\R^d)$, denote by $u_\lambda$ the Schrödinger flow~\eqref{eq:Schreqn} and by $u_0$ the corresponding free flow, and for $\ell\ge1$ consider the approximate flow $U_\lambda^\ell$ defined in~\eqref{eq:uapprox}.
Then for all $t\ge1$,
\begin{multline*}
|M_m^t(u_\lambda)-M_m^t(u_0)|\,\lesssim_{m,u^\circ}\,\|u_\lambda^t-U_\lambda^{\ell;t}\|_{\Ld^2}^\frac1{m+1}+\|u_\lambda^t-U_\lambda^{\ell;t}\|_{\Ld^2}\\
+\sum_{j=1}^{m+1}\sum_{l=1}^j\Big(\int_{\R^d}|\nabla_k^l\kappa_{k,\lambda}^\ell|^\frac{2j}l|\langle\tfrac1t\nabla\rangle^{m+1}\hat u_0^t(k)|^2d^*k\Big)^\frac12,
\end{multline*}
where the multiplicative constant only depends on $d,m$, $\|V\|_{W^{m,\infty}}$, $\|\langle\nabla\rangle^{m+1}u^\circ\|_{\Ld^2}$, and $\|\langle\cdot\rangle^{m+1}u^\circ\|_{\Ld^2}$.
\end{prop}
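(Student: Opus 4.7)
The proof proceeds via a two-step triangle decomposition followed by a Plancherel/Fa\`a di Bruno expansion. Since $M_m^t$ is the $\Ld^2$-norm against the weight $\phi:=|\cdot|/t$, the reverse triangle inequality gives
\[
|M_m^t(u_\lambda)-M_m^t(u_0)|\,\le\, M_m^t(u_\lambda-U_\lambda^\ell)+M_m^t(U_\lambda^\ell-u_0),
\]
and the two pieces are treated separately.

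For the first piece I use the H\"older interpolation
\[
\|\phi^m f\|_{\Ld^2}\,\le\,\|f\|_{\Ld^2}^{1/(m+1)}\|\phi^{m+1}f\|_{\Ld^2}^{m/(m+1)}
\]
with $f:=u_\lambda^t-U_\lambda^{\ell;t}$, and bound $\|\phi^{m+1}f\|_{\Ld^2}\le M_{m+1}^t(u_\lambda)+M_{m+1}^t(U_\lambda^\ell)$ by triangle. The Schr\"odinger moment $M_{m+1}^t(u_\lambda)\lesssim 1$ holds uniformly in $t\ge1$ by a classical commutator argument: differentiating $\|x^\alpha u_\lambda^t\|_{\Ld^2}^2$ in time produces commutators $[x^\alpha,-\triangle]$, which reduce the weight by one at the cost of an extra gradient, itself controlled via $[\nabla^j,\lambda V]$; the assumption $V\in W^{m,\infty}$ is precisely what is needed to close the induction at order $m+1$ and to obtain the dependences on the norms of $u^\circ$ displayed in the statement. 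The moment of $U_\lambda^\ell$ will be the source of the $\kappa^\ell$-dependent sum; once it is controlled, a further Young-type step $a^{1/(m+1)}(b+c)^{m/(m+1)}\lesssim a+b+a^{1/(m+1)}c^{m/(m+1)}$ with $a=\|f\|_{\Ld^2}$, $b=M_{m+1}^t(u_\lambda)$, $c=M_{m+1}^t(U_\lambda^\ell)$ generates exactly the first two terms $\|f\|_{\Ld^2}^{1/(m+1)}+\|f\|_{\Ld^2}$ in the statement, the remaining coupling term being itself dominated by the $\kappa^\ell$-sum.

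Both $M_{m+1}^t(U_\lambda^\ell)$ and the second piece $M_m^t(U_\lambda^\ell-u_0)$ reduce, after Plancherel and the equivalence $\||\cdot|^n g\|_{\Ld^2}\simeq\sum_{|\alpha|=n}\|\partial_k^\alpha\hat g\|_{\Ld^2}$, to estimating $t^{-n}\|\partial_k^\alpha[(e^{-it\kappa_{k,\lambda}^\ell}-\varepsilon)\hat u_0^t]\|_{\Ld^2}$ for $\varepsilon\in\{0,1\}$ and $n\in\{m,m+1\}$, where $\hat u_0^t(k):=e^{-it|k|^2}\hat u^\circ(k)$. Leibniz splits the derivative between the two factors, and Fa\`a di Bruno further writes each $\partial_k^\beta e^{-it\kappa^\ell}$ as a sum over set partitions of $\beta$: a partition into $r$ blocks of sizes $l_1,\ldots,l_r$ (with $j:=\sum_i l_i=|\beta|$) contributes $(-it)^r e^{-it\kappa^\ell}\prod_i\partial_k^{l_i}\kappa^\ell$. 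I absorb the time growth by writing the complementary derivative as $\partial_k^{\alpha-\beta}\hat u_0^t=t^{n-j}(t^{-1}\nabla_k)^{\alpha-\beta}\hat u_0^t$ and using the pointwise bound $|(t^{-1}\nabla_k)^s\hat u_0^t|\lesssim|\langle t^{-1}\nabla\rangle^{m+1}\hat u_0^t|$ for $t\ge 1$ (immediate from $\partial_k^{s_1}e^{-it|k|^2}=t^{s_1}\cdot\mathrm{poly}(k)\cdot e^{-it|k|^2}$); the net $t$-factor is $t^{r-j}\le1$ since $r\le j$.

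The only delicate combinatorial step is then to estimate the product $\prod_{i=1}^r|\partial_k^{l_i}\kappa^\ell|$ with $\sum_i l_i=j$ by Young's inequality with the conjugate exponents $p_i:=j/l_i$ (for which $\sum_i 1/p_i=\sum_i l_i/j=1$),
\[
\prod_{i=1}^r|\partial_k^{l_i}\kappa^\ell|\,\le\,\sum_{i=1}^r\tfrac{l_i}{j}|\partial_k^{l_i}\kappa^\ell|^{j/l_i}\,\lesssim\,\sum_{l=1}^j|\partial_k^l\kappa^\ell|^{j/l}.
\]
Squaring, multiplying by $|\langle t^{-1}\nabla\rangle^{m+1}\hat u_0^t|^2$, integrating in $k$ and taking the square root produces exactly the claimed summand $\big(\int|\nabla_k^l\kappa^\ell|^{2j/l}|\langle t^{-1}\nabla\rangle^{m+1}\hat u_0^t|^2\,d^*k\big)^{1/2}$; summing over $l\in\{1,\ldots,j\}$ and $j\in\{1,\ldots,m+1\}$ collects all Leibniz--Fa\`a di Bruno contributions. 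I expect this Young-inequality bookkeeping --- which dictates the sharp exponent $2j/l$ --- to be the main obstacle; the remainder consists of routine Schr\"odinger moment estimates and Plancherel expansions.
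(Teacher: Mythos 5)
Your first-step decomposition is where the argument breaks down. You replace $|M_m^t(u_\lambda)-M_m^t(u_0)|$ by $M_m^t(u_\lambda-U_\lambda^\ell)+M_m^t(U_\lambda^\ell-u_0)$, and for the second piece you then expand $\nabla_k^\alpha\big[(e^{-it\kappa_{k,\lambda}^\ell}-1)\,\hat u_0^t\big]$ via Leibniz and Fa\`a di Bruno. The Fa\`a di Bruno bookkeeping and the Young step with exponents $j/l_i$ are fine for all terms where at least one derivative lands on the phase. But the $\beta=\emptyset$ Leibniz term, $(e^{-it\kappa_{k,\lambda}^\ell}-1)\,\nabla_k^\alpha\hat u_0^t$, carries no derivative of $\kappa^\ell$ and hence is not majorized by anything of the form $\sum_{l\ge1}|\nabla_k^l\kappa_{k,\lambda}^\ell|^{j/l}$. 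Its contribution to $M_m^t(U_\lambda^\ell-u_0)$ is of order $t^{-m}\|(e^{-it\kappa^\ell}-1)\nabla_k^m\hat u_0^t\|_{\Ld^2}\simeq\|(e^{-it\kappa^\ell}-1)\,|\cdot|^m\,\hat u^\circ\|_{\Ld^2}$, which is of order~$1$ as soon as $t\gtrsim\lambda^{-1}$ (since $\kappa^\ell\simeq\lambda$). In that same regime the right-hand side of the proposition vanishes as $\lambda\downarrow0$ (under the hypotheses of Theorem~\ref{th:main-quasi}, $\|u_\lambda^t-U_\lambda^{\ell;t}\|_{\Ld^2}=O(\lambda^\gamma)$, and $\nabla_k^l\kappa_{k,\lambda}^\ell=O(\lambda)$). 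Hence $M_m^t(U_\lambda^\ell-u_0)$ genuinely cannot be bounded by the claimed quantity, and the estimate fails at this step. This is precisely the point stressed in the text just before the proposition: the moments are close even though $U_\lambda^{\ell;t}$ is \emph{not} $\Ld^2$-close to $u_0^t$ for $t\gtrsim\lambda^{-1}$.

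The remedy is to retain the quantity $|M_m^t(U_\lambda^\ell)-M_m^t(u_0)|$ rather than upgrading it to $M_m^t(U_\lambda^\ell-u_0)$, and to exploit the unimodularity of $e^{-it\kappa_{k,\lambda}^\ell}$ \emph{before} Leibniz. Writing $\hat U_\lambda^{\ell;t}=e^{-it\kappa_{k,\lambda}^\ell}\hat u_0^t$ and noting $\|e^{-it\kappa_{\cdot,\lambda}^\ell}\nabla_k^m\hat u_0^t\|_{\Ld^2}=\|\nabla_k^m\hat u_0^t\|_{\Ld^2}$, one gets
\[
|M_m^t(U_\lambda^\ell)-M_m^t(u_0)|\,\le\,t^{-m}\big\|\nabla_k^m\hat U_\lambda^{\ell;t}-e^{-it\kappa_{\cdot,\lambda}^\ell}\nabla_k^m\hat u_0^t\big\|_{\Ld^2},
\]
in which the $j=0$ term has been cancelled exactly; only $j\ge1$ derivatives of $\kappa^\ell$ survive, and your Fa\`a di Bruno and Young machinery then delivers exactly the stated sum. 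Beyond this correction, your route is sound and genuinely different from the paper's: for the first piece you obtain the exponent $\tfrac1{m+1}$ directly by H\"older interpolation $\|\phi^m f\|_{\Ld^2}\le\|f\|_{\Ld^2}^{1/(m+1)}\|\phi^{m+1}f\|_{\Ld^2}^{m/(m+1)}$ combined with the a~priori moment bound $M_{m+1}^t(u_\lambda)\lesssim1$ (Lemma~\ref{prop:moments}, which is the needed commutator estimate and is exactly where $V\in W^{m,\infty}$ enters), whereas the paper uses a Gaussian ballistic truncation $\tilde M_m^t(\cdot;C_0)$ (Corollary~\ref{cor:moment-trunc}) and optimizes over $C_0\ge1$. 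The two mechanisms produce the same exponent; the paper's truncation device has the advantage of being recycled verbatim (with smooth spectral cut-offs) in the proof of Corollary~\ref{th:main-quasi-tsp}.
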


Theorem~\ref{th:main-quasi} will follow from Proposition~\ref{prop:taylorbloch} and Proposition~\ref{prop:cor-QP} together with an optimization in the truncation parameter $\ell$ and an approximation argument for the initial data.
Corollary~\ref{th:main-quasi-tsp} will follow from Proposition~\ref{prop:ball-tsp-weak} together with further approximation arguments.
In the following section we split the proof of Proposition~\ref{prop:taylorbloch} into a string of lemmas, which are then proved in the subsequent sections. Next, we turn to the proof of Proposition~\ref{prop:ball-tsp-weak} and we conclude the section with the proofs of Theorem~\ref{th:main-quasi}, Corollary~\ref{th:main-quasi-tsp}, and Corollary~\ref{th:main-quasi-tsp-2}.

 \subsection{Structure of the proof of Proposition~\ref{prop:taylorbloch}}
As motivated in Section~\ref{chap:blochballi}, we start by considering the following {approximate Bloch expansion} of the initial data $u^\circ$,
\begin{align}\label{eq:taylblochIC}
W_{\lambda}^{\ell;\circ}(x):=\int_{\R^d} e^{ik\cdot x}\,\psi_{k,\lambda}^{\ell}(x)\,\hat u^\circ(k)\,d^*k.
\end{align}
For small $\lambda$, we can indeed formally replace the Fourier modes $x\mapsto e^{ik\cdot x}$ by the corresponding approximate Bloch waves $x\mapsto e^{ik\cdot x}\psi_{k,\lambda}^\ell(x)$. 
The following lemma quantifies the resulting error and its propagation in time.
\begin{lem}[Preparation of initial data]\label{lem:closeinitcond}
In the setting of Proposition~\ref{prop:taylorbloch}, we define $W_{\lambda}^{\ell;\circ}$ as in~\eqref{eq:taylblochIC} and we denote by $W_{\lambda}^{\ell}$ the solution of
\begin{align}\label{eq:approx-ini-cond-v}
i\partial_t W_{\lambda}^{\ell}=(-\triangle+\lambda V)W_{\lambda}^{\ell},\qquad W_{\lambda}^{\ell}|_{t=0}=W_{\lambda}^{\ell;\circ}.
\end{align}
Then, for all $\lambda\le\frac12\big(CRK^{s_0+M}\ell^{s_0}\|\F \tilde V\|_{ \Ld^\infty}\big)^{-1}$ and $t\ge0$,
\begin{equation*}
\sup_\Omega\sup_{0\le\lambda'\le\lambda}\big\|u_{\lambda'}^t-W_{\lambda'}^{\ell;t}\big\|_{\Ld^2}
\,\le\,\lambda CR K^{s_0+2M+1}\ell^{s_0+M+1}\|\F \tilde V\|_{\Ld^\infty} \|u^\circ\|_{\Ld^2}.\qedhere
\end{equation*}
\end{lem}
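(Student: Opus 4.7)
The crucial observation is that both $u_{\lambda'}$ and $W_{\lambda'}^{\ell}$ satisfy the \emph{same} Schr\"odinger equation with potential $\lambda'V(\cdot,\omega)$; only the initial data differ. Since $-\triangle+\lambda'V$ is (essentially) self-adjoint on $\Ld^2(\R^d)$ for each fixed $\omega$, the evolution $e^{-it(-\triangle+\lambda'V(\cdot,\omega))}$ is unitary, so
\[
\bigl\|u_{\lambda'}^t-W_{\lambda'}^{\ell;t}\bigr\|_{\Ld^2(\R^d)}\,=\,\bigl\|u^\circ-W_{\lambda'}^{\ell;\circ}(\cdot,\omega)\bigr\|_{\Ld^2(\R^d)}
\]
for every $\omega\in\Omega$ and every $t\ge 0$. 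This reduces the lemma to a \emph{time-independent} bound on the preparation error of the initial data, uniform in $\omega$ and in $0\le\lambda'\le\lambda$.

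To estimate this preparation error, I would plug in the definition \eqref{e.def:TBW} of $\psi_{k,\lambda'}^\ell=\sum_{n=0}^\ell(\lambda')^n\phi_k^n$ (using $\phi_k^0\equiv 1$) to rewrite
\[
u^\circ(x)-W_{\lambda'}^{\ell;\circ}(x,\omega)\,=\,-\sum_{n=1}^\ell(\lambda')^n\int_{\R^d}e^{ik\cdot x}\,\phi_k^n(x,\omega)\,\hat u^\circ(k)\,d^*k.
\]
Since $\hat u^\circ$ is supported in $\R^d\setminus\Rc_R^{K\ell}$ and the sets $\Rc_R^{Kn}$ are increasing in $n$, the hypothesis that $\hat u^\circ$ is supported outside $\Rc_R^{Kn}$ holds for every $1\le n\le\ell$. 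Hence I can apply estimate~\eqref{eq:est-cor-add-QP} of Proposition~\ref{prop:cor-QP} with $j=s=0$ to each term, obtaining a bound, uniform in $\omega$,
\[
\Big\|\int_{\R^d}e^{ik\cdot x}\,\phi_k^n(x,\omega)\,\hat u^\circ(k)\,d^*k\Big\|_{\Ld^2(\R^d)}\,\le\,(CK\ell)^{M+1}\bigl(CRK^{s_0+M}\ell^{s_0}\bigr)^n\|\F\tilde V\|_{\Ld^\infty}^n\,\|u^\circ\|_{\Ld^2},
\]
where I absorbed $(CKn)^{M+1}\le(CK\ell)^{M+1}$ into a common prefactor and invoked Plancherel $\|\hat u^\circ\|_{\Ld^2(d^*k)}=\|u^\circ\|_{\Ld^2}$.

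Summing by the triangle inequality then gives
\[
\bigl\|u^\circ-W_{\lambda'}^{\ell;\circ}(\cdot,\omega)\bigr\|_{\Ld^2}\,\le\,(CK\ell)^{M+1}\|u^\circ\|_{\Ld^2}\sum_{n=1}^\ell\Bigl(\lambda'\,CRK^{s_0+M}\ell^{s_0}\|\F\tilde V\|_{\Ld^\infty}\Bigr)^n,
\]
and the smallness assumption $\lambda\le\tfrac12(CRK^{s_0+M}\ell^{s_0}\|\F\tilde V\|_{\Ld^\infty})^{-1}$ makes the ratio of this geometric series bounded by $\tfrac12$, so the sum is controlled by twice its first term. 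Collecting factors yields exactly
\[
\lambda\,CRK^{s_0+2M+1}\ell^{s_0+M+1}\|\F\tilde V\|_{\Ld^\infty}\|u^\circ\|_{\Ld^2},
\]
as announced. All the analytic work has already been carried out in Proposition~\ref{prop:cor-QP}; the only mild subtlety here is to verify that the support condition on $\hat u^\circ$ is strong enough (support outside $\Rc_R^{K\ell}$) to legitimately invoke~\eqref{eq:est-cor-add-QP} for \emph{every} $n\le\ell$ simultaneously, and to keep the uniformity in $\omega$ throughout. There is no genuine obstacle beyond careful bookkeeping of the constants.
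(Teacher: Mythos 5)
Your proof is correct and follows essentially the same approach as the paper: reduce by unitarity of the Schr\"odinger propagator to a time-independent bound on $\|u^\circ-W_{\lambda'}^{\ell;\circ}\|_{\Ld^2}$, expand $\psi_{k,\lambda'}^\ell$ via the correctors, apply \eqref{eq:est-cor-add-QP} with $j=s=0$ (noting the increasing inclusions $\Rc_R^{Kn}\subset\Rc_R^{K\ell}$), and sum the resulting geometric series using the smallness assumption on $\lambda$. The bookkeeping of constants matches the paper's.
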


Next, starting with the approximate Bloch expansion $W_{\lambda}^{\ell;\circ}$ of the initial data and using that approximate Bloch waves approximately diagonalize the Schrödinger operator (cf.~Lemma~\ref{lem:eqnssumcorr}), we arrive at an approximate Bloch expansion of the Schrödinger flow.

\begin{lem}[Approximate diagonalization]\label{lem:almdiag}
In the setting of Proposition~\ref{prop:taylorbloch}, denote by $W_{\lambda}^{\ell}$ the solution of~\eqref{eq:approx-ini-cond-v} and let $V_{\lambda}^\ell$ be given by
\begin{align}\label{eq:def-Vlambd-apprBloch}
V_{\lambda}^{\ell;t}(x)\,:=\,\int_{\R^d} e^{-it(|k|^2+\kappa_{k,\lambda}^\ell)}\,e^{ik\cdot x}\,\psi_{k,\lambda}^\ell(x)\,\hat u^\circ(k)\,d^*k.
\end{align}
Then, for all $\lambda\le\frac12\big(CRK^{s_0+M}\ell^{s_0}\|\F \tilde V\|_{\Ld^\infty}\big)^{-1}$ and $t\ge0$,
\begin{equation*}
\sup_\Omega\sup_{0\le\lambda'\le\lambda}\|W_{\lambda'}^{\ell;t}-V_{\lambda'}^{\ell;t}\|_{\Ld^2}\,\le\,\lambda^{\ell+1}t(K\ell)^{M+1}(CRK^{s_0+M}\ell^{s_0})^\ell \|\F \tilde V\|_{ \Ld^\infty}^{\ell+1} \|u^\circ\|_{\Ld^2}.\qedhere
\end{equation*}
\end{lem}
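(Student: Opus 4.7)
The strategy is to show that $V_\lambda^\ell$ satisfies the Schr\"odinger equation~\eqref{eq:approx-ini-cond-v} up to a small residual, match the initial data, and then conclude by Duhamel and unitarity.

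First, I would differentiate $V_\lambda^{\ell;t}$ in $t$ and apply $\Lc_\lambda=-\triangle+\lambda V$ under the integral. Using the conjugation identity $(-\triangle+\lambda V)(e^{ik\cdot x}f)(x)=e^{ik\cdot x}(|k|^2-\triangle_k+\lambda V)f(x)=e^{ik\cdot x}(|k|^2+\Lc_{k,\lambda})f(x)$ together with the eigendefect relation of Lemma~\ref{lem:eqnssumcorr}, namely $\Lc_{k,\lambda}\psi_{k,\lambda}^\ell=\kappa_{k,\lambda}^\ell\psi_{k,\lambda}^\ell+\lambda^{\ell+1}\df_{k,\lambda}^\ell$, I would obtain
\[
i\partial_tV_\lambda^{\ell;t}-\Lc_\lambda V_\lambda^{\ell;t}\,=\,-\lambda^{\ell+1}\Rc_\lambda^{\ell;t},\qquad \Rc_\lambda^{\ell;t}(x,\omega):=\int_{\R^d}e^{-it(|k|^2+\kappa_{k,\lambda}^\ell)}e^{ik\cdot x}\df_{k,\lambda}^\ell(x,\omega)\hat u^\circ(k)\,d^*k,
\]
while $V_\lambda^{\ell;0}=W_\lambda^{\ell;\circ}$ matches the initial condition of $W_\lambda^\ell$.

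Second, since for each fixed $\omega$ the operator $\Lc_\lambda$ is self-adjoint on $\Ld^2(\R^d)$ (by the Faris--Lavine argument invoked in Section~\ref{chap:blochth-limited}), Duhamel's principle and unitarity of $e^{-it\Lc_\lambda}$ on $\Ld^2(\R^d)$ give
\[
\big\|W_\lambda^{\ell;t}-V_\lambda^{\ell;t}\big\|_{\Ld^2(\R^d)}\,\le\,\lambda^{\ell+1}\int_0^t\|\Rc_\lambda^{\ell;s}\|_{\Ld^2(\R^d)}\,ds.
\]
The phase factor $e^{-is(|k|^2+\kappa_{k,\lambda}^\ell)}$ has modulus one, so it acts as a unitary multiplier on $\Ld^2(\R^d)$ in the Fourier variable $k$. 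Consequently it suffices to bound $\sup_\omega\|\Rc_\lambda^{\ell;s}\|_{\Ld^2(\R^d)}$ uniformly in $s$.

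Third, I would plug in the explicit decomposition of $\df_{k,\lambda}^\ell$ from Lemma~\ref{lem:eqnssumcorr} and estimate each term. The contributions of the form $\nu_k^l\phi_k^n$ are handled by combining the uniform bound~\eqref{eq:est-nu-QP} on $\nu_k^l$ (which is a scalar, hence pulls through as a Fourier multiplier) with the key estimate~\eqref{eq:est-cor-add-QP} applied to $\phi_k^n$ with $j=s=0$, producing the factor $(CK\ell)^{M+1}(CRK^{s_0+M}\ell^{s_0})^n\|\F\tilde V\|_{\Ld^\infty}^n$. The only delicate piece is the term $\Pi V\phi_k^\ell$: here I would use the Rayleigh--Schr\"odinger recurrence~\eqref{eq:jetbloch} at order $\ell+1$, which gives $\Pi V\phi_k^\ell=\triangle_k\phi_k^{\ell+1}+\sum_{l=0}^{\ell-1}\nu_k^l\phi_k^{\ell-l}$, so this term reduces to the same estimates after constructing one additional jet (and noting $\|\triangle_k\phi_k^{\ell+1}\|_{H^s}$ is controlled by~\eqref{eq:est-cor-add-QP} with two spatial derivatives, which merely costs another factor of $(CK\ell)^2$, absorbed in the constants). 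Summing the $O(\ell)$ resulting terms and keeping the largest factor yields
\[
\sup_\omega\|\Rc_\lambda^{\ell;s}\|_{\Ld^2}\,\le\,(K\ell)^{M+1}(CRK^{s_0+M}\ell^{s_0})^\ell\|\F\tilde V\|_{\Ld^\infty}^{\ell+1}\|u^\circ\|_{\Ld^2},
\]
uniformly in $\omega$, $s$, and in $0\le\lambda'\le\lambda$ (since the smallness assumption on $\lambda$ is used only to ensure the geometric-type sums defining $\psi_{k,\lambda}^\ell$ and the corresponding series are under control). Integrating from $0$ to $t$ then produces the linear-in-$t$ factor in the announced bound.

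The main obstacle is the careful bookkeeping of the prefactors: checking that the composition of~\eqref{eq:est-nu-QP} (for the $\nu_k^l$'s) and~\eqref{eq:est-cor-add-QP} (for the $\phi_k^n$'s inside~$\df_{k,\lambda}^\ell$) produces exactly the powers $(K\ell)^{M+1}(CRK^{s_0+M}\ell^{s_0})^\ell\|\F\tilde V\|_{\Ld^\infty}^{\ell+1}$, with all lower-order terms absorbed using the smallness $\lambda\|\F\tilde V\|_{\Ld^\infty}\lesssim (RK^{s_0+M}\ell^{s_0})^{-1}$.
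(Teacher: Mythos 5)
The overall architecture of your argument (differentiating $V_\lambda^{\ell;t}$, using Lemma~\ref{lem:eqnssumcorr} to extract the eigendefect $\lambda^{\ell+1}\df_{k,\lambda}^\ell$ as source, then Duhamel + unitarity, then plugging in the corrector bounds) is exactly the paper's strategy, and most of the bookkeeping is on the right track. However, your handling of the term $\Pi V\phi_k^\ell$ has a genuine gap.

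You propose to use the recurrence~\eqref{eq:jetbloch} at order $\ell+1$ to rewrite $\Pi V\phi_k^\ell-\sum_{l=0}^{\ell-1}\nu_k^l\phi_k^{\ell-l}=\triangle_k\phi_k^{\ell+1}$ and then estimate $\phi_k^{\ell+1}$ via~\eqref{eq:est-cor-add-QP}. This runs into two difficulties that the paper's approach avoids. First, the hypothesis of Proposition~\ref{prop:taylorbloch} only guarantees the conclusions of Proposition~\ref{prop:cor-QP} for $1\le n\le\ell$, and even if one invokes Proposition~\ref{prop:cor-QP} again, the bound on $\phi_k^{\ell+1}$ requires $k\in\R^d\setminus\Rc_R^{K(\ell+1)}$. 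Since $\Rc_R^{K\ell}\subsetneq\Rc_R^{K(\ell+1)}$ and $\hat u^\circ$ is only assumed supported in $\R^d\setminus\Rc_R^{K\ell}$, there may be $k\in\supp\hat u^\circ$ where the estimate on $\phi_k^{\ell+1}$ is simply not available (the corrector $\phi_k^{\ell+1}$ contains Fourier modes $\xi$ with $K\ell<|\xi|\le K(\ell+1)$, for which the lower bound~\eqref{eq:dioph} on the fibered symbol is not guaranteed on the support of $\hat u^\circ$). Second, the operator $\triangle_k=\triangle+2ik\cdot\nabla$ brings a factor $|k|$, so that applying~\eqref{eq:est-cor-add-QP} with $s=1$ would leave an uncontrolled factor $\sup_{\supp\hat u^\circ}|k|$; the conclusion of the Lemma is required to depend on $u^\circ$ only through $\|u^\circ\|_{\Ld^2}$, so this factor cannot be hidden in the constant.

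The paper avoids both problems by estimating $\Pi V\phi_k^\ell$ directly, not by inverting $-\triangle_k$ once more: multiplication by $V$ is a bounded operator on the relevant $H^a(\T^M)$ spaces with norm $\lesssim K^{a+M}\|\F\tilde V\|_{\Ld^\infty}$ (since $\F\tilde V$ is supported in $B_K$), so the term $V\phi_k^\ell$ is controlled using only the bounds~\eqref{eq:est-cor-QP}--\eqref{eq:est-cor-add-QP} on $\phi_k^\ell$ itself, at the price of one extra factor of $\|\F\tilde V\|_{\Ld^\infty}$ and a power of $K$ that is absorbed by the $(CRK^{s_0+M}\ell^{s_0})^\ell$ prefactor. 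In other words, the multiplication by $V$ shifts frequencies by at most $K$ but requires no further Diophantine control, whereas your reformulation trades the harmless multiplication for a fresh inversion of $-\triangle_k$ on a strictly larger frequency range. Your plan could be repaired by strengthening the support assumption on $\hat u^\circ$ to $\R^d\setminus\Rc_R^{K(\ell+1)}$ and tracking a $|k|$-dependent constant, but then the downstream truncation estimate~\eqref{eq:approx-10-qp} and the optimization over $(\ell,R,K)$ in the proof of Theorem~\ref{th:main-quasi} would need to be adjusted.
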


In turn, the approximate Bloch waves $x\mapsto e^{ik\cdot x}\psi_{k,\lambda}^\ell(x)$ can be replaced  by the Fourier modes $x\mapsto e^{ik\cdot x}$, which leads to the expected effective flow $U_\lambda^\ell$.
\begin{lem}\label{lem:throw-away}
In the setting of Proposition~\ref{prop:taylorbloch}, let $V_\lambda^\ell$ be defined as in~\eqref{eq:def-Vlambd-apprBloch} and let $U_\lambda^\ell$ be given by
\[U_{\lambda}^{\ell;t}(x)\,:=\,\int_{\R^d} e^{-it(|k|^2+\kappa_{k,\lambda}^\ell)}\,e^{ik\cdot x}\,\hat u^\circ(k)\,d^*k.\]
Then, for all $\lambda\le\frac12\big(CRK^{s_0+M}\ell^{s_0}\|\F \tilde V\|_{\Ld^\infty}\big)^{-1}$ and $t\ge0$,
\begin{equation*}
\sup_\Omega\sup_{0\le\lambda'\le\lambda}\|V_{\lambda'}^{\ell;t}-U_{\lambda'}^{\ell;t}\|_{\Ld^2}\,\le\,\lambda CRK^{s_0+2M+1}\ell^{s_0+M+1}\|\F \tilde V\|_{ \Ld^\infty} \|u^\circ\|_{\Ld^2}.\qedhere
\end{equation*}
\end{lem}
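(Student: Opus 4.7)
The strategy is elementary: expand $\psi_{k,\lambda'}^\ell$ around $1$ and apply the corrector estimate \eqref{eq:est-cor-add-QP} term by term. By Definition~\ref{def:taylor-waves}, $\psi_{k,\lambda'}^\ell=1+\sum_{n=1}^\ell(\lambda')^n\phi_k^n$, so comparing~\eqref{eq:def-Vlambd-apprBloch} with the definition of $U_{\lambda'}^{\ell;t}$ gives
\[
V_{\lambda'}^{\ell;t}(x,\omega)-U_{\lambda'}^{\ell;t}(x)
=\sum_{n=1}^\ell(\lambda')^n\int_{\R^d}e^{-it(|k|^2+\kappa_{k,\lambda'}^\ell)}\,e^{ik\cdot x}\,\phi_k^n(x,\omega)\,\hat u^\circ(k)\,d^*k.
\]
Since $\Lc_{k,\lambda'}$ is self-adjoint on $\Ld^2(\Omega)$, the Taylor-Bloch eigenvalues $\kappa_{k,\lambda'}^\ell$ are real (this is also visible from the tree formula of Proposition~\ref{prop:sol-nonlin-rec} via self-adjointness of each $(-\triangle_k)^{-1}$). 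Hence $\hat u_{t,\lambda'}(k):=e^{-it(|k|^2+\kappa_{k,\lambda'}^\ell)}\hat u^\circ(k)$ has the same $\Ld^2$ norm as $\hat u^\circ$, and it is still supported in $\R^d\setminus\mathcal R_R^{K\ell}\subset\R^d\setminus\mathcal R_R^{Kn}$ for every $n\le\ell$.

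Next I would apply~\eqref{eq:est-cor-add-QP} with $s=j=0$ and $\hat u=\hat u_{t,\lambda'}$ to each term, obtaining, uniformly in $\omega\in\Omega$,
\[
\Big\|\int_{\R^d}e^{ik\cdot x}\phi_k^n(x,\omega)\hat u_{t,\lambda'}(k)\,d^*k\Big\|_{\Ld^2(\R^d)}
\le (CKn)^{M+1}(CRK^{s_0+M}n^{s_0})^n\|\F\tilde V\|_{\Ld^\infty}^n\|u^\circ\|_{\Ld^2}.
\]
Summing over $n$ and using the smallness assumption $\lambda\le\frac12(CRK^{s_0+M}\ell^{s_0}\|\F\tilde V\|_{\Ld^\infty})^{-1}$, each factor $(\lambda')^n(CRK^{s_0+M}n^{s_0}\|\F\tilde V\|_{\Ld^\infty})^n$ is bounded by
\[
(\lambda CRK^{s_0+M}\ell^{s_0}\|\F\tilde V\|_{\Ld^\infty})^n\le\lambda CRK^{s_0+M}\ell^{s_0}\|\F\tilde V\|_{\Ld^\infty}\cdot 2^{-(n-1)}.
\]
Combined with $(CKn)^{M+1}\le(CK\ell)^{M+1}$ and the geometric sum $\sum_{n\ge1}2^{-(n-1)}=2$, this produces a total contribution of order $\lambda(CK\ell)^{M+1}\cdot CRK^{s_0+M}\ell^{s_0}\|\F\tilde V\|_{\Ld^\infty}$, which collapses (after absorbing powers of $C$) to $\lambda CRK^{s_0+2M+1}\ell^{s_0+M+1}\|\F\tilde V\|_{\Ld^\infty}\|u^\circ\|_{\Ld^2}$, as claimed. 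The uniformity in $\lambda'\le\lambda$ is automatic since the resulting bound is linear in $\lambda'$ and the smallness condition for $\lambda'$ is inherited from that for $\lambda$, while the uniformity in $\omega\in\Omega$ is inherited from the $\sup_\omega$ in~\eqref{eq:est-cor-add-QP}.

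There is no genuine obstacle here: the lemma is a straightforward consequence of the Taylor-Bloch corrector estimate~\eqref{eq:est-cor-add-QP} of Proposition~\ref{prop:cor-QP}. The only substantive check is that the oscillating factor is unitary in $k$, which boils down to the reality of $\kappa_{k,\lambda'}^\ell$; this is a structural fact about the fibered self-adjoint operator $\Lc_{k,\lambda}$ and requires no new work. In particular, all the non-trivial scaling in $\ell$, $R$ and $K$ is already packaged inside~\eqref{eq:est-cor-add-QP}, so the remaining task is just bookkeeping of the geometric series.
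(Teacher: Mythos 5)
Your proposal is correct and follows essentially the same route as the paper: the paper's proof is a one-liner writing $V^{\ell;t}_\lambda-U^{\ell;t}_\lambda=\sum_{n=1}^\ell\lambda^n\int e^{-it(|k|^2+\kappa_{k,\lambda}^\ell)}e^{ik\cdot x}\phi_k^n(x)\hat u^\circ(k)\,d^*k$ and then invoking the corrector estimate~\eqref{eq:est-cor-add-QP} exactly ``as in~\eqref{e.pr-well-prepared}'' of Lemma~\ref{lem:closeinitcond}, which is precisely the term-by-term application and geometric resummation you carry out. You additionally make explicit the fact that $\kappa_{k,\lambda'}^\ell\in\R$ so that the phase factor is unimodular and $\|\hat u_{t,\lambda'}\|_{\Ld^2}=\|\hat u^\circ\|_{\Ld^2}$; the paper uses this silently, and it is a genuine prerequisite for the transfer from~\eqref{e.pr-well-prepared} (which has no oscillatory factor) to the present lemma. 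One small caveat: reality of $\nu_k^n$ is not quite an immediate consequence of ``self-adjointness of each $(-\triangle_k)^{-1}$'' as you suggest; taking adjoints in a single term $\expecm{V(-\triangle_k)^{-b_1-1}\Pi V\cdots(-\triangle_k)^{-b_c-1}\Pi V}$ reverses the multi-index $b$, so reality is only seen after summing over all $b$ with $|b|$ fixed, a symmetry built into the tree formula. This is a detail, not a gap, and the rest of your bookkeeping (restricting the support to $\R^d\setminus\Rc_R^{Kn}$ for $n\le\ell$ via monotonicity of $\Rc_R^n$, and summing the geometric series using the smallness of $\lambda$) is exactly the paper's argument.
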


Proposition~\ref{prop:taylorbloch} follows from the decomposition $u_{\lambda}-U_{\lambda}^{\ell}=(u_{\lambda}-W_{\lambda}^{\ell})+(W_{\lambda}^{\ell}-V_{\lambda}^{\ell})+(V_{\lambda}^{\ell}-U_{\lambda}^{\ell})$, the triangle inequality, and the combination of Lemmas~\ref{lem:closeinitcond},~\ref{lem:almdiag}, and~\ref{lem:throw-away}.


\subsection{Proof of Lemma~\ref{lem:closeinitcond}: Preparation of initial data}

Since the difference $u_\lambda-W_{\lambda}^\ell$ satisfies
\[i\partial_t(u_\lambda-W_{\lambda}^\ell)=(-\triangle+\lambda V)(u_\lambda-W_{\lambda}^\ell),\qquad (u_\lambda-W_{\lambda}^\ell)|_{t=0}=u^\circ-W_{\lambda}^{\ell;\circ},\]
the unitarity of the Schrödinger flow yields for all $t\ge0$,
\begin{equation*} 
\| u_\lambda^t-W_{\lambda}^{\ell;t}\|_{\Ld^2}\,=\, \|u^\circ-W_{\lambda}^{\ell;\circ}\|_{\Ld^2},
\end{equation*}
so that it suffices to prove that for all $\lambda\le\frac12(CRK^{s_0+M}\ell^{s_0}\|\F \tilde V\|_{\Ld^\infty})^{-1}$,
\begin{equation}\label{eq:S1-lem1-tec}
\sup_\Omega\sup_{0\le\lambda'\le\lambda}\|u^\circ-W_{\lambda'}^{\ell;\circ}\|_{\Ld^2}
\,\le \,\lambda CRK^{s_0+2M+1}\ell^{s_0+M+1}\|\F \tilde V\|_{ \Ld^\infty}\|u^\circ\|_{\Ld^2}.
\end{equation}
By definition~\eqref{eq:taylblochIC} of~$W_{\lambda}^{\ell;\circ}$ and by Definition~\ref{def:taylor-waves},
\begin{align*} 
(u^\circ-W_{\lambda}^{\ell;\circ})(x)=-\sum_{n=1}^\ell\lambda^n\int_{\R^d} e^{ik\cdot x}\,\phi_k^n(x)\,\hat u^\circ(k)\,d^*k.
\end{align*}
Hence, by assumption~\eqref{eq:est-cor-add-QP},
\begin{multline}
{\sup_\Omega\sup_{0\le\lambda'\le\lambda}\|u^\circ-W_{\lambda'}^{\ell;\circ}\|_{\Ld^2}
\,\le\,\sum_{n=1}^\ell\lambda^n\,\sup_{\omega\in\Omega}\bigg(\int_{\R^d}\Big|\int_{\R^d} e^{ik\cdot x}\, \phi_k^n(x,\omega)\,\hat u^\circ(k)\,d^*k\Big|^2dx\bigg)^\frac12} \\
\,\lesssim \,K^{M+1}\| u^\circ\|_{\Ld^2}\sum_{n=1}^\ell\lambda^nn^{M+1} (CRK^{s_0+M}n^{s_0})^n \|\F \tilde V\|_{\Ld^\infty}^n,
\label{e.pr-well-prepared}
\end{multline}
and the claim~\eqref{eq:S1-lem1-tec} follows.\qed


\subsection{Proof of Lemma~\ref{lem:almdiag}: Approximate diagonalization}

We first claim that
\begin{equation}\label{eq:boundL2-util-v}
\sup_\Omega\sup_{0\le\lambda'\le\lambda}\, \sup_{0\le t\le T}\|W_{\lambda'}^{\ell;t}-V_{\lambda'}^{\ell;t}\|_{\Ld^2} \,\lesssim\, \int_0^T\lambda^{\ell+1}\sup_\Omega\sup_{0\le\lambda'\le\lambda}\|F_{\lambda'}^{\ell;t}\|_{\Ld^2}\,dt,
\end{equation}
in terms of
\begin{eqnarray}
F_{\lambda}^{\ell;t}(x)&:=&\int_{\R^d} e^{-it(|k|^2+\kappa_{k,\lambda}^\ell)}e^{ik\cdot x}\,\df^\ell_{k,\lambda}(x)\,\hat u^\circ(k)\,d^*k,
\label{eq:def-F-S1}
\end{eqnarray}
where $\df^\ell_{k,\lambda}$ denotes the eigendefect (cf.~Lemma~\ref{lem:eqnssumcorr}).
Indeed, by definition of $V_{\lambda}^\ell$ and by Lemma~\ref{lem:eqnssumcorr}, we find
\begin{eqnarray*}
\lefteqn{\big(i\partial_t+\triangle-\lambda V(x)\big) V_{\lambda}^{\ell;t}(x)}\\
&=&\int_{\R^d} e^{-it(|k|^2+\kappa_{k,\lambda}^\ell)}e^{ik\cdot x}\,\big((\triangle_k-\lambda V+\kappa_{k,\lambda}^\ell) \psi_{k,\lambda}^\ell\big)(x)\,\hat u^\circ(k)\,d^*k\\
&=&-\lambda^{\ell+1}F_{\lambda}^{\ell;t}(x),
\end{eqnarray*}
so that the difference $W_{\lambda}^\ell-V_{\lambda}^\ell$ satisfies
\begin{align*}
\big(i\partial_t+\triangle-\lambda V\big)(W_{\lambda}^{\ell}-V_{\lambda}^\ell)\,=\,\lambda^{\ell+1}F_{\lambda}^{\ell},\qquad(W_{\lambda}^{\ell}-V_{\lambda}^\ell)|_{t=0}=0.
\end{align*}
Duhamel's formula together with the unitarity of the Schrödinger flow then yields
\begin{equation*} 
 \sup_{0\le t\le T}\|W_{\lambda}^{\ell;t}-V_{\lambda}^{\ell;t}\|_{\Ld^2} \,\le\,
 \int_0^T  \lambda^{\ell+1} \| F_{\lambda}^{\ell;t}\|_{\Ld^2}\,dt,
\end{equation*}
and~\eqref{eq:boundL2-util-v} follows.
By~\eqref{eq:boundL2-util-v}, it now suffices to prove that for all $\lambda\le\frac12(CRK^{s_0+M}\ell^{s_0}\|\F \tilde V\|_{\Ld^\infty})^{-1}$ and $0\le t\le T$,
\begin{equation}\label{eq:bound-diag-pre-nabm}
\sup_\Omega\sup_{0\le\lambda'\le\lambda}\|F_{\lambda'}^{\ell;t}\|_{\Ld^{2}}
\,\lesssim\, (K\ell)^{M+1}(CRK^{s_0+M}\ell^{s_0})^\ell \|\F \tilde V\|_{\Ld^\infty}^{\ell+1}\|u^\circ\|_{\Ld^2}.
\end{equation}
By definition~\eqref{eq:def-F-S1} of $F_{\lambda}^\ell$ and by definition of $\df_{k,\lambda}^\ell$ (cf.~Lemma~\ref{lem:eqnssumcorr}), the assumptions~\eqref{eq:est-nu-QP} and~\eqref{eq:est-cor-add-QP} yield
\begin{multline*}
\|F_\lambda^{\ell;t}\|_{\Ld^2}
\,=\,\bigg(\int_{\R^d}\Big|\int_{\R^d} e^{-it(|k|^2+\kappa_{k,\lambda}^\ell)}e^{ik\cdot x}\, \df^\ell_{k,\lambda}(x)\,\hat u^\circ(k)\,d^*k\Big|^2dx\bigg)^\frac12\nonumber\\
\,\le\,(K\ell)^{M+1}(CRK^{s_0+M}\ell^{s_0})^\ell\|\F\tilde V\|_{\Ld^\infty}^{\ell+1}\|u^\circ\|_{\Ld^2}\\
+(K\ell)^{M+1}\|u^\circ\|_{\Ld^2}\sum_{n=1}^\ell\lambda^n(CRK^{s_0+M}\ell^{s_0})^{n+\ell}\|\F\tilde V\|_{\Ld^\infty}^{n+\ell+1},
\end{multline*}
and the claim~\eqref{eq:bound-diag-pre-nabm} follows.
\qed

\subsection{Proof of Lemma~\ref{lem:throw-away}}\label{sec:theor5i}

Since
\begin{eqnarray*}
(V_{\lambda}^{\ell;t}-U_{\lambda}^{\ell;t})(x)\,=\,\sum_{n=1}^\ell\lambda^n\int_{\R^d} e^{-it(|k|^2+\kappa_{k,\lambda}^\ell)}e^{ik\cdot x}\,\phi_{k}^n(x)\,\hat u^\circ(k)\,d^*k,
\end{eqnarray*}
the desired estimate directly follows from assumption~\eqref{eq:est-cor-add-QP} as in~\eqref{e.pr-well-prepared}.
\qed

\subsection{Proof of Proposition~\ref{prop:ball-tsp-weak}: Asymptotic ballistic transport}\label{sec:prop:tsp-weak}
Before proceeding to the proof, we recall the following a priori estimate for the Schrödinger flow in weighted norms. This result is due to Ozawa~\cite[Theorem~1]{Ozawa-91}.
\begin{lem}[\cite{Ozawa-91}]\label{prop:moments}
Given $z^\circ\in\Sc(\R^d)$, $F\in\Ld^\infty_\loc(\R^+;\Ld^2(\R^d))$, and a (real-valued) potential $V\in W^{m-1,\infty}(\R^d)$ with $m\ge1$, denote by $z\in\Ld^\infty(\R^+;\Ld^2(\R^d))$ the solution of the Schrödinger equation
\begin{equation}\label{e.prop:moments-1}
(i\partial_t+\triangle-V)z^t=F^t,\qquad z^t|_{t=0}=z^\circ.
\end{equation}
Then, for all $t\ge0$ and $i,j\ge0$ with $i+j=m$, we have
\begin{multline*}
\langle t\rangle^{-i}\|\langle\cdot\rangle^{i} \langle \nabla \rangle^{j} z^t\|_{\Ld^2}\,\lesssim_{m}\,\|\langle\nabla\rangle^{m}z^\circ\|_{\Ld^2}+\langle t\rangle^{-m}\|\langle\cdot\rangle^{m}z^\circ\|_{\Ld^2}\\
+\int_0^t\Big(\|\langle\nabla\rangle^{m}F^s\|_{\Ld^2}+\langle t\rangle^{-m}\|\langle\cdot\rangle^{m}F^s\|_{\Ld^2}\Big)ds,
\end{multline*}
where the multiplicative constant depends only on $d,m,\|V\|_{W^{m-1,\infty}}$.
\end{lem}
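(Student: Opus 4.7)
\smallskip
\noindent\textit{Sketch.}
My plan is to exploit the Galilean symmetry of the free Schr\"odinger flow to produce weighted energy estimates, treating the potential $V$ perturbatively via Duhamel. The central object is the Galilean generator $J_a(t):=x_a+2it\partial_a$ ($a=1,\ldots,d$), which commutes with the free Schr\"odinger operator $i\partial_t+\triangle$, satisfies $J_a|_{t=0}=x_a$ and $[J_a,\partial_b]=-\delta_{ab}$, and encodes the Galilean invariance of the unperturbed flow. The algebraic identity $x=J-2it\nabla$ gives, modulo lower-order terms arising from $[J,\nabla]$,
\[\langle t\rangle^{-i}\|\langle\cdot\rangle^i\langle\nabla\rangle^j z^t\|_{\Ld^2}\,\lesssim\,\sum_{k=0}^i\langle t\rangle^{-(i-k)}\|J^{i-k}\nabla^{j+k}z^t\|_{\Ld^2},\]
so I would reduce the claim to uniform-in-$t$ bounds on $\|J^\alpha\nabla^\beta z^t\|_{\Ld^2}$ for all multi-indices with $|\alpha|+|\beta|=m$.

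Next, I would apply $J^\alpha\nabla^\beta$ to \eqref{e.prop:moments-1}: since both $J$ and $\nabla$ commute with $i\partial_t+\triangle$, the function $w:=J^\alpha\nabla^\beta z$ satisfies a Schr\"odinger equation with real potential $V$ and source $[J^\alpha\nabla^\beta,V]z+J^\alpha\nabla^\beta F$. The energy identity (unitarity for real $V$) together with Duhamel yields
\[\|w^t\|_{\Ld^2}\,\le\,\|x^\alpha\nabla^\beta z^\circ\|_{\Ld^2}+\int_0^t\big(\|[J^\alpha\nabla^\beta,V]z^s\|_{\Ld^2}+\|J^\alpha\nabla^\beta F^s\|_{\Ld^2}\big)ds,\]
and the commutator is expanded by iterated use of $[J_a,V]=2it(\partial_aV)$ and $[\partial_a,V]=\partial_aV$ into a polynomial in $t$ of degree at most $|\alpha|$ whose coefficients are linear combinations of operators of the form $(\partial^\gamma V)\,J^{\alpha'}\nabla^{\beta'}$ with $|\alpha'|+|\beta'|+|\gamma|=m$ and $|\gamma|\ge 1$.

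I would then sum these Duhamel inequalities over $(\alpha,\beta)$ with $|\alpha|+|\beta|=m$, weighted by $\langle t\rangle^{-|\alpha|}$, and close the resulting linear Gronwall loop on
\[N(t)\,:=\,\sum_{|\alpha|+|\beta|\le m}\langle t\rangle^{-|\alpha|}\|J^\alpha\nabla^\beta z^t\|_{\Ld^2},\]
using that every commutator contribution is strictly lower order in $(J,\nabla)$-derivatives on $z$. The two extreme indices $k=0$ and $k=i$ in the preliminary expansion of $\langle\cdot\rangle^i$ at $t=0$ generate respectively the terms $\langle t\rangle^{-m}\|\langle\cdot\rangle^m z^\circ\|_{\Ld^2}$ and $\|\langle\nabla\rangle^m z^\circ\|_{\Ld^2}$ in the final bound, intermediate $k$ being controlled by interpolation between weighted $L^2$-Sobolev spaces; the same manipulation applied to the Duhamel integral produces the source term on the RHS.

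The hard part will be the bookkeeping of the commutators $[J^\alpha\nabla^\beta,V]$: the extremal contribution with $|\gamma|=m$ a priori calls for $\partial^mV$, exceeding the $W^{m-1,\infty}$ regularity assumed. This term however carries the maximal time factor $t^{|\alpha|}$ and reduces to a pure multiplication operator on $z$ (no surviving $(J,\nabla)$-derivatives), so its apparent time growth is cancelled by $\langle t\rangle^{-|\alpha|}$ in $N(t)$; an induction on $m$, or a reorganization of the commutator hierarchy using $[J,\nabla]=-1$ to redistribute one derivative away from $V$, will allow this top term to be reabsorbed against $N(s)$ and the estimate to be closed using only $\|V\|_{W^{m-1,\infty}}$.
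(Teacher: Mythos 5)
The paper gives no proof of this statement: it is simply imported as \cite[Theorem~1]{Ozawa-91}, so there is no internal argument to compare yours against. Your approach --- passing from $\langle x\rangle^i\langle\nabla\rangle^j$ to the Galilean vector fields $J_a=x_a+2it\partial_a$ via $x=J-2it\nabla$, commuting $J^\alpha\nabla^\beta$ through the equation, exploiting that multiplication by a real $V$ drops out of the $\Ld^2$-energy identity, and controlling the residual commutator $[J^\alpha\nabla^\beta,V]$ by an induction in the total order --- is precisely the strategy behind Ozawa's result, so in spirit you are reconstructing the right proof.

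There is, however, a genuine gap in your last paragraph. You rightly flag that the extremal commutator term, where all $m$ operators land on $V$, produces $\partial^\gamma V$ with $|\gamma|=m$, one derivative beyond $W^{m-1,\infty}$. The two escapes you propose do not close it as described. The relation $[J_a,\partial_b]=-\delta_{ab}$ is a scalar, not a differential operator: commuting $J$'s past $\nabla$'s reorders the string and creates lower-order strings of the same kind, but it never removes a derivative that has already fallen on $V$. Likewise a bare induction on $m$ does not help, because $\partial^m V$ appears at the top level of the $m$-th step itself and is not produced by the lower levels being invoked inductively. What is actually needed --- and what your sketch omits --- is an integration by parts inside the quadratic energy pairing $\Im\langle J^\alpha\nabla^\beta z,[J^\alpha\nabla^\beta,V]z\rangle$ (using the self-adjointness of $J_a$ and $i\partial_a$), so that one derivative is moved from $V$ onto the test function $J^\alpha\nabla^\beta z$ and absorbed, at the price of only lower-order terms. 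Without writing this step out, your argument as given only proves the estimate under $V\in W^{m,\infty}$, i.e.\ with one more derivative than the statement allows; the stated $W^{m-1,\infty}$ dependence is not obtained.
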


With this estimate at hand, we show that the rescaled moments $M_m^t(u_\lambda)$ of the Schrödinger flow (cf.~\eqref{eq:def-moment-Mm}) can be truncated in a ballistic scaling. For $C_0\ge1$, we define the ballistically truncated moment via
\begin{align}\label{eq:trunc-mom}
\tilde M_m^t(u_\lambda;C_0)\,:=\,\big\|\big(\tfrac{|\cdot|}{t}\big)^me^{-\frac12(\frac{|\cdot|}{C_0t})^2}u_\lambda^t\big\|_{\Ld^2}.
\end{align}
This is the starting point for the proof of Proposition~\ref{prop:ball-tsp-weak}.

\begin{cor}[Ballistic truncation]\label{cor:moment-trunc}
Given $u^\circ\in\Sc(\R^d)$ and a (real-valued) potential $V\in W^{m,\infty}(\R^d)$ with $m\ge1$, denote by $u\in\Ld^\infty(\R^+;\Ld^2(\R^d))$ the solution of the Schrödinger flow
\[i\partial_tu=(-\triangle+V)u,\qquad u|_{t=0}=u^\circ.\]
Then, for all $C_0\ge1$ and $t\ge0$, we have
\[\big|M_m^t(u)-\tilde M_m^t(u;C_0)\big|\,\lesssim_m\,C_0^{-1}\big(\|\langle\nabla\rangle^{m+1}u^\circ\|_{\Ld^2}+\|\langle\cdot\rangle^{m+1}u^\circ\|_{\Ld^2}\big),\]
where the multiplicative constant depends only on $d,m,\|V\|_{W^{m,\infty}}$.
\end{cor}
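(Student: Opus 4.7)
My plan is to bound the difference directly as the $\Ld^2$-norm of a single function, extract the desired factor $C_0^{-1}$ via an elementary pointwise inequality, and then invoke the available weighted estimate on $u^t$. First, by the reverse triangle inequality applied to the $\Ld^2$-norms defining $M_m^t(u)$ and $\tilde M_m^t(u;C_0)$, one immediately gets
\[
|M_m^t(u) - \tilde M_m^t(u;C_0)| \,\le\, \bigg\|\Big(\tfrac{|\cdot|}{t}\Big)^m\Big(1 - e^{-\frac{1}{2}(|\cdot|/(C_0 t))^2}\Big)\,u^t\bigg\|_{\Ld^2}.
\]
The key elementary observation is $1 - e^{-s^2/2} \le s$ for all $s \ge 0$, which follows from $e^{-s^2/2} \ge 1 - s^2/2 \ge 1-s$ on $[0,1]$ and is trivial on $[1,\infty)$. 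Applied pointwise with $s = |x|/(C_0 t)$, this converts the Gaussian truncation into one extra power of $|x|$ together with the prefactor $1/(C_0 t)$, yielding
\[
|M_m^t(u) - \tilde M_m^t(u;C_0)| \,\le\, \frac{1}{C_0\,t^{m+1}}\,\big\|\langle\cdot\rangle^{m+1} u^t\big\|_{\Ld^2}.
\]

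To close the estimate, I would invoke Lemma~\ref{prop:moments} (Ozawa) with $m$ replaced by $m+1$, vanishing forcing $F\equiv 0$, and exponent split $(i,j)=(m+1,0)$; the regularity hypothesis $V\in W^{m,\infty}$ of the corollary is exactly what Ozawa requires when applied at order $m+1$. This gives
\[
\langle t\rangle^{-(m+1)}\,\|\langle\cdot\rangle^{m+1}\,u^t\|_{\Ld^2} \,\lesssim_m\, \|\langle\nabla\rangle^{m+1}u^\circ\|_{\Ld^2} + \langle t\rangle^{-(m+1)}\,\|\langle\cdot\rangle^{m+1}u^\circ\|_{\Ld^2},
\]
and combining the two displays, together with $t^{-(m+1)}\lesssim \langle t\rangle^{-(m+1)}$ valid for $t\ge 1$, yields exactly the claimed bound.

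The only subtle point, and the closest thing to an obstacle, is the implicit restriction to $t\ge 1$ via the comparison $t^{-(m+1)}\lesssim\langle t\rangle^{-(m+1)}$; this is harmless in context since the corollary is used only to prove the ballistic-transport asymptotics of Proposition~\ref{prop:ball-tsp-weak}, itself stated for $t\ge 1$. Overall the proof is a short bookkeeping argument: the entire content is squeezed out of the pointwise inequality $1-e^{-s^2/2}\le s$ (which turns the Gaussian tail into the $C_0^{-1}$ factor) composed with Ozawa's weighted estimate.
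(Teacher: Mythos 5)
Your proof is correct, and the overall reduction is the same as the paper's: first show that the truncation error is controlled by $C_0^{-1}M_{m+1}^t(u)$, then finish with Ozawa's weighted estimate applied at order $m+1$. Where you diverge is in the proof of the intermediate inequality~\eqref{eq:reducebis-moment-Schr}. The paper passes to Fourier space, rewrites $|\cdot|^m(1-\gamma_R)u$ via the convolution $\hat\gamma_R\ast\nabla^m\hat u$, and then applies Cauchy--Schwarz together with the second moment $\int|k|^2\hat\gamma_R(k)\,d^*k\lesssim R^{-2}$; you instead use the elementary pointwise bound $1-e^{-s^2/2}\le s$ to peel off one power of $|x|/(C_0t)$ directly in physical space. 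Your route is shorter and more transparent: it avoids the detour through Fourier transforms of Gaussians and the convolution computation, and it actually produces the inequality with constant $1$ rather than an unspecified multiplicative constant. Your remark about the comparison $t^{-(m+1)}\lesssim\langle t\rangle^{-(m+1)}$ forcing $t\ge1$ is well spotted; the paper glosses over the same point, and the restriction is indeed harmless since the corollary is only used for large times in the proof of Proposition~\ref{prop:ball-tsp-weak}.
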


\begin{proof}
We claim that it suffices to establish the following estimate,
\begin{gather}
\big|M_m^t(u)-\tilde M_m^t(u;C_0)\big|\,\lesssim\,C_0^{-1}M_{m+1}^t(u),\label{eq:cruc-ball-1}
\end{gather}
since then the conclusion follows from Lemma~\ref{prop:moments} in the form $M_{m+1}^t(u)\lesssim_m\|\langle\nabla\rangle^{m+1}u^\circ\|_{\Ld^2}+\|\langle\cdot\rangle^{m+1}u^\circ\|_{\Ld^2}$.
In order to prove~\eqref{eq:cruc-ball-1}, it suffices to show that for all $R>0$,
\begin{align}\label{eq:reducebis-moment-Schr}
\||\cdot|^m(1-\gamma_R)\,u\|_{\Ld^2}\,\lesssim\,R^{-1}\||\cdot|^{m+1}u\|_{\Ld^2},
\end{align}
in terms of the Gaussian cut-off $\gamma_R(x):=e^{-\frac12(\frac{|x|}{R})^2}$. For that purpose, we write in Fourier space
\begin{align*}
\||\cdot|^m(1-\gamma_R)\,u\|_{\Ld^2}^2\,=\,\int_{\R^d}\big|\nabla^m\hat u(k)-\hat\gamma_R\ast\nabla^m\hat u(k)\big|^2d^*k,
\end{align*}
where $\hat\gamma_{R} (k):=(\sqrt{2\pi}R)^de^{-\frac12 (R|k|)^2}$.
Since $\int_{\R^d}\hat\gamma_R(k)\,d^*k=1$ and $\int_{\R^d}|k|^2\hat\gamma_R(k)\,d^*k\lesssim R^{-2}$, the Cauchy-Schwarz inequality yields
\begin{eqnarray*}
\||\cdot|^m(1-\gamma_R)\,u\|_{\Ld^2}^2&=&\int_{\R^d}\Big|\int_{\R^d}\hat\gamma_R(k')\big(\nabla^m\hat u(k)-\nabla^m\hat u(k+k')\big)\,d^*k'\Big|^2d^*k\\
&\lesssim&R^{-2}\int_{\R^d}|\nabla^{m+1}\hat u(k)|^2\,d^*k,
\end{eqnarray*}
that is,~\eqref{eq:reducebis-moment-Schr}.
\end{proof}

We may now turn to the proof of Proposition~\ref{prop:ball-tsp-weak}.

\begin{proof}[Proof of Proposition~\ref{prop:ball-tsp-weak}]
Let $m\ge0$ be fixed. In this proof, we use the notation $\lesssim_{m,u^\circ}$ for $\le$ up to a multiplicative constant that only depends on $d$, $m$, $\|V\|_{W^{m,\infty}}$, $\|\langle\nabla\rangle^{m+1}u^\circ\|_{\Ld^2}$, and $\|\langle\cdot\rangle^{m+1}u^\circ\|_{\Ld^2}$.
The starting point is the triangle inequality in the following form, for all $C_0\ge1$,
\begin{multline*}
|M_m^t(u_\lambda)-M_m^t(u_0)|\,\le\,\big|M_m^t(u_\lambda)-\tilde M_m^t(u_\lambda;C_0)\big|+\big|\tilde M_m^t(u_\lambda;C_0)-\tilde M_m^t(U_\lambda^\ell;C_0)\big|\\
+\big|M_m^t(U_\lambda^\ell)-\tilde M_m^t(U_\lambda^\ell;C_0)\big|+\big|M_m^t(U_\lambda^\ell)-M_m^t(u_0)\big|.
\end{multline*}
Using Corollary~\ref{cor:moment-trunc} to estimate the first RHS term and using~\eqref{eq:cruc-ball-1} to estimate the third one, this yields
\begin{multline}\label{eq:first-decomp-Mmulambda}
|M_m^t(u_\lambda)-M_m^t(u_0)|\,\lesssim_{m,u^\circ}\,C_0^{-1}+C_0^m\|u_\lambda^t-U_\lambda^{\ell;t}\|_{\Ld^2}\\
+C_0^{-1}M_{m+1}^t(U_\lambda^\ell)+\big|M_m^t(U_\lambda^\ell)-M_m^t(u_0)\big|.
\end{multline}
It remains to prove the following estimates for all $t\ge1$:
\begin{gather}
M_{m+1}^t(U_\lambda^\ell)\,\lesssim_{m,u^\circ}\,1+\sum_{j=1}^{m+1}\sum_{l=1}^j\Big(\int_{\R^d}|\nabla_k^l\kappa_{k,\lambda}^\ell|^\frac{2j}l|(\tfrac1t\nabla)^{m+1-j}\hat u_0^t(k)|^2d^*k\Big)^\frac12,\label{eq:cruc-ball-2}\\
\big|M_m^t(U_\lambda^\ell)-M_m^t(u_0)\big|\,\lesssim_{m,u^\circ}\,\sum_{j=1}^m\sum_{l=1}^j\Big(\int_{\R^d}|\nabla_k^l\kappa_{k,\lambda}^\ell|^\frac{2j}l|(\tfrac1t\nabla)^{m-j}\hat u_0^t(k)|^2d^*k\Big)^\frac12,\label{eq:cruc-ball-3}
\end{gather}
Injecting these estimates into~\eqref{eq:first-decomp-Mmulambda} and optimizing wrt $C_0\ge1$, the conclusion follows.

We start with the proof of~\eqref{eq:cruc-ball-3},
for which we argue in Fourier space. By definition~\eqref{eq:uapprox} of $U_\lambda^{\ell}$, we have
\[\hat U_\lambda^{\ell;t}(k)\,=\,e^{-it(|k|^2+\kappa_{k,\lambda}^\ell)}\hat u^\circ(k)\,=\,e^{-it\kappa_{k,\lambda}^\ell}\,\hat u_0^t(k),\]
so that the Leibniz rule leads to
\[\big|\nabla^m\hat U_\lambda^{\ell;t}(k)-e^{-it\kappa_{k,\lambda}^\ell}\nabla^m\hat u_0^t(k)\big|\,\le\,\sum_{j=1}^m\binom{m}j |\nabla_k^{j}e^{-it\kappa_{k,\lambda}^\ell}||\nabla^{m-j}\hat u_0^t(k)|.\]
Integrating wrt $k$ and using the triangle inequality, we deduce
\[|M_m^t(U_\lambda^\ell)-M_m^t(u_0)|\,\lesssim_m\,t^{-m}\sum_{j=1}^m\Big(\int_{\R^d}|\nabla_k^{j}e^{-it\kappa_{k,\lambda}^\ell}|^2|\nabla^{m-j}\hat u_0^t(k)|^2d^*k\Big)^\frac12.\]
For $j\ge1$, we compute
\[|\nabla_k^je^{-it\kappa_{k,\lambda}^\ell}|\,\lesssim_j\,\sum_{l=1}^jt^{\frac jl}|\nabla_k^l\kappa_{k,\lambda}^\ell|^\frac jl\lesssim_j\,\langle t\rangle^{j}\sum_{l=1}^j|\nabla_k^l\kappa_{k,\lambda}^\ell|^\frac jl,\]
and the claim~\eqref{eq:cruc-ball-3} follows for all $t\ge1$.
Likewise, this argument for $m$ replaced by $m+1$ and combined with the a priori estimate of Lemma~\ref{prop:moments} in the form $M_m^t(u_0)\lesssim_{m,u^\circ}1$ yields~\eqref{eq:cruc-ball-2}.
\end{proof}

\subsection{Proof of Theorem~\ref{th:main-quasi}}\label{chap:proof-main-quasi}
In order to apply Proposition~\ref{prop:taylorbloch} with finite parameters $\ell,R,K\ge1$, we first need to proceed to two truncation procedures:
\begin{enumerate}[$\bullet$]
\item cut frequencies higher than $K$ in the potential $V$;
\item project the initial data $u^\circ$ onto the restricted non-resonant set $\R^d\setminus\Rc_R$ in Fourier space.
\end{enumerate}
We start with the frequency cut-off: for all $K\ge1$ we define $\tilde V_K:=\F^{-1}(\F \tilde V\,\mathds1_{|\cdot|\le K})$,
and we consider the corresponding Schrödinger flow,
\[i\partial_t u_{K,\lambda}=(-\triangle+\lambda V_K) u_{K,\lambda},\qquad  u_{K,\lambda}|_{t=0}= u^\circ.\]
The Gevrey regularity assumption on $V$ implies $\|V_K-V\|_{\Ld^\infty} \le\|\F(\tilde V_K-\tilde V)\|_{\Ld^1}\le e^{-K^\alpha}$, so that Duhamel's formula and unitarity yield for all $t\ge 0$,
\begin{equation}\label{e.frq-cut-off}
\|u^t_\lambda -u_{K,\lambda}^t\|_{\Ld^2} \,\lesssim\, \lambda t e^{-K^\alpha} \|u^\circ\|_{\Ld^2}.
\end{equation}
We turn to the projection of the initial data: for $R\ge1$ we define
 \[u^\circ_{R}\,:=\,\F^{-1}\big[\mathds1_{B_{R^{1/d}}}\,\mathds1_{\R^d\setminus {\Rc_R}}\,\hat u^\circ\big],\]
and we consider the corresponding Schrödinger flow,
\begin{align}\label{eq:def-uKRlam}
i\partial_t u_{K,R,\lambda}=(-\triangle+\lambda V_K) u_{K,R,\lambda},\qquad  u_{K,R,\lambda}|_{t=0}= u_{R}^\circ.
\end{align}
By unitarity and by Lemma~\ref{lem:dioph}(ii), we find for all $t\ge0$,
\begin{eqnarray}\label{eq:approx-10-qp}
\|u_{K,\lambda}^t- u_{K,R,\lambda}^t\|_{\Ld^2}\,=\,\|u^\circ-u_{R}^\circ\|_{\Ld^2}&\le&\|\mathds1_{\R^d\setminus B_{R^{1/d}}}\hat u^\circ\|_{\Ld^2}+\|\mathds1_{B_{R^{1/d}}\cap\Rc_R}\,\hat u^\circ\|_{\Ld^2}\nonumber\\
&\le&R^{-\frac 1d}\|\langle\cdot\rangle\hat u^\circ\|_{\Ld^2}+\big|B_{R^{1/d}}\cap\Rc_R\big|^\frac12\|\hat u^\circ\|_{\Ld^\infty}\nonumber\\
&\lesssim&R^{-\frac1{2d}}\big(\|\langle\cdot\rangle\hat u^\circ\|_{\Ld^2}+\|\hat u^\circ\|_{\Ld^\infty}\big).
\end{eqnarray}
Combining this with~\eqref{e.frq-cut-off}, we deduce
\begin{align}\label{eq:approx-1-qp}
\|u^t_\lambda -u_{K,R,\lambda}^t\|_{\Ld^2} \,\lesssim\, \big(R^{-\frac1{2d}}+\lambda t e^{-K^\alpha}\big)\big(\|\langle\cdot\rangle\hat u^\circ\|_{\Ld^2}+\|\hat u^\circ\|_{\Ld^\infty}\big).
\end{align}

We may now apply Proposition~\ref{prop:taylorbloch} to the truncated Schrödinger flow $u_{K,R,\lambda}$.
Denote by $\kappa^{\ell}_{K,k,\lambda}$ the Bloch eigenvalues associated with $V$ replaced by $V_K$ (and by $\nu_{K,k}^n$ the corresponding Rayleigh-Schrödinger coefficients), and define the corresponding truncated approximate flow
\begin{align}\label{eq:def-approx-flow-UKR}
U^{\ell;t}_{K,R,\lambda}(x)\,:=\,\int_{\R^d}e^{-it(|k|^2+\kappa_{K,k,\lambda}^{\ell})}\,e^{ik\cdot x}\,\hat u^\circ_{R}(k)\,d^*k.
\end{align}
Proposition~\ref{prop:taylorbloch} together with~\eqref{eq:approx-1-qp} and the assumption $\|\F\tilde V\|_{\Ld^\infty}\le1$ then yields for all $\lambda\le\frac12(CRK^{s_0+M}\ell^{s_0})^{-1}$ and $T\ge0$,
\begin{multline}\label{eq:approx-apply-1}
\sup_\Omega\sup_{0\le\lambda'\le\lambda}\, \sup_{0\le t\le T}\big\|u_{\lambda'}^t-U_{K,R,\lambda'}^{\ell;t}\big\|_{\Ld^2}
\,\lesssim\,\big(\|\langle\cdot\rangle\hat u^\circ\|_{\Ld^2}+\|\hat u^\circ\|_{\Ld^\infty}\big)\\
\times\big(R^{-\frac1{2d}}+\lambda Te^{-K^\alpha}+\lambda R K^{s_0+2M+1}\ell^{s_0+M+1}+\lambda^{\ell+1}T(K\ell)^{M+1}(CRK^{s_0+M}\ell^{s_0})^\ell\big).
\end{multline}

Finally, we show how to replace $U_{K,R,\lambda}^\ell$ by the approximate flow $U_\lambda^\ell$ without cut-off as defined in~\eqref{eq:def-Uell-appr}.
Since $\hat u_R^\circ$ is supported in $\R^d \setminus \overline{\Rc_R}\subset \mathcal O$, we can write
\[U^{\ell;t}_{K,R,\lambda}(x)\,=\,\int_{\R^d}e^{-it(|k|^2+\mathds{1}_{\Oc}(k)\kappa_{K,k,\lambda}^\ell)}\,e^{ik\cdot x}\,\hat u^\circ_{R}(k)\,d^*k,\]
so that comparing with~\eqref{eq:def-Uell-appr} yields
\begin{align*}
\|U^{\ell;t}_\lambda-U^{\ell;t}_{K,R,\lambda}\|_{\Ld^2}\,\lesssim\,\|u^\circ-u_R^\circ\|_{\Ld^2}+t\,\Big(\int_{\R^d\setminus\overline{\Rc_R}}|\kappa_{K,k,\lambda}^\ell-\kappa_{k,\lambda}^\ell|^2|\hat u^\circ(k)|^2\,d^*k\Big)^\frac12.
\end{align*}
It remains to estimate the second RHS term. Since $\|\F(\tilde V-\tilde V_K)\|_{\Ld^\infty}\le \|\F(\tilde V-\tilde V_K)\|_{\Ld^1}\le e^{-K^\alpha}$, the argument in Step~1 of the proof of Proposition~\ref{prop:cor-QP} easily yields for all $n\ge1$ and $k\in\R^d\setminus\Rc_R^{Kn}$,
\[|\nu_{K,k}^n-\nu_{2K,k}^n|\,\lesssim\, e^{-K^\alpha}(CRK^{s_0+M}n^{s_0})^n.\]
Hence, for all $n\ge1$ and $k\in\R^d\setminus\Rc_R=\R^d \setminus \big( \cup_m  \Rc_R^m\big)$, provided that $K\ge C( n \log n)^\frac1\alpha$ for some large enough constant $C$  (depending only on $s_0,M,\alpha$), we obtain by a dyadic decomposition
\begin{eqnarray*}
|\nu_{k}^n-\nu_{K,k}^n|\,\le\,\sum_{r=0}^\infty e^{-(2^rK)^\alpha}\big(CR(2^rK)^{s_0+M}n^{s_0}\big)^n\,\le\,e^{-K^\alpha}(CRK^{s_0+M}n^{s_0})^n,
\end{eqnarray*}
which implies for all $\lambda\le\frac12(CRK^{s_0+M}\ell^{s_0})^{-1}$, $k\in\R^d\setminus\Rc_R$, and $K \ge C( \ell \log \ell)^\frac1\alpha$,
\[|\kappa_{k,\lambda}^\ell-\kappa_{K,k,\lambda}^\ell|\,\le\,\lambda \sum_{n=1}^\ell \lambda^n |\nu_{k}^n-\nu_{K,k}^n|\,\le\,\lambda e^{-K^\alpha}.\]
Injecting this together with~\eqref{eq:approx-10-qp} into the above, we obtain
\begin{align*}
\|U^{\ell;t}_\lambda-U^{\ell;t}_{K,R,\lambda}\|_{\Ld^2}\,\lesssim\,\big(R^{-\frac1{2d}}+\lambda t e^{-K^\alpha}\big)\big(\|\langle\cdot\rangle\hat u^\circ\|_{\Ld^2}+\|\hat u^\circ\|_{\Ld^\infty}\big),
\end{align*}
and~\eqref{eq:approx-apply-1} turns into
\begin{multline}\label{eq:pre-optimiz-approxu}
\sup_\Omega\sup_{0\le\lambda'\le\lambda}\, \sup_{0\le t\le T}\big\|u_{\lambda'}^t-U_{\lambda'}^{\ell;t}\big\|_{\Ld^2}
\,\lesssim\,\big(\|\langle\cdot\rangle\hat u^\circ\|_{\Ld^2}+\|\hat u^\circ\|_{\Ld^\infty}\big)\\
\times\Big(R^{-\frac1{2d}}+\lambda Te^{-K^\alpha}+\lambda R K^{s_0+2M+1}\ell^{s_0+M+1}+\lambda^{\ell+1}T(CRK^{s_0+2M+1}\ell^{s_0+M+1})^\ell\Big).
\end{multline}
For $\gamma<\frac1{2d+1}$, choosing
\begin{eqnarray*}
R\,=\,R(\lambda)&:=&\lambda^{-2d\gamma},\\
K\,=\,K(\lambda)&:=&C(\ell(\lambda)\log \ell(\lambda))^\frac1\alpha,\\
T\,=\,T(\lambda)&:=&e^{\ell(\lambda)},\\
\ell\,=\,\ell(\lambda)&:=&\lambda^{-\frac{\alpha(1-(2d+1)\gamma)}{s_0+2M+1+\alpha(s_0+M+1)}},
\end{eqnarray*}
the conclusion follows after straightforward simplifications.\qed

\subsection{Proof of Corollary~\ref{th:main-quasi-tsp}.}
Let $m\ge0$ be fixed. In this proof, we use the notation $\lesssim_{m,u^\circ}$ for $\le$ up to a multiplicative constant that only depends on $d$, $m$, $\|\langle\nabla\rangle^{m+1}u^\circ\|_{\Ld^2}$, $\|\langle\cdot\rangle^{m+1}u^\circ\|_{\Ld^2}$, and $\|\langle\cdot\rangle^m\hat u^\circ\|_{\Ld^\infty}$.
Again, we may not directly apply Proposition~\ref{prop:ball-tsp-weak} since the approximate Bloch eigenvalues $\kappa_{k,\lambda}^\ell$ can only be estimated for $k$ away from the resonant set. Therefore, we first need to proceed to similar truncations of the initial data $u^\circ$ as in the proof of Theorem~\ref{th:main-quasi}. Additional care is needed here since all the truncations need to be  smooth.
We start with the construction of suitable smooth truncations.
Given $R\ge1$, for each $\xi\in\Z^M\setminus\{0\}$, recalling the definition~\eqref{eq:redef-RRxi} of $\Rc_R(\xi)$ in the proof of Lemma~\ref{lem:dioph}, we choose a cut-off function $\chi_R^\xi$ with $\chi_R^\xi=0$ in $\Rc_{2R}(\xi)$, $\chi_R^\xi=1$ outside $\Rc_{R}(\xi)$, and $\|\nabla^j\chi_R^\xi\|_{\Ld^\infty}\lesssim_j (R|\xi|^{s_0+1})^j$ for all $0\le j\le m$. We also choose a cut-off function $\chi_R$ with $\chi_R=1$ in $B_{R^{1/d}}$, $\chi_R=0$ outside $B_{2R^{1/d}}$, and $\|\nabla^j\chi_R\|_{\Ld^\infty}\lesssim_j1$. We then define the product cut-off function
\begin{align}\label{eq:def-zeta-cut}
\zeta_R\,:=\,\chi_R\prod_{\xi\in\Z^M\setminus\{0\}\atop|\xi|\le K\ell}\chi_R^\xi.
\end{align}
By construction, for all $0\le j\le m$, we have
\begin{align}\label{eq:bound-zetaR}
\|\nabla^j\zeta_R\|_{\Ld^\infty}\,\lesssim\,(RK^{s_0+M+1}\ell^{s_0+M+1})^j,
\end{align}
and
\[\mathds1_{B_{R^{1/d}}}\mathds1_{\R^d\setminus{\Rc_{R}^{K\ell}}}\,\le\,\zeta_R\,\le\,\mathds1_{B_{2R^{1/d}}}\mathds1_{\R^d\setminus{\Rc_{2R}^{K\ell}}}.\]
We now define the truncated initial data $u_R^\circ:=\F^{-1}[\zeta_R\,\hat u^\circ]$, the truncated Schrödinger flow $u_{K,R,\lambda}$ as the solution of the corresponding equation~\eqref{eq:def-uKRlam}, and we let $u_{R,0}$ denote the corresponding truncation of the free flow $u_0$.
We further define the truncated approximate flow $U_{K,R,\lambda}^\ell$ as in~\eqref{eq:def-approx-flow-UKR}.

With these definitions at hand, we now turn to the proof of Corollary~\ref{th:main-quasi-tsp}. This is about repeating the proof of Proposition~\ref{prop:ball-tsp-weak} with suitable truncation arguments.
The starting point is the triangle inequality in the form
\begin{multline*} 
\big|M_m^t(u_\lambda)- M_m^t(u_0)\big|\,\le\,\big|M_m^t(u_\lambda)-\tilde M_m^t(u_\lambda;C_0)\big|+\big|\tilde M_m^t(u_\lambda;C_0)-\tilde M_m^t(u_{K,R,\lambda};C_0)\big|\\
+\big|\tilde M_m^t(u_{K,R,\lambda};C_0)-\tilde M_m^t(U_{K,R,\lambda}^\ell;C_0)\big|
+\big|M_m^t(U_{K,R,\lambda}^\ell)-\tilde M_m^t(U_{K,R,\lambda}^\ell;C_0)\big|\\
+\big|M_m^t(U_{K,R,\lambda}^\ell)- M_m^t(u_{R,0})\big|+\big|M_m^t(u_0)- M_m^t(u_{R,0})\big|.
\end{multline*}
Using Corollary~\ref{cor:moment-trunc} to estimate the first RHS term and using~\eqref{eq:cruc-ball-1} to estimate the fourth one, this takes the form
\begin{multline}\label{eq:decomp-Mmt-ball}
\big|M_m^t(u_\lambda)- M_m^t(u_0)\big|\,\lesssim_{m,u^\circ}\,C_0^{-1}+C_0^m\|u_\lambda^t-u^t_{K,R,\lambda}\|_{\Ld^2}+C_0^m\|u_{K,R,\lambda}^t-U_{K,R,\lambda}^{\ell;t}\|_{\Ld^2}\\
+C_0^{-1}M_{m+1}^t(U_{K,R,\lambda}^\ell)+\big|M_m^t(U_{K,R,\lambda}^\ell)- M_m^t(u_{R,0})\big|
+\big|M_m^t(u_0)- M_m^t(u_{R,0})\big|.
\end{multline}
We separately estimate the last five RHS terms and we start with the first one, which is a truncation error.
Arguing as for~\eqref{eq:approx-1-qp} (now with smooth truncations), we find
\begin{align*}
\|u^t_\lambda -u_{K,R,\lambda}^t\|_{\Ld^2} \,\lesssim_{u^\circ}\, R^{-\frac1{2d}}+\lambda t e^{-K^\alpha}.
\end{align*}
Applying Proposition~\ref{prop:taylorbloch} to estimate the third RHS term in~\eqref{eq:decomp-Mmt-ball}, we obtain for all $\lambda\le\frac12(CRK^{s_0+M}\ell^{s_0})^{-1}$,
\begin{align*}
\|u_{K,R,\lambda}^t-U_{K,R,\lambda}^{\ell;t}\|_{\Ld^2}
\,\lesssim_{u^\circ}\,\lambda CR K^{s_0+2M+1}\ell^{s_0+M+1}
+\lambda^{\ell+1}t(K\ell)^{M+1}(CRK^{s_0+M}\ell^{s_0})^\ell.
\end{align*}
Using~\eqref{eq:cruc-ball-2} to estimate the fourth RHS term in~\eqref{eq:decomp-Mmt-ball} yields
\begin{align*}
M_{m+1}^t(U_{K,R,\lambda}^\ell)\,\lesssim_{m,u^\circ}\,1+\sum_{j=1}^{m+1}\sum_{l=1}^j\Big(\int_{\R^d}|\nabla_k^l\kappa_{K,k,\lambda}^\ell|^\frac{2j}l|(\tfrac1t\nabla)^{m+1-j}\hat u_{R,0}^t(k)|^2d^*k\Big)^\frac12,
\end{align*}
and hence, by the definition~\eqref{e.def:TBW} of $\kappa_{K,k,\lambda}^\ell$ and the bounds of Proposition~\ref{prop:cor-QP}, we deduce for all $\lambda\le\frac12(CRK^{s_0+M}\ell^{s_0})^{-1}$,
\begin{align*}
M_{m+1}^t(U_{K,R,\lambda}^\ell)\,\lesssim_{m,u^\circ}\,1+\lambda(RK^{s_0+1}\ell^{s_0+1})^{m+1}\|\langle\tfrac1t\nabla\rangle^{m+1}\hat u_{R,0}^t\|_{\Ld^2}.
\end{align*}
Since Lemma~\ref{prop:moments} and~\eqref{eq:bound-zetaR} yield
\begin{multline*}
\|\langle\tfrac1t\nabla\rangle^{m+1}\hat u_{R,0}^t\|_{\Ld^2}\,\lesssim_m\,\|\langle\nabla\rangle^{m+1}(\zeta_R\hat u^\circ)\|_{\Ld^2}+\|\langle\cdot\rangle^{m+1}\zeta_R\hat u^\circ\|_{\Ld^2}\\
\,\lesssim_{m,u^\circ}\,(RK^{s_0+M+1}\ell^{s_0+M+1})^{m+1},
\end{multline*}
the above turns into
\begin{align*}
M_{m+1}^t(U_{K,R,\lambda}^\ell)\,\lesssim_{m,u^\circ}\,1+\lambda(R^2K^{2(s_0+1)+M}\ell^{2(s_0+1)+M})^{m+1}.
\end{align*}
Likewise, combining~\eqref{eq:cruc-ball-3} with the bounds of Proposition~\ref{prop:cor-QP}, we may estimate the fifth RHS term in~\eqref{eq:decomp-Mmt-ball} as follows: for all $\lambda\le\frac12(CRK^{s_0+M}\ell^{s_0})^{-1}$,
\begin{multline*}
\big|M_m^t(U_{K,R,\lambda}^\ell)- M_m^t(u_{R,0})\big|\,\lesssim\,\sum_{j=1}^m\sum_{l=1}^j\Big(\int_{\R^d}|\nabla_k^l\kappa_{k,\lambda}^\ell|^\frac{2j}l|(\tfrac1t\nabla)^{m-j}\hat u_{R,0}^t(k)|^2d^*k\Big)^\frac12\\
\,\lesssim_m\,\lambda(RK^{s_0+1}\ell^{s_0+1})^{m}\|\langle\tfrac1t\nabla\rangle^{m}\hat u_{R,0}^t\|_{\Ld^2}\,\lesssim_{m,u^\circ}\,\lambda(R^2K^{2(s_0+1)+M}\ell^{2(s_0+1)+M})^{m}.
\end{multline*}
It remains to estimate the last RHS term in~\eqref{eq:decomp-Mmt-ball}, which is a truncation error.
We write
\begin{multline*}
\big|\nabla^m(\hat u_{0}^t-\hat u_{R,0}^t)(k)\big|\,=\,\big|\nabla_k^m\big(e^{-it|k|^2}(1-\zeta_R(k))\,\hat u^\circ(k)\big)\big|\\
\,\lesssim\,\sum_{j=0}^m(t^{j}|k|^j+t^{\frac j2})\big|\nabla^{m-j}\big((1-\zeta_R(k))\hat u^\circ(k)\big)\big|,
\end{multline*}
so that for $t\ge1$,
\begin{align*}
t^{-m}\|\nabla^m(\hat u_{0}^t-\hat u_{R,0}^t)\|_{\Ld^2}\,\lesssim_{m,u^\circ}\,\|\langle\cdot\rangle^m(1-\zeta_R)\,\hat u^\circ\|_{\Ld^2}+\sum_{j=0}^{m-1}t^{j-m}\|\zeta_R\|_{W^{m-j,\infty}}.
\end{align*}
Applying~\eqref{eq:approx-10-qp} in the form $\|\langle\cdot\rangle^m(1-\zeta_R)\,\hat u^\circ\|_{\Ld^2}\,\lesssim_{m,u^\circ}\,R^{-\frac1{2d}}$, and using~\eqref{eq:bound-zetaR}, we deduce for all $t\ge RK^{s_0+M+1}\ell^{s_0+M+1}$,
\begin{multline*}
t^{-m}\|\nabla^m(\hat u_{0}^t-\hat u_{R,0}^t)\|_{\Ld^2}\,\lesssim_{m,u^\circ}\,R^{-\frac1{2d}}+\sum_{j=0}^{m-1}(t^{-1}RK^{s_0+M+1}\ell^{s_0+M+1})^{m-j}\\
\,\lesssim_m\,R^{-\frac1{2d}}+t^{-1}RK^{s_0+M+1}\ell^{s_0+M+1}.
\end{multline*}
Injecting all the above estimates into~\eqref{eq:decomp-Mmt-ball} and optimizing wrt $C_0\ge1$, we find for all $\lambda\le\frac12(CRK^{s_0+M}\ell^{s_0})^{-1}$,
\begin{multline*}
\big|M_m^t(u_\lambda)- M_m^t(u_0)\big|\\
\,\lesssim_{m,u^\circ}\big(R^{-\frac1{2d}}+\lambda t e^{-K^\alpha}+\lambda CR K^{s_0+2M+1}\ell^{s_0+M+1}+\lambda^{\ell+1}t(CRK^{s_0+2M+1}\ell^{s_0+M+1})^\ell\big)^\frac1{m+1}\\
+\lambda(R^2K^{2(s_0+1)+M}\ell^{2(s_0+1)+M})^{m+1}+t^{-1}RK^{s_0+M+1}\ell^{s_0+M+1}.
\end{multline*}
For $\gamma<\frac1{4d(m+1)+1}$, choosing
\begin{eqnarray*}
R\,=\,R(\lambda)&:=&\lambda^{-2d\gamma},\\
K\,=\,K(\lambda)&:=&\ell(\lambda)^\frac1\alpha,\\
\ell\,=\,\ell(\lambda)&:=&\lambda^{-\frac{\alpha(1-(4d(m+1)+1)\gamma)}{(m+1)(\alpha+1)(2(s_0+1)+M)}},
\end{eqnarray*}
the conclusion follows in the regime $\lambda^{-\frac1{m+1}}\le t\le e^{\ell(\lambda)}$ after straightforward simplifications. For shorter timescales, the conclusion is easier.
\qed

\subsection{Proof of Corollary~\ref{th:main-quasi-tsp-2}}\label{sec:pr-cor2}

The proof is split into two steps. First, we reformulate the argument for Propositions~\ref{prop:taylorbloch} and~\ref{prop:ball-tsp-weak}, now at high frequencies rather than small disorder. Second, we post-process the result to treat general initial data, based on the orthogonality between the well-described non-resonant part and the resonant remainder. The latter step requires some new technical care.

\medskip
\step1 Effective approximation result for the flow.

Let $u^\circ_\lambda\in\Sc(\R^d)$ with $\hat u^\circ_\lambda$ supported in $\R^d\setminus B_{\lambda^{-1}}$.
The idea is to consider fattened resonant sets $\Rc_{\lambda R}$ with fattening parameter $R^{-1}\lambda^{-1}$ chosen as a small multiple $R^{-1}$ of the large frequency $\lambda^{-1}$.
We first argue under the simplifying assumption that the potential $V$ satisfies $K:=\sup\{1\vee|\xi|:\xi\in\supp\F\tilde V\}<\infty$.
Proposition~\ref{prop:cor-QP} then yields for all $n\ge1$, $k\in\R^d\setminus\Rc_{\lambda R}^{Kn}$, and $s,j\ge0$,
\begin{eqnarray}\label{eq:bound-nu-freq}
|\nabla_k^j\nu_{k}^n|&\le&(\lambda RCjK^{s_0+1}n^{s_0+1})^j(\lambda RCK^{s_0+M}n^{s_0})^{n},\\
\|\nabla_k^j \phi_k^n\|_{H^s(\Omega)}&\le& (CKn)^{s}(\lambda RCjK^{s_0+1}n^{s_0+1})^j(\lambda RCK^{s_0+M}n^{s_0})^{n},\nonumber
\end{eqnarray}
and we set $\kappa_k^\ell:=\sum_{n=0}^\ell\nu_k^n$. (Here, we allow constants $C$'s to further depend on $\|\F \tilde V\|_{\Ld^\infty}$.)
The setting is similar to the small disorder regime, as the inverse frequency $\lambda\ll1$ now plays essentially the same role as the small disorder intensity.
We may thus repeat the proof of Proposition~\ref{prop:taylorbloch}: if $u_\lambda^\circ$ is supported in $\R^d\setminus(B_{\lambda^{-1}}\cup\Rc_{\lambda R}^{K\ell})$ and if $V$ satisfies $K:=\sup\{1\vee|\xi|:\xi\in\supp\F\tilde V\}<\infty$, denoting by $u_\lambda$ the Schrödinger flow
\[i\partial_tu_\lambda=(-\triangle+V)u_\lambda,\qquad u_\lambda|_{t=0}=u_\lambda^\circ,\]
and considering the approximate flow
\[U_\lambda^{\ell;t}(x):=\int_{\R^d}e^{-it(|k|^2+\kappa_k^\ell)}e^{ik\cdot x}\hat u^\circ_\lambda(k)\,d^*k,\]
there holds for all $\lambda\le\frac12\big(CRK^{s_0+M}\ell^{s_0}\big)^{-1}$ and $T\ge0$,
\begin{multline}\label{eq:est-base-freqhigh}
\sup_\Omega\sup_{0\le\lambda'\le\lambda}\sup_{0\le t\le T}\|u_{\lambda'}^t-U_{\lambda'}^{\ell;t}\|_{\Ld^2}\,\le\,\lambda CRK^{s_0+2M+1}\ell^{s_0+M+1}\|\F\tilde V\|_{\Ld^\infty}\|u^\circ_\lambda\|_{\Ld^2}\\
+\lambda^{\ell}T(K\ell)^{M+1}(CRK^{s_0+M}\ell^{s_0})^\ell\|\F\tilde V\|_{\Ld^\infty}^{\ell+1}\|u^\circ_\lambda\|_{\Ld^2}.
\end{multline}
Next, as in the proof of Theorem~\ref{th:main-quasi}, we can proceed to a truncation argument to remove the compact support assumption for $\tilde V$, and the same result~\eqref{eq:est-base-freqhigh} then holds for all $K\ge C(\ell\log\ell)^\frac1\alpha$ with an additional error $Te^{-K^\alpha}\|u^\circ_\lambda\|_{\Ld^2}$.

\begin{rem}
As in the proof of Theorem~\ref{th:main-quasi}, we may further try to remove the assumption that $\hat u_\lambda^\circ$ is supported in the non-resonant set. Assuming that $\hat u^\circ_\lambda$ is supported in the high frequency annulus $B_{2\lambda^{-1}}\setminus B_{\lambda^{-1}}$, we define for $R\ge1$,
\[u_{\lambda,R}^\circ:=\F^{-1}\big[\mathds1_{\R^d\setminus{\Rc_{\lambda R}}}\hat u_\lambda^\circ\big],\]
and the truncation error is then estimated as follows, in view of Lemma~\ref{lem:dioph}(ii),
\[\|u^\circ_\lambda-u^\circ_{\lambda,R}\|_{\Ld^2}\,\le\,\big|B_{2\lambda^{-1}}\cap\Rc_{\lambda R}\big|^\frac12\|\hat u^\circ_\lambda\|_{\Ld^\infty}\,\lesssim\, \lambda^{-\frac d2}R^{-\frac12}\|\hat u^\circ_\lambda\|_{\Ld^\infty}.\]
If we wish to conclude as in Theorem~\ref{th:main-quasi}, we then need to assume that $u_\lambda^\circ$ satisfies $\|u_\lambda^\circ\|_{\Ld^2}\le1$ and $\lambda^{-\frac d2}\|\hat u^\circ_\lambda\|_{\Ld^\infty}\le1$, which is for instance satisfied for rescaled initial data of the form $u_\lambda^\circ:=\lambda^{-\frac d2}u^\circ(\lambda^{-1}\cdot)$.
In that setting, we may directly adapt the proof of Corollary~\ref{th:main-quasi-tsp}.
\end{rem}

\medskip
\step2 Conclusion.

Let $u^\circ_\lambda\in\Sc(\R^d)$ with $\hat u^\circ_\lambda$ supported in $\R^d\setminus B_{\lambda^{-1}}$.
Decompose $u_\lambda^\circ:=u_{\lambda,1}^\circ+u_{\lambda,2}^\circ$ in terms of the non-resonant part $\hat u_{\lambda,1}^\circ:=\hat u_{\lambda}^\circ\mathds1_{\R^d\setminus\Rc_{\lambda R}^{K\ell}}$ and the resonant remainder $\hat u_{\lambda,2}^\circ:=\hat u_{\lambda}^\circ\mathds1_{\Rc_{\lambda R}^{K\ell}}$.
More precisely, as $\hat u_\lambda^\circ$ may concentrate on the boundary of $\Rc_{\lambda R}^{K\ell}$, we appeal to an averaging method à la De Giorgi~\cite{DeGiorgi-75}, which consists in averaging estimates for a sequence of increasing neighborhoods of the resonant set. Given $N\ge1$, for all $1\le p\le N$, consider
\[\Rc_{\lambda R,N}^p:=\Rc_{\lambda R(1+\frac{p-1}N)}^{K\ell},\]
decompose $u_\lambda^\circ:=u_{\lambda,1}^{p;\circ}+u_{\lambda,2}^{p;\circ}$ in terms of
\[\hat u_{\lambda,1}^{p;\circ}:=\hat u_{\lambda}^\circ\mathds1_{\R^d\setminus \Rc_{\lambda R,N}^p},\qquad
\hat u_{\lambda,2}^{p;\circ}:=\hat u_{\lambda}^\circ\mathds1_{\Rc_{\lambda R,N}^p},\]
and consider the corresponding evolutions $u_\lambda=u_{\lambda,1}^p+u_{\lambda,2}^p$. In view of~\eqref{eq:est-base-freqhigh} in Step~1, we know that $u_{\lambda,1}^p$ remains close to the approximate flow
\[U_{\lambda,1}^{\ell,p;t}(x):=\int_{\R^d\setminus\Rc_{\lambda R,N}^p}e^{-it(|k|^2+\kappa_k^\ell)}e^{ik\cdot x}\hat u_{\lambda}^\circ(k)\,d^*k.\]
As calculating moments requires smoothness in Fourier space, we must replace the integral over the non-resonant set by a smooth cut-off. 
For each $1\le p\le N$, similarly as in~\eqref{eq:def-zeta-cut}, we can construct a cut-off function $\zeta_{\lambda R,N}^p$ such that
\begin{eqnarray*}
&&\zeta_{\lambda R,N}^p=0\qquad\text{in~$\Rc_{\lambda R,N}^{p+1}$,}\\
&&\zeta_{\lambda R,N}^p=1\qquad\text{outside~$\Rc_{\lambda R,N}^p$,}
\end{eqnarray*}
and $\|\nabla^j\zeta_{\lambda R,N}^p\|_{\Ld^\infty}\lesssim_j (N\lambda RK^{s_0+M+1}\ell^{s_0+M+1})^j$,
and we define
\[\tilde U_{\lambda,1}^{\ell,p;t}(x):=\int_{\R^d}e^{-it(|k|^2+\kappa_k^\ell)}e^{ik\cdot x}\hat u_{\lambda}^\circ(k)\,\zeta_{\lambda R,N}^p(k)\,d^*k.\]
With the notation~\eqref{eq:trunc-mom} for ballistically truncated moments, we then decompose
\begin{multline}\label{eq:lower-bound-Mm}
M_1^t(u_\lambda)\ge\tilde M_1^t(u_\lambda)\ge\tilde M_1^t(\tilde U_{\lambda,1}^{\ell,p}+u_{\lambda,2}^p)\\
-CC_0\|u_{\lambda,1}^{p;t}-U_{\lambda,1}^{\ell,p;t}\|_{\Ld^2}-CC_0\|U_{\lambda,1}^{\ell,p;t}-\tilde U_{\lambda,1}^{\ell,p;t}\|_{\Ld^2}.
\end{multline}
We turn to the first RHS term.
Expanding the square
\[\tilde M_1^t(\tilde U_{\lambda,1}^{\ell,p}+u_{\lambda,2}^p)^2\,\ge\,\tilde M_1^t(\tilde U_{\lambda,1}^{\ell,p})^2+2\Re\int_{\R^d}\big(\tfrac{|x|}t\big)^2e^{-(\frac{|x|}{C_0t})^2}\,\tilde U_{\lambda,1}^{\ell,p;t}(x)\,\overline{u_{\lambda,2}^t}(x)dx,\]
decomposing
\begin{multline}\label{eq:decomp-triangle-four}
\big(\tfrac{|x|}{t}\big)^2\tilde U_{\lambda,1}^{\ell,p;t}(x)\,=\,-t^{-2}\int_{\R^d}e^{ik\cdot x}\triangle\Big(e^{-it(|k|^2+\kappa_k^\ell)}\hat u_{\lambda}^\circ(k)\,\zeta_{\lambda R,N}^p(k)\Big)\,d^*k\\
\,=\,\tilde V_{\lambda}^{\ell,p;t}(x)+\tilde W_{\lambda}^{\ell,p;t}(x),
\end{multline}
in terms of
\begin{eqnarray*}
\tilde V_{\lambda}^{\ell,p;t}(x)&:=&\int_{\R^d}e^{-it(|k|^2+\kappa_k^\ell)}e^{ik\cdot x}|2k+\nabla\kappa_k^\ell|^2\,\hat u_{\lambda}^\circ(k)\,\zeta_{\lambda R,N}^p(k)\,d^*k,\\
\tilde W_{\lambda}^{\ell,p;t}(x)&:=&it^{-1}\int_{\R^d}e^{-it(|k|^2+\kappa_k^\ell)}e^{ik\cdot x}(2d+\triangle\kappa_k^\ell)\,\hat u_{\lambda}^\circ(k)\,\zeta_{\lambda R,N}^p(k)\,d^*k\\
&&+it^{-1}\int_{\R^d} e^{-it(|k|^2+\kappa_k^\ell)}e^{ik\cdot x}(2k+\nabla\kappa_k^\ell)\cdot\nabla(\hat u_{\lambda}^\circ\zeta_{\lambda R,N}^p)(k)\,d^*k\\
&&-t^{-2}\int_{\R^d}e^{-it(|k|^2+\kappa_k^\ell)}e^{ik\cdot x}\triangle(\hat u_{\lambda}^\circ\zeta_{\lambda R,N}^p)(k)\,d^*k,
\end{eqnarray*}
and also defining
\[V_{\lambda}^{\ell,p;t}(x)\,:=\,\int_{\R^d\setminus\Rc_{\lambda R,N}^p}e^{-it(|k|^2+\kappa_k^\ell)}e^{ik\cdot x}|2k+\nabla\kappa_k^\ell|^2 \hat u_{\lambda}^\circ(k)\,d^*k,\]
we deduce using unitarity in form of $\|u_{\lambda,2}^{p;t}\|_{\Ld^2} = \| u_{\lambda,2}^{p;\circ}\|_{\Ld^2}\le \|u_{\lambda}^\circ\|_{\Ld^2}$
\begin{multline*}
\tilde M_1^t(\tilde U_{\lambda,1}^{\ell,p}+u_{\lambda,2}^p)^2\ge\tilde M_1^t(\tilde U_{\lambda,1}^{\ell,p})^2+2\Re\int_{\R^d}e^{-(\frac{|x|}{C_0t})^2}\,V_{\lambda}^{\ell,p;t}(x)\,\overline{u_{\lambda,2}^{p;t}}(x)dx\\
-C\Big(\|\tilde W_{\lambda}^{\ell,p;t}\|_{\Ld^2}+\|V_{\lambda}^{\ell,p;t}-\tilde V_\lambda^{\ell,p;t}\|_{\Ld^2}\Big)\|u_{\lambda}^\circ\|_{\Ld^2}.
\end{multline*}
Since $V_\lambda^{\ell;t}$ and $u_{\lambda,2}^t$ have disjoint supports in the Floquet-Bloch fibration (cf.~\eqref{eq:flow-fibration}),
we have $\int_{\R^d}V_{\lambda}^{\ell;t}\,\overline{u_{\lambda,2}^t}=0$, and
\begin{multline*}
\Big|\int_{\R^d}e^{-(\frac{|x|}{C_0t})^2}V_{\lambda}^{\ell,p;t}(x)\,\overline{u_{\lambda,2}^{p;t}}(x)dx\Big|\,=\,\Big|\int_{\R^d}\big(1-e^{-(\frac{|x|}{C_0t})^2}\big)\,V_{\lambda}^{\ell,p;t}(x)\,\overline{u_{\lambda,2}^{p;t}}(x)dx\Big|\\
\,\lesssim\,C_0^{-1}\|(\tfrac{|\cdot|}{t})\tilde V_\lambda^{\ell,p;t}\|_{\Ld^2}\|u_\lambda^\circ\|_{\Ld^2}+\|V_\lambda^{\ell,p;t}-\tilde V_\lambda^{\ell,p;t}\|_{\Ld^2}\|u_\lambda^\circ\|_{\Ld^2}.
\end{multline*}
Injecting the above into~\eqref{eq:lower-bound-Mm}, we are led to
\begin{multline}\label{eq:main-decomp}
M_1^t(u_\lambda)^2\ge \tfrac12\tilde M_1^t(\tilde U_{\lambda,1}^{\ell,p})^2-CC_0^2\|u_{\lambda,1}^{p;t}-U_{\lambda,1}^{\ell,p;t}\|_{\Ld^2}^2-CC_0^2\|U_{\lambda,1}^{\ell,p;t}-\tilde U_{\lambda,1}^{\ell,p;t}\|_{\Ld^2}^2\\
-C\Big(\|\tilde W_{\lambda}^{\ell,p;t}\|_{\Ld^2}+C_0^{-1}\|(\tfrac{|\cdot|}{t})\tilde V_\lambda^{\ell,p;t}\|_{\Ld^2}+\|V_{\lambda}^{\ell,p;t}-\tilde V_\lambda^{\ell,p;t}\|_{\Ld^2}\Big)\|u_{\lambda}^\circ\|_{\Ld^2}.
\end{multline}
It remains to examine the first RHS term. As in~\eqref{eq:cruc-ball-1}, we can write
\[\tilde M_1^t(\tilde U_{\lambda,1}^{\ell,p})\,\ge\, M_1^t(\tilde U_{\lambda,1}^{\ell,p})-CC_0^{-1}\|(\tfrac{|\cdot|}t)^2\tilde U_{\lambda,1}^{\ell,p}\|_{\Ld^2},\]
hence, decomposing
\[\tfrac{x}{t}\tilde U_{\lambda,1}^{\ell,p;t}(x)\,=\,-2\int_{\R^d}e^{-it(|k|^2+\kappa_k^\ell)}e^{ik\cdot x}k\hat u_{\lambda}^\circ(k)\,\zeta_{\lambda R,N}^p(k)\,d^*k+\tilde Z_\lambda^{\ell,p;t}(x),\]
in terms of
\begin{multline*}
\tilde Z_\lambda^{\ell,p;t}(x)\,:=\,-\int_{\R^d}e^{-it(|k|^2+\kappa_k^\ell)}e^{ik\cdot x}(\nabla\kappa_k^\ell)\,\hat u_{\lambda}^\circ(k)\,\zeta_{\lambda R,N}^p(k)\,d^*k\\
-it^{-1}\int_{\R^d}e^{ik\cdot x}e^{-it(|k|^2+\kappa_k^\ell)}\nabla(\hat u_{\lambda}^\circ\zeta_{\lambda R,N}^p)(k)\,d^*k,
\end{multline*}
and noting that $\zeta_{\lambda R,N}^p=1$ outside $\Rc_{\lambda R}^{K\ell}\subset\Rc_{\lambda R}$, we find
\begin{align*}
\tilde M_1^t(\tilde U_{\lambda,1}^{\ell,p})\,\ge\, 2\Big(\int_{\R^d\setminus\Rc_{\lambda R}}|k|^2|\hat u_\lambda^\circ(k)|^2d^*k\Big)^\frac12-\|\tilde Z_\lambda^{\ell,p;t}\|_{\Ld^2}-CC_0^{-1}\|(\tfrac{|\cdot|}t)^2\tilde U_{\lambda,1}^{\ell,p;t}\|_{\Ld^2}.
\end{align*}
Further decomposing $\tilde U_{\lambda,1}^{\ell,p}$ as in~\eqref{eq:decomp-triangle-four}, and injecting this into~\eqref{eq:lower-bound-Mm}, we obtain
\begin{multline}\label{eq:main-decomp+}
M_1^t(u_\lambda)^2\ge\Big(\int_{\R^d\setminus\Rc_{\lambda R}}|k|^2|\hat u_\lambda^\circ(k)|^2d^*k\Big)-CC_0^2\|u_{\lambda,1}^{p;t}-U_{\lambda,1}^{\ell,p;t}\|_{\Ld^2}^2-\|\tilde Z_\lambda^{\ell,p;t}\|_{\Ld^2}^2\\
-CC_0^{-2}\big(\|\tilde V_{\lambda,1}^{\ell,p;t}\|_{\Ld^2}^2+\|\tilde W_{\lambda,1}^{\ell,p;t}\|_{\Ld^2}^2\big)-C\big(\|\tilde W_{\lambda}^{\ell,p;t}\|_{\Ld^2}+C_0^{-1}\|(\tfrac{|\cdot|}{t})\tilde V_\lambda^{\ell,p;t}\|_{\Ld^2}\big)\|u_{\lambda}^\circ\|_{\Ld^2}\\
-CC_0^2\|U_{\lambda,1}^{\ell,p;t}-\tilde U_{\lambda,1}^{\ell,p;t}\|_{\Ld^2}^2
-C\|V_{\lambda}^{\ell,p;t}-\tilde V_\lambda^{\ell,p;t}\|_{\Ld^2}\|u_{\lambda}^\circ\|_{\Ld^2}.
\end{multline}
It remains to analyze each of the RHS error terms. The second RHS term is estimated by~\eqref{eq:est-base-freqhigh} in Step~1:
for $\lambda\le\frac12\big(CRK^{s_0+M}\ell^{s_0}\big)^{-1}$ and $K\ge C(\ell\log\ell)^\frac1\alpha$,
\begin{multline*}
\|u_{\lambda,1}^{p;t}-U_{\lambda,1}^{\ell,p;t}\|_{\Ld^2}\\
\,\le\,\Big(\lambda RCK^{s_0+2M+1}\ell^{s_0+M+1}
+t(K\ell)^{M+1}(\lambda RCK^{s_0+M}\ell^{s_0})^\ell+te^{-K^\alpha}\Big)\|u^\circ_\lambda\|_{\Ld^2}.
\end{multline*}
The third RHS term in~\eqref{eq:main-decomp+} can be directly estimated using~\eqref{eq:bound-nu-freq} and the definition of~$\zeta_{\lambda R,N}^p$: for $\lambda\le\frac12\big(CRK^{s_0+M+1}\ell^{s_0+M+1}\big)^{-1}$,
\[\|\tilde Z_\lambda^{\ell,p;t}\|_{\Ld^2}\,\lesssim\,(\lambda RK^{s_0+M}\ell^{s_0+1})^2\|u_\lambda^\circ\|_{\Ld^2}+\tfrac{N}{t}\|\langle\cdot\rangle u_\lambda^\circ\|_{\Ld^2}.\]
Similarly, the fourth, fifth, and sixth RHS terms are estimated by
\begin{gather*}
\|\tilde V_{\lambda,1}^{\ell,p;t}\|_{\Ld^2}\,\lesssim\,\|\langle\nabla\rangle^2 u_\lambda^\circ\|_{\Ld^2},\\
\|\tilde W_{\lambda,1}^{\ell,p;t}\|_{\Ld^2}\,\lesssim\,t^{-1}\|u_\lambda^\circ\|_{\Ld^2}+\tfrac{N}{t}\|\langle\cdot\rangle\langle\nabla\rangle u_\lambda^\circ\|_{\Ld^2}+\big(\tfrac{N}{t}\big)^2\|\langle\cdot\rangle^2 u_\lambda^\circ\|_{\Ld^2}.
\end{gather*}
To estimate the seventh RHS term in~\eqref{eq:main-decomp+}, we proceed to a similar computation as in~\eqref{eq:decomp-triangle-four}, and we easily find for $\lambda\le\frac12\big(CRK^{s_0+M+1}\ell^{s_0+M+1}\big)^{-1}$,
\[\|(\tfrac{|\cdot|}{t})\tilde V_\lambda^{\ell,p;t}\|_{\Ld^2}\,\lesssim\,\|\langle\nabla\rangle^3u_\lambda^\circ\|_{\Ld^2}+\tfrac{N}{t}\|\langle\cdot\rangle\langle\nabla\rangle^2u_\lambda^\circ\|_{\Ld^2}.\]
The eighth RHS term in~\eqref{eq:main-decomp+}, when averaging over $p$, is estimated by
\begin{align*}
\frac1N\sum_{p=1}^N\|U_{\lambda,1}^{\ell,p;t}-\tilde U_{\lambda,1}^{\ell,p;t}\|_{\Ld^2}^2\,\le\,\frac1N\sum_{p=1}^N\int_{\Rc_{\lambda R,N}^{p}\setminus\Rc_{\lambda R,N}^{p+1}}|\hat u_\lambda^\circ(k)|^2d^*k
\,\le\,N^{-1}\|u_\lambda^\circ\|_{\Ld^2}^2,
\end{align*}
and similarly, further using~\eqref{eq:bound-nu-freq}, for $\lambda\le\frac12\big(CRK^{s_0+M+1}\ell^{s_0+M+1}\big)^{-1}$, the last RHS term in~\eqref{eq:main-decomp+} is estimated by
\[\frac1N\sum_{p=1}^N\|V_{\lambda}^{\ell,p;t}-\tilde V_\lambda^{\ell,p;t}\|_{\Ld^2}\,\le\,\Big(\frac1N\sum_{p=1}^N\|V_{\lambda}^{\ell,p;t}-\tilde V_\lambda^{\ell,p;t}\|_{\Ld^2}^2\Big)^\frac12\,\le\,N^{-\frac12}\|\langle\nabla\rangle^2u_\lambda^\circ\|_{\Ld^2}.\]
Injecting all the above estimates into~\eqref{eq:main-decomp+}, and using that the initial data satisfy
\[\|u_\lambda^\circ\|_{\Ld^2}\le1,\qquad \|\langle\nabla\rangle^3u_\lambda^\circ\|_{\Ld^2}\le\lambda^{-3},\]
which entails by interpolation (cf. Lin~\cite{Lin-86})
$$
\|\langle\cdot\rangle\langle\nabla\rangle u_\lambda^\circ\|_{\Ld^2}\, \lesssim \,
\|u_\lambda^\circ\|_{\Ld^2}^{1/3}\|\langle\nabla\rangle^3 u_\lambda^\circ\|_{\Ld^2}^{1/3}\|\langle\cdot\rangle^3 u_\lambda^\circ\|_{\Ld^2}^{1/3}
\, \lesssim\,
\lambda^{-1}\|\langle\cdot\rangle^3u_\lambda^\circ\|_{\Ld^2}^{1/3}
$$
and 
$$
\|\langle\cdot\rangle\langle\nabla\rangle^2 u_\lambda^\circ\|_{\Ld^2}
\,\lesssim\, \| \langle\nabla\rangle^3 u_\lambda^\circ\|_{\Ld^2}^{2/3} \|\langle\cdot\rangle^3 u_\lambda^\circ\|_{\Ld^2}^{1/3}
\,\lesssim\, \lambda^{-2}\|\langle\cdot\rangle^3u_\lambda^\circ\|_{\Ld^2}^{1/3},
$$
we conclude for $t,C_0,N\ge1$, $\ell\ge2$, $K\ge C(\ell\log\ell)^\frac1\alpha$, and $\lambda\le\frac12\big(CRK^{s_0+M+1}\ell^{s_0+M+1}\big)^{-1}$,
\begin{multline*}
\lambda^2M_1^t(u_\lambda)^2\ge \lambda^2\Big(\int_{\R^d\setminus\Rc_{\lambda R}}|k|^2|\hat u_\lambda^\circ(k)|^2d^*k\Big)\\
-C(C_0\lambda)^2\Big(N^{-\frac12}+\lambda RCK^{s_0+2M+1}\ell^{s_0+M+1}+t(\lambda RCK^{s_0+2M+1}\ell^{s_0+M+1})^\ell+te^{-K^\alpha}\Big)^2\\
-C(\lambda+C_0^{-1})\tfrac{N}{t}\|\langle\cdot\rangle^3 u_\lambda^\circ\|_{\Ld^2}^\frac13
-C(\lambda^4+C_0^{-4})\big(\tfrac{N}{t}\big)^4\|\langle\cdot\rangle^3 u_\lambda^\circ\|_{\Ld^2}^\frac43\\
-C\big(\lambda^2+N^{-\frac12}+(C_0\lambda)^{-1}+(C_0\lambda)^{-2}\big).
\end{multline*}
For $\gamma<\frac1{2d+1}$, choosing
\begin{eqnarray*}
R\,=\,R(\lambda)&:=&\lambda^{-2d\gamma},\\
K\,=\,K(\lambda)&:=&C\big(\ell(\lambda)\log \ell(\lambda)\big)^\frac1\alpha,\\
\ell\,=\,\ell(\lambda)&:=&\lambda^{-\frac{\alpha(1-(2d+1)\gamma)}{s_0+2M+1+\alpha(s_0+M+1)}},\\
C_0\,=\,C_0(\lambda)&:=&\lambda^{-1-\frac\gamma4},\\
N\,=\,N(\lambda,u_\lambda^\circ,t)&:=&t^{\frac23}\lambda^{-\frac{2+\gamma}3}\|\langle\cdot\rangle^3u_\lambda^\circ\|_{\Ld^2}^{-\frac29},
\end{eqnarray*}
we deduce for $\lambda\ll1$ and $t\ge\|\langle\cdot\rangle^3u_\lambda^\circ\|_{\Ld^2}^\frac13$, with $s:=\frac{s_0+2M+1+\alpha(s_0+M+1)}{\alpha(1-(2d+1)\gamma)}$,
\[\lambda^2M_1^t(u_\lambda)^2\ge \lambda^2\Big(\int_{\R^d\setminus\Rc_{\lambda R}}|k|^2|\hat u_\lambda^\circ(k)|^2d^*k\Big)-C\lambda^\frac\gamma4-C(te^{-\lambda^{-\frac1s}})^2.\]
Setting $G:=\bigcup_{n=0}^\infty A_n\setminus \Rc_{(2^{-n})^{1-2d\gamma}}$, in terms of the dyadic annuli $A_n:=B_{2^n}\setminus B_{2^{n-1}}$ for $n\ge1$ and $A_0:=B$, the conclusion follows.


\section{Classical flow }\label{app:gen}

This section is devoted to the main results for the wave flow.
It is organized as follows.
We start by adapting the approximate stationary Floquet-Bloch theory to the operator $-\nabla\cdot(\Id+\lambda a)\nabla$, defining a notion of approximate Bloch waves via the corresponding Rayleigh-Schr\"odinger perturbation series.
We then turn to the proof of Theorem~\ref{thm:class-wave-qp} and we mainly focus on the differences wrt the proofs of the corresponding results for the Schr\"odinger flow, to which we refer the reader for most of the arguments.
Finally, we study the case of peaked initial data in Fourier space and prove Corollary~\ref{cor:class-wave-qp}.

\subsection{Approximate Bloch waves}\label{sec:7.3}
Consider the operator $-\nabla\cdot(\Id+\lambda a)\nabla$ where $\lambda a$ is a quasiperiodic perturbation in the sense of~(QP$'$). We construct a branch of approximate Bloch waves in terms of the corresponding Rayleigh-Schrödinger series for the perturbed fibered operator $-(\nabla+ik)\cdot (\Id+\lambda a)(\nabla+ik)=-\triangle_k+|k|^2-\lambda (\nabla+ik)\cdot a(\nabla+ik)$.
The corresponding Rayleigh-Schrödinger coefficients $(\nu_{k}^n,\phi_{k}^n)_{k,n}$ (compare to Definition~\ref{def:taylor-waves} in the Schrödinger case) then takes on the following guise:
\begin{enumerate}[$\bullet$]
\item $\nu_k^n:=-ik\cdot\expec{a(\nabla+ik)\phi_k^n}$ for all $n\ge0$;
\item for all $k\in\R^d$, we have $\phi_{k}^0\equiv 1$, and for all $n$ the function $\phi_{k}^{n+1}$ satisfies $\expec{\phi_{k}^{n+1}}=0$ and
\begin{gather}\label{e.def-corr-per+wave}
-\triangle_k \phi_{k}^{n+1}=\,\Pi\,(\nabla+ik)\cdot a(\nabla +ik) \phi_{k}^{n}+\sum_{l=0}^{n-1}\nu_{k}^l \phi_{k}^{n-l}.\nonumber
\end{gather}
\item As before, we set $\kappa_{k,\lambda}^{\ell}=\lambda \sum_{n=0}^\ell\lambda^n \nu_k^n$ for all $k\in O$.
In the statement of Theorem~\ref{thm:class-wave-qp}, we further use the short-hand notation $\tilde \kappa_{k,\lambda}^\ell$ for $\min\{-|k|^2,\kappa_{k,\lambda}^{\ell}\}$ on $O$, extended by zero outside $O$.
\end{enumerate}
We are exactly in the same situation as in the previous sections with the multiplication by $V$ replaced by the operator $-(\nabla+ik)\cdot a(\nabla+ik)$.
A direct adaptation of the proof of Proposition~\ref{prop:cor-QP} yields the following corrector estimates.

\begin{prop}\label{prop:cor-QP-wave}
Consider the quasiperiodic setting~\emph{(QP$'$)}, assume that the winding matrix $F$ satisfies the Diophantine condition~\eqref{eq:classical-Dioph2} with $r_0>0$, let $s_0>M+r_0$, assume that the lifted map $\tilde a$ has compactly supported Fourier transform, and set $K:=\sup\{1\vee|\xi|:\xi \in \supp \F\tilde a\}$.
Then there exists a constant $C$ (depending on $F,M,s_0$) and for all $\ell,R\ge1$ there exists a field of $\ell$-jets of Bloch waves $(\nu_{k}^n,\phi_{k}^n:k\in\R^d\setminus\Rc_R^{K\ell},\,0\le n\le\ell)$ in the above sense, which satisfy
the following estimates for all $n\ge1$, $k\in\R^d\setminus\Rc_R^{Kn}$, and $s,j\ge0$,
\begin{eqnarray}
|\nabla_k^j\nu_{k}^n|&\le&\langle k\rangle^{2n}(CRjK^{s_0+1}n^{s_0+1})^j(CRK^{s_0+M}n^{s_0})^{n}\|\F \tilde a\|_{\Ld^\infty}^{n+1},\label{eq:corr-bound1-wave}\\
\|\nabla_k^j \phi_k^n\|_{H^s(\Omega)}&\le&\langle k\rangle^{2n} (CKn)^{s}(CRjK^{s_0+1}n^{s_0+1})^j(CRK^{s_0+M}n^{s_0})^{n}\|\F \tilde a\|_{\Ld^\infty}^n.\nonumber
\end{eqnarray}
and for all $\hat u\in C^\infty_c(\R^d)$ supported in $\R^d\setminus\Rc_R^{Cn}$,
\begin{align}\label{eq:corr-bound2-wave}
&\sup_{\omega\in\Omega}\bigg(\int_{\R^d}\Big|\int_{\R^d}e^{ik\cdot x}\,\nabla_k^j\nabla^s\phi_k^n(x,\omega)\,\hat u(k)\,d^*k\Big|^2dx\bigg)^\frac12 
\\
&\qquad\,\le\,(CKn)^{s+M+1}(CRjK^{s_0+1}n^{s_0+1})^j(CRK^{s_0+M}n^{s_0})^{n}\|\F\tilde a\|_{\Ld^\infty}^n\,\|\langle\cdot\rangle^{2n}\hat u\|_{\Ld^2}.\qedhere
\end{align}
\end{prop}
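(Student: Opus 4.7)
The plan is to adapt the proof of Proposition~\ref{prop:cor-QP} with the multiplication operator $V$ replaced by the $k$-dependent differential operator $W_k := -(\nabla+ik)\cdot a(\nabla+ik)$, and track the additional powers of $\langle k\rangle$ generated by the two factors $(\nabla + ik)$. First, I would re-run the tree-formula argument of Proposition~\ref{prop:sol-nonlin-rec} with $V$ replaced by $W_k$: the fixed-point, Neumann and Lagrange expansions go through verbatim, using that $W_k$ is relatively bounded with respect to $-\triangle_k$ so that $\lambda\, \Gamma_{k,\mu}(\kappa)\Pi W_k$ has small operator norm on $\Ld^2(\Omega)$ for $\lambda$ small (at fixed $\mu$, $k$). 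This yields exactly the same tree formulas as in Proposition~\ref{prop:sol-nonlin-rec}, only with each occurrence of $V$ replaced by $W_k$.

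The analysis then reduces, exactly as in Step~1 of the proof of Proposition~\ref{prop:cor-QP}, to bounding the building blocks
\[
\chi_k^{m,b}\,:=\,(-\triangle_k)^{-b_1-1}\Pi W_k\, (-\triangle_k)^{-b_2-1}\Pi W_k \cdots (-\triangle_k)^{-b_m-1}\Pi W_k\, 1,
\]
and their $H^s(\Omega)$ norms. Using Fourier series on $\Omega=\T^M$, the identities $\F[(\nabla+ik) g](\xi)=i(F\xi+k)\hat g(\xi)$ and $\F[ag](\xi)=\sum_\zeta \hat a(\xi-\zeta)\hat g(\zeta)$ yield
\[
\hat\chi_k^{m,b}(\xi_1)\,=\,\mathds1_{\xi_1\ne 0}\!\!\sum_{\xi_2,\ldots,\xi_m\in\Z^M\setminus\{0\}}\,\prod_{j=1}^m\frac{(F\xi_j+k)\cdot\hat a(\xi_j-\xi_{j+1})(F\xi_{j+1}+k)}{(|F\xi_j+k|^2-|k|^2)^{b_j+1}},
\]
with the convention $\xi_{m+1}=0$. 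The support condition $|\xi_j-\xi_{j+1}|\le K$ coming from $\supp\F\tilde a\subset B_K$ allows to telescope $|F\xi_j|\le (m-j+1)K|F|$, hence $|F\xi_j+k|\le |k|+(m-j+1)K|F|$, which produces the product estimate $\prod_{j=1}^m|F\xi_j+k||F\xi_{j+1}+k|\le \langle k\rangle^{2m}(Cm K)^{2m}$. The denominators are controlled by the Diophantine estimate~\eqref{eq:dioph} from Lemma~\ref{lem:dioph} exactly as in Proposition~\ref{prop:cor-QP} (on $\R^d\setminus\Rc_R^{Km}$, using $|\xi_j|\le mK$), yielding
\[
|\chi_k^{m,b}(\omega)|\,\le\,\langle k\rangle^{2m}(CK)^{mM}\big(RK^{s_0}m^{s_0}\big)^{m+|b|}\|\F\tilde a\|_{\Ld^\infty}^m,
\]
with a similar bound for $\|\chi_k^{m,b}\|_{H^s(\Omega)}$ after inserting a factor $(CKm)^s$ as in the Schrödinger case. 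Injecting this into the (wave-adapted) tree formulas and using $\sharp\Tc_m\le 4^m$ to sum over trees, the estimates~\eqref{eq:corr-bound1-wave} for $j=0$ follow.

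For $j\ge 1$, the only novelty with respect to Step~2 of the proof of Proposition~\ref{prop:cor-QP} is that $\partial_k$ may now act on the operator $W_k$ itself, producing extra terms of the form $-i\,e_\ell\cdot a(\nabla+ik)$ or $-i(\nabla+ik)\cdot a\,e_\ell$; each such term contributes one factor $\langle k\rangle$ instead of two, which is absorbed by the existing $\langle k\rangle^{2m}$. The derivative estimate for the resolvent $|\nabla_k^j(|F\xi+k|^2-|k|^2)^{-1}|\le (Cjn)^j(Rn^{s_0})^j\,||F\xi+k|^2-|k|^2|^{-1}$ used in the Schrödinger case still applies, and combining these via Leibniz completes~\eqref{eq:corr-bound1-wave}. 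Finally,~\eqref{eq:corr-bound2-wave} is obtained from~\eqref{eq:corr-bound1-wave} via the same Sobolev embedding $H^{M+1}(\T^M)\hookrightarrow\Ld^\infty(\T^M)$ and Parseval argument as in Step~2 of the proof of Proposition~\ref{prop:cor-QP}; the $\langle\cdot\rangle^{2n}\hat u$ norm in the RHS accommodates the $\langle k\rangle^{2n}$ pre-factor. The main (purely bookkeeping) obstacle will be keeping track of how the $k$-derivatives distribute across the $2m$ factors $(\nabla+ik)$ and the $m$ resolvents, but the Leibniz rule combined with the telescoping bound above keeps the combinatorics at the same $C^n$ order as in the Schrödinger case.
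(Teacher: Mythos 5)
The paper does not give a standalone proof of this proposition: it simply says "A direct adaptation of the proof of Proposition~\ref{prop:cor-QP} yields the following corrector estimates," and the only detailed wave-specific computation appearing in the paper is the tree formula in the proof of Proposition~\ref{prop:rep-nablaik-corr}. Your proposal is precisely that direct adaptation, carried out in full: the fixed-point/Lagrange/Neumann argument yielding the tree formulas with $V\leadsto W_k=-(\nabla+ik)\cdot a(\nabla+ik)$, the Fourier kernel of the chains $\chi_k^{m,b}$ with the extra factors $(F\xi_j+k)\cdot\hat a(\xi_j-\xi_{j+1})(F\xi_{j+1}+k)$, the telescoping support bound $|\xi_j|\le (m-j+1)K$, the Diophantine control of the denominators, and the additional Leibniz terms coming from $\nabla_k W_k$ for $j\ge1$. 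This matches the paper's intended argument.

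One bookkeeping remark. Tracking the $(F\xi+k)$-degree as you describe, each chain occurring in the tree expansion of $\nu_k^n$ is of the averaged form $\expecm{W_k\,\Gamma^{b_1+1}\Pi W_k\cdots\Gamma^{b_{c}+1}\Pi W_k}$ and carries $2(c+1)$ such factors (the two endpoint factors being $|k|$), so that summing $c_i+1$ over the $m$ nodes of a rooted tree of total weight $|c|=n+1-m$ gives $2(n+1)$ factors — hence a power $\langle k\rangle^{2(n+1)}$ for $|\nabla_k^j\nu_k^n|$, not $\langle k\rangle^{2n}$ as stated. (This is also what the direct definition $\nu_k^n=-ik\cdot\expec{a(\nabla+ik)\phi_k^n}$ gives, picking up one extra $\langle k\rangle^2$ from the outer $k$ and the factor $(F\xi+k)$, on top of $\langle k\rangle^{2n}$ for $\phi_k^n$; one can check it explicitly on $\nu_k^1$ for a single nonzero Fourier mode of $a$.) The $\langle k\rangle^{2n}$ for $\phi_k^n$, on the other hand, is consistent with your count. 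This shift by one does not break anything downstream, but you should state the $\nu$-estimate with exponent $2(n+1)$ if you carry out the proof literally, and flag that the statement as printed appears to be off by this harmless factor.
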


For later purposes, we further show that the corrector $\phi_{k}^n$ can be written as $\phi_{k}^n=(\nabla+ik)\cdot \Phi_{k}^n$ for some $\Phi_{k}^n$ that satisfies the same bounds as $\phi_{k}^n$ itself.

\begin{prop}\label{prop:rep-nablaik-corr}
Under the assumptions of Proposition~\ref{prop:cor-QP-wave}, for all $n\ge0$ and $k\in \R^d\setminus\Rc_R^{Kn}$, there exists $\Phi_{k}^n \in \Ld^2(\Omega)^d$
such that $\phi_{k}^n=(\nabla +ik)\cdot \Phi_{k}^n$ and such that $\Phi_{k}^n$ satisfies the same bounds~\eqref{eq:corr-bound1-wave}--\eqref{eq:corr-bound2-wave} multiplied by the factor $\langle k\rangle^{-1}(1+|k|^{-1})^{2+2(n-1)\vee0}$.
\qedhere
\end{prop}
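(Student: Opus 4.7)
The plan is to proceed by strong induction on $n$, building $\Phi_{k}^n$ directly from the recursion that defines $\phi_{k}^n$. The argument rests on two elementary algebraic facts: first, the operators $-\triangle_k=-(\nabla+ik)\cdot(\nabla+ik)-|k|^2$ and $(\nabla+ik)\cdot$ commute on stationary functions; second, for any stationary vector field $f\in\Ld^2(\Omega)^d$ one has $\Pi(\nabla+ik)\cdot f=(\nabla+ik)\cdot(f-\expec{f})$, since by stationarity $\expecm{\nabla\cdot f}=0$. These two facts allow the ``$(\nabla+ik)\cdot$'' structure of the perturbation $-(\nabla+ik)\cdot a(\nabla+ik)$ to be propagated outwards through each inversion of $-\triangle_k$ appearing in the construction of the correctors.

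For the base case $n=0$, take $\Phi_{k}^0:=-ik/|k|^2$, so that $(\nabla+ik)\cdot\Phi_{k}^0=1=\phi_{k}^0$ and $|\Phi_{k}^0|=|k|^{-1}\le\langle k\rangle^{-1}(1+|k|^{-1})^2$, in agreement with the claimed estimate. For the inductive step, assume that $\phi_{k}^l=(\nabla+ik)\cdot\Phi_{k}^l$ with $\Phi_{k}^l$ mean-zero for all $1\le l\le n$, together with the required bounds. Substituting these representations into the recursion of Section~\ref{sec:7.3} and applying the two facts above yields $-\triangle_k\phi_{k}^{n+1}=(\nabla+ik)\cdot G_{k}^{n+1}$, where
\[G_{k}^{n+1}\,:=\,a(\nabla+ik)\phi_{k}^n-\expecm{a(\nabla+ik)\phi_{k}^n}+\sum_{l=0}^{n-1}\nu_{k}^l\,\Phi_{k}^{n-l}.\]
The identity $ik\cdot\expec{\Phi_{k}^{n-l}}=\expecm{(\nabla+ik)\cdot\Phi_{k}^{n-l}}=\expec{\phi_{k}^{n-l}}=0$ (valid for $n-l\ge 1$) implies $ik\cdot\expec{G_{k}^{n+1}}=0$. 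Define $\Phi_{k}^{n+1}$ as the unique mean-zero solution in $\Ld^2(\Omega)^d$ of $-\triangle_k\Phi_{k}^{n+1}=G_{k}^{n+1}-\expec{G_{k}^{n+1}}$, well-defined on $\R^d\setminus\Rc_R^{K(n+1)}$ by Lemma~\ref{lem:dioph}. Commuting $(\nabla+ik)\cdot$ through $-\triangle_k$ and using $ik\cdot\expec{G_{k}^{n+1}}=0$, one checks $-\triangle_k((\nabla+ik)\cdot\Phi_{k}^{n+1})=(\nabla+ik)\cdot G_{k}^{n+1}=-\triangle_k\phi_{k}^{n+1}$; since both sides are mean-zero and $-\triangle_k$ is injective on mean-zero stationary functions for $k\in\Oc$, this forces $\phi_{k}^{n+1}=(\nabla+ik)\cdot\Phi_{k}^{n+1}$, completing the induction.

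The quantitative estimates on $\Phi_{k}^n$ then follow from the same tree-counting analysis underlying Propositions~\ref{prop:sol-nonlin-rec} and~\ref{prop:cor-QP-wave}, adapted so that the outermost factor $(\nabla+ik)\cdot$ in the Rayleigh-Schr\"odinger tree formula is pulled out before bounding the remaining product of resolvents. Each internal factor $\Pi(\nabla+ik)\cdot a(\nabla+ik)$ still contributes the $\langle k\rangle^2$-growth at large $|k|$ as in Proposition~\ref{prop:cor-QP-wave}, while the removal of one factor $(\nabla+ik)$ accounts for the improved prefactor $\langle k\rangle^{-1}$. The low-frequency loss $(1+|k|^{-1})^{2+2(n-1)\vee 0}$ is generated by the recursive appearance of the earlier $\Phi_{k}^l$'s in $G_{k}^{n+1}$, each of which already carries such a loss, together with the zero-Fourier-mode subtraction $G_{k}^{n+1}\leadsto G_{k}^{n+1}-\expec{G_{k}^{n+1}}$ required at each step to ensure solvability of the cell problem.

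The main obstacle I anticipate is this precise bookkeeping of the small-$|k|$ singularity: the Diophantine estimate of Lemma~\ref{lem:dioph} controls the high-$|\xi|$ behavior uniformly in $k$, but gives no help at small $|k|$, where the $|k|^{-1}$ factor from $\Phi_{k}^0$ must be shown to propagate with at most the claimed doubling rate. Making this accounting sharp will require carrying out the induction on the explicit tree representation of $\Phi_{k}^n$ analogous to Proposition~\ref{prop:sol-nonlin-rec}, and estimating each tree contribution with one factor $(\nabla+ik)\cdot$ pulled out, in the spirit of the proofs of Propositions~\ref{prop:cor-QP} and~\ref{prop:cor-QP-wave}.
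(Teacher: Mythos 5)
Your construction is correct but takes a genuinely different route from the paper's. The paper first proves the algebraic identity
\begin{equation*}
(-\triangle_k)^{-b-1}(\nabla+ik)\cdot G=\frac1{(-|k|^2)^{b+1}}(\nabla+ik)\cdot\Big(\Id+(\nabla+ik)(-\triangle_k)^{-1}(\nabla+ik)\cdot\Big)^{b+1}G,
\end{equation*}
and injects it into the explicit tree formula for $\phi_k^n$ (the wave analogue of Proposition~\ref{prop:sol-nonlin-rec}); this pulls a single global factor $(\nabla+ik)\cdot$ out of the tree and yields a \emph{closed-form} expression for $\Phi_k^n$, whose estimates then transfer term by term from those of $\phi_k^n$, the explicit prefactor $\frac1{(-|k|^2)^{b_1+1}}$ with $b_1\le n-1$ accounting for the stated $(1+|k|^{-1})^{2+2(n-1)\vee 0}$ loss. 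You instead run a recursion, using $\Pi(\nabla+ik)\cdot f=(\nabla+ik)\cdot(f-\expec{f})$ and commutativity of $-\triangle_k$ with $(\nabla+ik)\cdot$ to carry the divergence structure through each cell problem, defining $\Phi_k^{n+1}$ by inverting $-\triangle_k$ on the source $G_k^{n+1}$ and matching with $\phi_k^{n+1}$ by injectivity of $-\triangle_k$ on mean-zero fields. The algebra is sound, and note that under your induction hypothesis ($\Phi_k^l$ mean-zero for $1\le l\le n$) one has $\expec{G_k^{n+1}}=0$ automatically, so the explicit subtraction of $\expec{G_k^{n+1}}$ is superfluous. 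What your route buys: your $\Phi_k^n$ differs from the paper's (they differ by a $(\nabla+ik)$-divergence-free field), and at $j=0$ it is in fact \emph{less} singular at small $|k|$ than the statement requires, since inverting $-\triangle_k$ on mean-zero data does not manufacture any $|k|^{-1}$ factor — e.g.\ your $\Phi_k^1=(-\triangle_k)^{-1}\bigl(ik(a-\expec{a})\bigr)$ is $O(|k|)$. What it costs: without the closed form, the estimates \eqref{eq:corr-bound1-wave}--\eqref{eq:corr-bound2-wave} — in particular the $\nabla_k^j$ case, where differentiating $(-\triangle_k)^{-1}$ inserts extra resolvents and powers of $R|\xi|^{s_0+1}$ as in Step~2 of the proof of Proposition~\ref{prop:cor-QP} — require a separate inductive bookkeeping, which you correctly identify as the remaining work; the paper, having an explicit tree formula in hand, can simply assert the estimates follow ``exactly as for $\phi_k^n$''.
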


\begin{proof}
As in Proposition~\ref{prop:sol-nonlin-rec}, the correctors can be expressed via the following tree formulas: for all $n\ge1$,
\begin{multline}\label{eq:tree-form-diva-a0}
\phi_{k}^n=\sum_{m=1}^n\sum_{\ell=0}^{n-m}\sum_{c\in\N^\ell\atop|c|=n-m-\ell}\sum_{b\in\N^m\atop|b|=\ell}\nu_{k}^{c_1}\ldots\nu_{k}^{c_\ell}\\
\times(-\triangle_k)^{-b_1-1}\Pi  (\nabla+ik)\cdot a(\nabla+ik)\\
\ldots (-\triangle_k)^{-b_m-1}\Pi  (\nabla+ik)\cdot a(\nabla+ik)1.
\end{multline}
On the one hand, we claim that for all $G\in C^\infty_c(\R^d)^d$ there holds
\begin{equation*}
(-\triangle_k)^{-b-1}(\nabla+ik)\cdot G
=\frac1{(-|k|^2)^{b+1}}(\nabla+ik)\cdot\Big(\Id+(\nabla+ik)(-\triangle_k)^{-1}(\nabla+ik)\cdot\Big)^{b+1}G.
\end{equation*}
Indeed, the relation
\[-\triangle_kf=(\nabla+ik)\cdot G\]
implies $f=(\nabla+ik)\cdot F$ with
\[F=\frac1{-|k|^2}\big(G+ (\nabla+ik)f\big),\]
and the claim follows.
Injecting this identity into~\eqref{eq:tree-form-diva-a0} and decomposing the first projection $\Pi=\Id-\expec{\cdot}$, we deduce $\phi_{k}^n=(\nabla+ik)\cdot\Phi^n_{k}$ where $\Phi_{k}^n$ is given by
\begin{align*}
\Phi_{k}^n&\,=\,\sum_{m=1}^n\sum_{\ell=0}^{n-m}\sum_{c\in\N^\ell\atop|c|=n-m-\ell}\sum_{b\in\N^m\atop|b|=\ell}\frac{\nu_{k}^{c_1}\ldots\nu_{k}^{c_\ell}}{(-|k|^2)^{b_1+1}}\\
&\qquad\times\Big(\Id+(\nabla+ik)(-\triangle_k)^{-1}(\nabla+ik)\cdot\Big)^{b_1+1} a(\nabla+ik)\\
&\qquad\times(-\triangle_k)^{-b_2-1} \Pi (\nabla+ik)\cdot a(\nabla+ik)\\
&\qquad\qquad\qquad\ldots (-\triangle_k)^{-b_m-1}\Pi  (\nabla+ik)\cdot aik .
\end{align*}
We may now proceed to the estimate of $\Phi_{k}^n$ exactly as for $\phi_{k}^n$. The details are left to the reader.
\end{proof}

\subsection{Proof of Theorem~\ref{thm:class-wave-qp}}\label{sec:7.4}
The proof of Theorem~\ref{thm:class-wave-qp} follows the general strategy of Section~\ref{chap:pr-taylorbloch}:
the initial data $(u^\circ,v^\circ)$ are first replaced by an approximate Bloch expansion and the corresponding flow then admits an explicit approximate formula using 
approximate Bloch waves (since the latter approximately diagonalize the wave operator). In order to control the errors, we exploit the corrector estimates of Proposition~\ref{prop:cor-QP-wave}.
We recall that $\Ld^2$-estimates for the wave equation are as follows.

\begin{lem}[$\Ld^2$-estimates for classical waves]\label{prop:moments-wav}
Given $z^\circ,w^\circ_1\in\Ld^2(\R^d)$, $w^\circ_2 \in \Ld^2(\R^d)^d$, $F\in\Ld^1_\loc(\R^+;\Ld^2(\R^d))$, and given a uniformly elliptic matrix field $\Aa$, denote by $z$ the solution of the classical wave equation
\begin{align*}
(\partial_{tt}^2-\nabla\cdot\Aa\nabla)z=F,\qquad z|_{t=0}=z^\circ,\qquad \partial_tz|_{t=0}=w^\circ_1+\nabla \cdot w_2^\circ.
\end{align*}
Then, for all $t\ge0$, we have
\[\|z\|_{\Ld^\infty_t\Ld^2}\lesssim\|z^\circ\|_{\Ld^2}+t\|w_1^\circ\|_{\Ld^2}+\|w_2^\circ\|_{\Ld^2}+\int_0^t\Big\|\int_0^sF\Big\|_{\Ld^2}ds.\qedhere\]
\end{lem}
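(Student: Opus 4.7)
The key observation is that the estimate asks for $\|z\|_{\Ld^2}$ controlled by $\|z^\circ\|_{\Ld^2}$ with no loss of derivative, whereas the standard energy estimate for the wave equation naturally controls only $\|\partial_t z\|_{\Ld^2}+\|\nabla z\|_{\Ld^2}$. To bypass this loss, the plan is to apply the energy estimate not to $z$ itself, but to its time antiderivative
\[w(t):=\int_0^t z(s)\,ds,\]
which satisfies $w(0)=0$, $\partial_t w(0)=z^\circ$, and, after integrating the wave equation once in time,
\[(\partial_{tt}^2-\nabla\cdot\Aa\nabla)w=w_1^\circ+\nabla\cdot w_2^\circ+\int_0^t F(s)\,ds.\]
Thus $w$ solves a wave equation with zero initial position, initial velocity of size $\|z^\circ\|_{\Ld^2}$, and a source term that is only as singular as the data $(w_1^\circ,w_2^\circ,F)$ themselves---crucially in divergence form when needed.

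Next I would run the standard energy identity on $w$. Setting $E(t):=\tfrac12\|\partial_tw(t)\|_{\Ld^2}^2+\tfrac12\langle\Aa\nabla w(t),\nabla w(t)\rangle$, uniform ellipticity of $\Aa$ gives $E\simeq\|\partial_tw\|_{\Ld^2}^2+\|\nabla w\|_{\Ld^2}^2$, and differentiating produces
\[\tfrac{d}{dt}E(t)=\int\partial_tw\cdot\Big(w_1^\circ+\nabla\cdot w_2^\circ+\int_0^tF\Big).\]
Since $\partial_tw=z$, the terms with $w_1^\circ$ and with the time-integrated source pair cleanly by Cauchy--Schwarz with $\|z(t)\|_{\Ld^2}\lesssim E(t)^{1/2}$. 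The delicate term is $\int\partial_tw\cdot\nabla\cdot w_2^\circ$, for which I integrate by parts both in space and in time, using $w(0)=0$, to rewrite
\[\int_0^t\!\!\int\partial_tw\cdot\nabla\cdot w_2^\circ\,ds\;=\;-\!\int\nabla w(t)\cdot w_2^\circ,\]
which is bounded by $E(t)^{1/2}\|w_2^\circ\|_{\Ld^2}$ and can be absorbed into $E(t)$ via Young's inequality.

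Putting everything together yields
\[E(t)\;\lesssim\;\|z^\circ\|_{\Ld^2}^2+\|w_2^\circ\|_{\Ld^2}^2+\sup_{[0,t]}\|z\|_{\Ld^2}\cdot\Big(t\|w_1^\circ\|_{\Ld^2}+\int_0^t\Big\|\int_0^sF\Big\|_{\Ld^2}ds\Big),\]
and, since $\|z(t)\|_{\Ld^2}^2\le2E(t)$, a final application of Young's inequality and a supremum in $t$ on the left-hand side gives the claimed bound after taking square roots.

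The main conceptual step is the antiderivative trick, which trades one unit of time regularity for one unit of spatial regularity in the data; the rest is a routine energy computation, with the only mildly subtle point being the time integration by parts that converts the divergence-form datum $\nabla\cdot w_2^\circ$ into a controllable $\|w_2^\circ\|_{\Ld^2}$ contribution using $w(0)=0$.
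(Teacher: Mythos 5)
Your proof is correct, and since the paper does not supply a proof of this lemma (it simply recalls the estimate as ``easily seen''), there is no argument of the authors' to compare against.

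Your antiderivative trick is the right idea and it runs cleanly: passing to $w(t)=\int_0^tz$ converts the problematic $\Ld^2$-in-$z$ estimate (where the usual energy identity loses a derivative) into a standard energy estimate for $w$, whose initial velocity is exactly $z^\circ$ and whose source is the time-antiderivative of the original data. The treatment of the $\nabla\cdot w_2^\circ$ term via the identity $\int_0^t\langle\partial_sw,\nabla\cdot w_2^\circ\rangle\,ds=\langle w(t),\nabla\cdot w_2^\circ\rangle=-\langle\nabla w(t),w_2^\circ\rangle$ (using $w(0)=0$ and that $w_2^\circ$ is time-independent) is the correct way to avoid any loss, and the final quadratic absorption argument closes the estimate. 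One small imprecision of wording: what you call ``integration by parts in time'' is really just the fundamental theorem of calculus applied to $\partial_s w$, but the computation is right. For completeness, an alternative and arguably more elementary route in the setting the paper actually applies this lemma to ($\Aa=\Id$, via the representation~\eqref{e.approx-sol-class}) is to write the solution as a Fourier multiplier, bound $|\cos(t|k|)|\le1$, $|\sin(t|k|)|/|k|\le t$, $|k|\cdot|\sin(t|k|)|/|k|\le1$, and integrate the Duhamel term once by parts in time to trade $\sin((t-s)|k|)/|k|$ against $\int_0^sF$. Your energy-method proof has the advantage of working for a general uniformly elliptic $\Aa$ without invoking the functional calculus for $-\nabla\cdot\Aa\nabla$, which matches the generality in which the lemma is stated.
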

\noindent
The rest of the proof is similar to that of Theorem~\ref{th:main-quasi} in Section~\ref{chap:pr-taylorbloch} with however three main differences:
\begin{enumerate}[(I)]
\item  In view of Lemma~\ref{prop:moments-wav}, general initial velocities $v^\circ$ in~\eqref{eq:class-a} lead the $\Ld^2$-norm of the flow to grow linearly in time. The initial data in Theorem~\ref{thm:class-wave-qp}  are chosen so that this does not happen, and we need to ensure that the various errors made in the approximation procedure do not grow in time either.
\item As opposed to quantum waves, the approximate solution is not necessarily well-defined here. This requires to make some further assumptions on the initial data, which can a posteriori be dropped by an approximation argument.
\item Comparing the $\Ld^2$-estimates for quantum and classical waves with a nontrivial source term $F$ in the equation, we note that an additional time integral appears in the case of classical waves (cf.~Lemma~\ref{prop:moments-wav}). If treated na\?ively, this would lead to an additional time factor in the error estimates and would reduce the maximal allowed timescale. However, we only have to deal with source terms $F$ displaying a particular structure that indeed ensures that $\int_0^tF$ remains bounded uniformly in time.
\end{enumerate}
We now briefly comment on these three differences.

\subsubsection*{Argument for \emph{(I)}}

Since the only additional difficulty here comes from the initial velocity, we assume $u^\circ\equiv0$, $v^\circ\not\equiv0$, so that~\eqref{eq:class-a} reads
\[\partial_{tt}^2u_\lambda=\nabla\cdot(\Id+\lambda a)\nabla u_\lambda,\qquad u_\lambda|_{t=0}=0,\qquad \partial_tu_\lambda|_{t=0}=v^\circ,\]
where $v^\circ$ has Fourier transform $\hat v^\circ$ compactly supported in $\R^d\setminus\{0\}$. In particular, this entails that $v^\circ=\nabla\cdot g^\circ$ for some $g^\circ\in H^1(\R^d)^d$.
The first step in the proof of Theorem~\ref{thm:class-wave-qp} is to replace $v^\circ$ by its approximate Bloch wave expansion
\begin{align}\label{eq:Bloch-expansion-v0}
Z_{\lambda}^{\ell;\circ}(x):=\int_{\R^d}e^{ik\cdot x}\,\psi_{k,\lambda}^\ell(x)\,\hat v^\circ(k)\,d^*k,
\end{align}
to consider the corresponding flow,
\[\partial_{tt}^2W_{\lambda}^{\ell}=\nabla\cdot(\Id+\lambda a)\nabla W_{\lambda}^{\ell}=0,\qquad W_{\lambda}^{\ell}|_{t=0}=0,\qquad \partial_tW_{\lambda}^{\ell}|_{t=0}=Z_{\lambda}^{\ell;\circ},\]
and to estimate $\|u_\lambda^t-W_{\lambda}^{\ell;t}\|_{\Ld^2}$.
In order to avoid a linear time growth, it is crucial to check that $Z_{\lambda}^{\ell;\circ}$ can be written in divergence form. This indeed follows from Proposition~\ref{prop:rep-nablaik-corr}, which yields  $Z_{\lambda}^{\ell;\circ}=\nabla\cdot g_{\lambda}^{\ell;\circ}$ with
\[g_{\lambda}^{\ell;\circ}:=\int_{\R^d}e^{ik\cdot x}\,\Psi_{k,\lambda}^\ell(x)\,\hat v^\circ(k)\,d^*k,\qquad\Psi_{k,\lambda}^\ell:=\sum_{n=0}^\ell\lambda^n\Phi_{k}^n.\]
By Lemma~\ref{prop:moments-wav}, we then obtain for all $t\ge0$,
\begin{align*}
\|u_\lambda^t-W_{\lambda}^{\ell;t}\|_{\Ld^\infty_t\Ld^2}\,\lesssim\,\|g^\circ-g_{\lambda}^{\ell;\circ}\|_{\Ld^2}
\,\le\,\sum_{n=1}^\ell\lambda^n\,\bigg(\int_{\R^d}\Big|\int_{\R^d}e^{ik\cdot x}\,\Phi_{k}^n(x)\,\hat v^\circ(k)\,d^*k\Big|^2dx\bigg)^\frac12,
\end{align*}
and it remains to use the corrector estimates of Proposition~\ref{prop:rep-nablaik-corr} for the $\Phi_{k}^n$'s to conclude.

\subsubsection*{Argument for \emph{(II)}}
The second step of the proof of Theorem~\ref{thm:class-wave-qp} consists in writing an approximate representation formula for the solution $W_{\lambda}^\ell$ of
\[\partial_{tt}^2W_{\lambda}^{\ell}=\nabla\cdot(\Id+\lambda a)\nabla W_{\lambda}^{\ell},\qquad W_{\lambda}^{\ell}|_{t=0}=W_{\lambda}^{\ell;\circ},\qquad \partial_tW_{\lambda}^{\ell}|_{t=0}=Z_{\lambda}^{\ell;\circ},\]
where $W_{\lambda}^{\ell;\circ}$ and $Z_{\lambda}^{\ell;\circ}$ are the approximate Bloch expansions of $u^\circ$ and $v^\circ$, cf.~\eqref{eq:Bloch-expansion-v0}.
It is natural to set
\[V_{\lambda}^{\ell;t}(x):=\int_{\R^d}\bigg(\cos\Big(t\sqrt{|k|^2+\kappa_{k,\lambda}^{\ell}}\Big)\hat u^\circ(k)+t\sinc\Big(t\sqrt{|k|^2+\kappa_{k,\lambda}^\ell}\Big)\hat v^\circ(k)\bigg)\,e^{ik\cdot x}\psi_{k,\lambda}^{\ell}(x)\,d^*k.\]
This is however a priori not well-defined since the approximate Bloch eigenvalues $|k|^2+\kappa_{k,\lambda}^{\ell}$ may be non-positive. To avoid this possibility, given $\ell,R\ge1$ and $\lambda_0>0$, we must further assume that the Fourier transform of the initial data $(u^\circ,v^\circ)$ is compactly supported in 
\[\Oc_{R,\lambda_0}^{\ell}\,:=\, \{k\in \R^d\setminus\Rc_R^{\ell}: |k|^2-\lambda_0\sum_{n=0}^\ell\lambda_0^n|\nu_k^n|\ge 0\}.\]
Under this assumption, for $0<\lambda\le\lambda_0$, the formula for $V_{\lambda}^{\ell}$ makes perfect sense.
Since $\Oc_{R,\lambda_0}^{\ell}\uparrow\R^d\setminus\Rc_R^{\ell}$ as $\lambda_0\downarrow0$, we deduce that this assumption on $(u^\circ,v^\circ)$ can a posteriori be dropped by an approximation argument as in the proof of Theorem~\ref{th:main-quasi}.

\subsubsection*{Argument for \emph{(III)}}
The error $W_{\lambda}^\ell-V_{\lambda}^\ell$ satisfies
\begin{gather*}
\big(\partial_{tt}^2-\nabla\cdot(\Id+\lambda a)\nabla\big)(W_{\lambda}^\ell-V_{\lambda}^\ell)=F_{\lambda}^\ell,\\
(W_{\lambda}^\ell-V_{\lambda}^\ell)|_{t=0}=0,\qquad\partial_t(W_{\lambda}^\ell-V_{\lambda}^\ell)|_{t=0}=0,
\end{gather*}
in terms of
\begin{eqnarray*}
F_{\lambda}^{\ell;t}(x)&:=&-\int_{\R^d}\bigg(\cos\Big(t\sqrt{|k|^2+\kappa_{k,\lambda}^{\ell}}\Big)\hat u^\circ(k)+t\sinc\Big(t\sqrt{|k|^2+\kappa_{k,\lambda}^{\ell}}\Big)\hat v^\circ(k)\bigg)\,e^{ik\cdot x}\\
&&\hspace{3cm}\times \big(-\triangle_k-\lambda(\nabla+ik)\cdot a(\nabla+ik)-\kappa_{k,\lambda}^\ell\big)\psi_{k,\lambda}^\ell(x)\,d^*k.
\end{eqnarray*}
We then apply Lemma~\ref{prop:moments-wav}
and we must estimate the contribution $\int_0^t\|\int_0^sF^{\ell}_{\lambda}\|_{\Ld^2}ds$ due to the source term. Note that this contribution displays two time integrals in contrast to the case of quantum waves. However, the explicit time integration of $F^\ell_{\lambda}$ yields
\begin{multline*}
\int_0^tF^{\ell;s}_{\lambda}(x)\,ds=-\int_{\R^d}\bigg(t\sinc\Big(t\sqrt{|k|^2+\kappa_{k,\lambda}^{\ell}}\Big)\hat u^\circ(k)-{\textstyle\frac{1-\cos\big(t\sqrt{|k|^2+\kappa_{k,\lambda}^{\ell}}\big)}{{|k|^2+\kappa_{k,\lambda}^\ell}}}\hat v^\circ(k)\bigg)\,e^{ik\cdot x}\\
\times \big(-\triangle_k-\lambda(\nabla+ik)\cdot a(\nabla+ik)-\kappa_{k,\lambda}^\ell\big)\psi_{k,\lambda}^\ell(x)\,d^*k,
\end{multline*}
which can then be estimated as in the case of quantum waves (without loosing any time factor).
A similar argument was used in~\cite[Proof of Proposition~3, Substep~3.2]{BG-16}.

We may then conclude that $W_{\lambda}^\ell$ remains close to $V_{\lambda}^\ell$ in a suitable regime, and therefore close to $U_{\lambda}^\ell$ defined by
\begin{equation}\label{e.approx-sol-class}
U_{\lambda}^{\ell;t}(x):=\int_{\R^d}\bigg(\cos\Big(t\sqrt{|k|^2+\kappa_{k,\lambda}^{\ell}}\Big)\hat u^\circ(k)+t\sinc\Big(t\sqrt{|k|^2+\kappa_{k,\lambda}^{\ell}}\Big)\hat v^\circ(k)\bigg)\,e^{ik\cdot x}\,d^*k,
\end{equation}
and solution of 
\[\pushQED{\qed}(\partial_{tt}^2-\triangle+\kappa_{\lambda,-i\nabla}^\ell)U_{\lambda}^\ell=0,\qquad U_{\lambda}^\ell|_{t=0}=u^\circ,\qquad\partial_tU_{\lambda}^\ell|_{t=0}=v^\circ.\qedhere\popQED\]

\subsection{Proof of Corollary~\ref{cor:class-wave-qp}}\label{sec:7.5} 
Let $k_0\in \Oc\setminus\{0\}$ and consider the case when the initial data in~\eqref{eq:class-a} is a slow modulation of the plane wave $x\mapsto e^{ik_0 \cdot x}$,
\begin{gather*}
\partial_{tt}^2 u_{\lambda,\e}=\nabla \cdot (\Id+\lambda a) \nabla u_{\lambda,\e},\nonumber\\
u_{\lambda,\e}(x)|_{t=0}=\e^{\frac d2} e^{ik_0 \cdot x} u^\circ(\e x), \qquad \partial_t u_{\lambda,\e}(x)|_{t=0}=\e^\frac d2 e^{ik_0 \cdot x}v^\circ(\e x),
\end{gather*}
where $\e$ satisfies a scaling relation $\e:=\lambda^{\beta}$ for some $\beta>0$.
Let $\ell\ge1$ be fixed.
We assume for simplicity that $(\hat u^\circ,\hat v^\circ)$ is compactly supported in the unit ball $B$ (say) and that $\tilde a$ is a trigonometric polynomial with $K:=\sup\{1\vee|\xi|:\xi\in\supp\F\tilde a\}<\infty$.
There exist $R=R(k_0,\ell,K)$ and $\e_0=\e_0(k_0,\ell,K)$ such that $B_\e(k_0)\subset\R^d\setminus(\Rc_R^{K\ell}\cup\{0\})$.
Since $\hat u_{\lambda,\e}(k)|_{t=0}=\e^{-\frac d2}\hat u^\circ(\frac1\e(k-k_0))$ and $\partial_t\hat u_{\lambda,\e}(k)|_{t=0}=\e^{-\frac d2}\hat v^\circ(\frac1\e(k-k_0))$,
the proof of Proposition~\ref{prop:taylorbloch} yields for all $t\ge0$, $\e\le\e_0$, and $\lambda\ll_{k_0,\ell,K}1$,
\begin{align}\label{eq:approx-uU-ellfix-wave}
\|u_{\lambda,\e}^t-U_{\lambda,\e}^{\ell;t}\|_{\Ld^2}
\,\lesssim_{k_0,\ell,K,u^\circ,v^\circ}\,\lambda(1+\lambda^\ell T),
\end{align}
in terms of
\begin{multline*}
U_{\lambda,\e}^{\ell;t}(x):=\e^{-\frac d2}\int_{\R^d}\bigg(\cos\Big(t\sqrt{|k|^2+\kappa_{k,\lambda}^{\ell}}\Big)\hat u^\circ(\tfrac1\e(k-k_0))\\
+t\sinc\Big(t\sqrt{|k|^2+\kappa_{k,\lambda}^{\ell}}\Big)\hat v^\circ(\tfrac1\e(k-k_0))\bigg)\,e^{ik\cdot x}\,d^*k,
\end{multline*}
where the smallness condition on $\lambda$ ensures that $|k|^2+\kappa_{k,\lambda}^\ell\ge 0$ on $B_\e(k_0)$.
We now simplify the formula for $U_{\lambda,\e}^{\ell}$ in the limit $\lambda,\e\downarrow0$.
We split the approximate flow $U_{\lambda,\e}^\ell$ into two contributions $U_{\lambda,\e}^\ell=\frac12(U_{\lambda,\e,+}^{\ell}+U_{\lambda,\e,-}^{\ell})$, where
\[U_{\lambda,\e,\pm}^{\ell;t}(x):=\e^{-\frac d2}\int_{\R^d}e^{\pm it\sqrt{|k|^2+\kappa_{k,\lambda}^\ell}}\textstyle\Big(\hat u^\circ(\tfrac1\e(k-k_0))\mp\frac{i\hat v^\circ(\frac1\e(k-k_0))}{\sqrt{|k|^2+\kappa_{k,\lambda}^\ell}}\Big)\,e^{ik\cdot x}\,d^*k.\]
Changing variables yields
$U_{\lambda,\e,\pm}^{\ell;t}(x)= \e^\frac d2 e^{ik_0\cdot x} R_{\lambda,\e,\pm}^{\ell;t}( \e x)$ with
\[R_{\lambda,\e,\pm}^{\ell;t}(x)\,:=\,\int_{\R^d}e^{\pm it\sqrt{|k_0+\e k|^2+\kappa_{k_0+\e k,\lambda}^\ell}}\textstyle\Big(\hat u^\circ (k)\mp\frac{i \hat v^\circ(k)}{\sqrt{|k_0+\e k|^2+\kappa_{k_0+\e k,\lambda}^\ell}}\Big)\,e^{ik\cdot x}\,d^*k.\]
The boundedness of $k\mapsto\kappa_{k,\lambda}^\ell$ on $\R^d\setminus\Rc_R^{K\ell}$ (cf.~Proposition~\ref{prop:cor-QP-wave}) yields for all $\e\le\e_0$ and $\lambda\ll_{k_0,\ell,K}1$,
\begin{align}\label{eq:approx-V-last}
R_{\lambda,\e,\pm}^{\ell;t}(x)\,=\,\int_{\R^d}e^{\pm it\sqrt{|k_0+\e k|^2+\kappa_{k_0+\e k,\lambda}^\ell}}\Big(\hat u^\circ (k)\mp\tfrac{i \hat v^\circ(k)}{|k_0|}\Big)\,e^{ik\cdot x}\,d^*k+O_{\ell,v^\circ}(\lambda+\e),
\end{align}
where the approximation holds in $\Ld^2(\R^d)$.
It remains to make a Taylor expansion of the square-root appearing in the time exponential.
We start by expanding the approximate Bloch eigenvalue: for $L\ge\ell$ and $\lambda\ll_{k_0,\ell,K}1$, using the boundedness of $k\mapsto\kappa_{k,\lambda}^\ell$ on $\R^d\setminus\Rc_R^{K\ell}$ (cf.~Proposition~\ref{prop:cor-QP-wave}),
\begin{eqnarray*}
\lefteqn{\sqrt{|k|^2+\kappa_{k,\lambda}^\ell}\,=\,|k|\sum_{n=0}^L\frac{(-1)^n(2n)!}{(1-2n)4^n(n!)^2}\Big(\frac{\kappa_{k,\lambda}^\ell}{|k|^2}\Big)^n+O_{k,L}((\kappa_{k,\lambda}^\ell)^{L+1})}\\
&=&\sum_{n=0}^L\lambda^n\frac{(-1)^n(2n)!}{(1-2n)4^n(n!)^2}\Big(\sum_{m=0}^\ell \lambda^m\nu_{k}^m\Big)^n|k|^{1-2n}+O_{k,\ell,L,K}(\lambda^{L+1})\\
&=&\sum_{n=0}^L\sum_{m=0}^{\ell}\lambda^{n+m}\bigg(\frac{(-1)^n(2n)!}{(1-2n)4^n(n!)^2}\sum_{\alpha\in\N^n\atop|\alpha|=m}\nu_{k}^{\alpha_1}\ldots \nu_{k}^{\alpha_n}\bigg)|k|^{1-2n}+O_{k,\ell,L,K}(\lambda ^{\ell+1}).
\end{eqnarray*}
Given $P\ge0$, we then replace $k$ by $k_0+\e k$ and Taylor expand the corresponding symbol up to order $P$, using the smoothness of $k\mapsto\nu_{k}^n$ on $\R^d\setminus\Rc_R^{K\ell}$ for $n\le\ell$ (cf.~Proposition~\ref{prop:cor-QP-wave}),
\begin{multline*}
{\sqrt{|k_0+\e k|^2+\kappa_{k_0+\e k,\lambda}^\ell}}
=\sum_{n=0}^L\sum_{m=0}^{\ell}\sum_{p=0}^P\lambda^{n+m}\e^p|k_0+\e k|^{1-2n}k^{\otimes p}\odot C_{n,m,p}^L(k_0)\\
+O_{k_0,\ell,L,P,K}(\lambda^{\ell+1}+\e^{P+1}),
\end{multline*}
where we have set
\[C_{n,m,p}^L(k_0):=\frac{(-1)^n(2n)!}{(1-2n)4^n(n!)^2}\sum_{\alpha\in\N^n\atop|\alpha|=m}\sum_{\beta\in\N^n\atop|\beta|=p}\frac1{\beta!}\nabla^{\beta_1}\nu_{k_0}^{\alpha_1}\otimes\ldots\otimes\nabla^{\beta_n}\nu_{k_0}^{\alpha_n}\]
(with the convention that the double sum equals $1$ if $n=0$).
Finally, noting that
\begin{eqnarray*}
\lefteqn{|k_0+\e k|^{1-2n}\,=\,\big(|k_0|^2+2\e k\cdot k_0+\e^2|k|^2 \big)^{\frac12-n}}
\\
&=&|k_0|^{1-2n}\sum_{s=0}^\infty(-1)^s\frac{\prod_{j=0}^{s-1}(2n-1+2j)}{2^ss!}\Big(2\e k\cdot \frac{k_0}{|k_0|^2}+\e^2 \frac{|k|^2}{|k_0|^2} \Big)^{s}\\
&=&|k_0|^{1-2n}\sum_{s=0}^\infty(-1)^s\frac{\prod_{j=0}^{s-1}(2n-1+2j)}{2^ss!}  \sum_{l=0}^s\binom{s}{l} \e^{2s-l} |k_0|^{l-2s}|k|^{2(s-l)}\Big(2 k\cdot \frac{k_0}{|k_0|}\Big)^l
\\
&=&\sum_{r=0}^P\e^r|k_0|^{1-2n-r}\sum_{0\le l\le s \atop 2s-l=r}(-1)^s\frac{\prod_{j=0}^{s-1}(2n-1+2j)}{2^{s-l}l!(s-l)!} \Big(\frac{k\cdot k_0}{|k_0|}\Big)^{l}|k|^{r-l}+O_{k_0,P}(\e^{P+1}),
\end{eqnarray*}
and using the boundedness of $k\mapsto\nu_{k}^n$ on $\R^d\setminus\Rc_R^{K\ell}$ for $n\le\ell$ (cf.~Proposition~\ref{prop:cor-QP-wave}), we conclude for all $\lambda,\e\ll_{k_0,\ell,P,K}1$,
\begin{align*}
{\sqrt{|k_0+\e k|^2+\kappa_{k_0+\e k,\lambda}^\ell}}
=\sum_{m=0}^{\ell}\sum_{p=0}^P\lambda^{m}\e^{p}k^{\otimes p}\odot C_{m,p}(k_0)
+O_{k_0,\ell,P,K}(\lambda^{\ell+1}+\e^{P+1}),
\end{align*}
where we have set
\begin{multline*}
C_{m,p}(k_0):=\sum_{r=0}^p\sum_{n=0}^m  |k_0|^{1-2n-r} \bigg(\sum_{0\le l\le s \atop 2s-l=r}  \frac{(-1)^s\prod_{j=0}^{s-1}(2n-1+2j)}{2^{s-l}l!(s-l)!} (\frac{k_0}{|k_0|})^{\otimes l}
\otimes \Id^{\otimes (s-l)}\bigg)\\
\otimes\bigg( \frac{(-1)^n(2n)!}{(1-2n)4^n(n!)^2}\sum_{\alpha\in\N^n\atop|\alpha|=m-n}\sum_{\beta\in\N^n\atop|\beta|=p-r}\frac1{\beta!}\nabla^{\beta_1}\nu_{k_0}^{\alpha_1}\otimes\ldots\otimes\nabla^{\beta_n}\nu_{k_0}^{\alpha_n}\bigg).
\end{multline*}
Note that the first terms in this series can be explicitly computed,
\begin{align*}
C_{0,0}=|k_0|,\qquad C_{1,0}=\frac{\nu^0_{k_0}}{2|k_0|},\qquad
C_{0,1}=\frac{k_0}{|k_0|}
,\qquad C_{0,2}=
\frac{|k_0|^2 \Id- k_0 \otimes k_0}{2|k_0|^3}.
\end{align*}
Injecting this expansion into~\eqref{eq:approx-V-last}, we are lead to
\begin{multline*}
R_{\lambda,\e,\pm}^{\ell;t}(x)\,=\,\int_{\R^d}e^{\pm it\sum_{m=0}^{\ell}\sum_{p=0}^P\lambda^{m}\e^{p}k^{\otimes p}\odot C_{m,p}(k_0)}\Big(\hat u^\circ (k)\mp\tfrac{i \hat v^\circ(k)}{|k_0|}\Big)\,e^{ik\cdot x}\,d^*k\\
+O_{k_0,\ell,P,K,u^\circ,v^\circ}\big(\lambda+\e+t(\lambda^{\ell+1}+\e^{P+1})\big),
\end{multline*}
where the approximation holds in $\Ld^2(\R^d)$.
In view of the scaling relation $\e=\lambda^\beta$, we naturally choose $P=\lfloor{\ell}/{\beta}\rfloor$. Injecting the explicit form of $C_{0,0},C_{1,0},C_{0,1}$ and combining with~\eqref{eq:approx-uU-ellfix-wave},
the conclusion follows.\qed


\addtocontents{toc}{\protect\setcounter{tocdepth}{0}}
\section*{Acknowledgements}
\addtocontents{toc}{\protect\setcounter{tocdepth}{1}}
The authors warmly thank
L\'aszl\'o Erd\H{o}s and
Tom Spencer for some stimulating discussions on this problem.
The work of MD was supported by F.R.S.-FNRS and by the CNRS-Momentum program.
Financial support is acknowledged from the European Research Council under the European Community's Seventh Framework Programme (FP7/2014-2019 Grant Agreement QUANTHOM 335410) and European Union’s Horizon 2020 research
and innovation programme under grant agreement No 864066.

\bibliographystyle{plain}

\def\cprime{$'$}\def\cprime{$'$} \def\cprime{$'$}

\end{document}